\newcommand{\acknowledgement}{\subsection*{Acknowledgements}}
\numberwithin{equation}{section} 
\numberwithin{figure}{section} 
\theoremstyle{plain}
\newtheorem{thm}{Theorem}[section]
\newtheorem*{thm*}{Theorem}
\newtheorem{cor}[thm]{Corollary}
\newtheorem*{cor*}{Corollary}
\newtheorem{prop}[thm]{Proposition}
\newtheorem*{prop*}{Proposition}
\newtheorem{lem}[thm]{Lemma}
\newtheorem*{lem*}{Lemma}
\theoremstyle{definition}
\newtheorem{defn}[thm]{Definition}
\newtheorem*{defn*}{Definition}
\newtheorem{example}[thm]{Example}
\newtheorem*{example*}{Example}
\theoremstyle{remark}
\newtheorem{rem}[thm]{Remark}
\newtheorem*{rem*}{Remark}
\theoremstyle{plain}
\newcommand{\Id}{\operatorname{Id}}
\newcommand{\R}{\mathbb{R}}
\newcommand{\Z}{\mathbb{Z}}
\newcommand{\C}{\mathbb{C}}
\newcommand{\e}{\operatorname{e}}
\newcommand{\ceq}{\coloneqq}
\newcommand{\F}{\mathcal{F}}
\newcommand{\defin}[1]{\textbf{#1}}
\newcommand{\supp}{\operatorname{supp}}
\begin{document}

\subjclass{53D35,53D12}

\title[Coisotropic Hofer-Zehnder capacities]{Coisotropic Hofer-Zehnder capacities and
non-squeezing for relative embeddings}

\author{Samuel Lisi}
\thanks{SL: partially 
supported by the ERC Starting Grant of Fr\'ed\'eric Bourgeois StG-239781-ContactMath and by the ERC Starting Grant of Vincent Colin Geodycon.}

\address{Samuel Lisi\\University of Mississippi, P.O. Box 1848, University, MS 38677, USA}
\email{stlisi@olemiss.edu}
\author{Antonio Rieser}
\thanks{AR: supported by the ISF grant 723/10, ERC 2012 Advanced Grant 20120216, Short Visit Grants from the Contact and Symplectic Topology Research Network of the European Science Foundation, and Cat\'{e}dras CONACYT / 1076.}

\address{Antonio Rieser\\CONACYT-CIMAT, Centro de Investigaci\'on en Matem\'aticas, Jalisco s/n, Col. Valenciana, CP 36023 Guanajuato, GTO, Mexico}
\email{antonio.rieser@cimat.mx}


\begin{abstract}
We introduce the notion of a symplectic capacity relative to a co\-isotropic submanifold of a symplectic manifold, and we construct two examples of such capacities through modifications of the Hofer-Zehnder capacity. As a consequence, we obtain a non-squeezing theorem for symplectic embeddings relative to co\-isotropic constraints and existence results for leafwise chords on energy surfaces. 
\end{abstract}

\maketitle

\section{Introduction}

Symplectic capacities are an important tool in the study of symplectic rigidity phenomena. 
The first one was constructed
by Gromov \cite{Gromov_1985}, 
and the notion was subsequently axiomatized by Ekeland and Hofer  \cite{Ekeland_Hofer_1989}.
Many such capacities exist; indeed,
each phenomenon of symplectic rigidity arguably gives rise to its own capacity.
A large number of examples are described in \cite{Cieliebak_Hofer_Latschev_Schlenk_2007}, and relationships between them, notably energy-capacity inequalities, lead to interesting connections between Hamiltonian dynamics and symplectic topology.


Very little is known about capacities defined relative to special
submanifolds $N \hookrightarrow M$ of a symplectic manifold, and
even in the Lagrangian case there are many open questions.  Barraud
and Cornea introduced the first relative capacities for the Lagrangian
case, the Lagrangian Gromov width and relative packing numbers
\cite{Barraud_Cornea_2007}.  Upper bounds for the Lagrangian Gromov
width of the Clifford torus in $\C P^n$ were computed by Biran and
Cornea \cite{Biran_Cornea_2009}, and Buhovsky \cite{Buhovsky_2010}
later computed lower bounds for the Clifford torus. Schlenk's
constructions \cite{Schlenk_2005} also work in the relative case,
and therefore compute the relative packing numbers for $k\leq 6$
balls in $(\C P^{2},\R P^{2})$.  In \cite{Rieser_2014}, the second
author defined a blow-up and blow-down procedure for Lagrangian
submanifolds, and used it to compute the Lagrangian Gromov width
of a class of Lagrangians that are fixed point sets of real, rank-$1$
symplectic manifolds.  In \citelist{\cite{Zehmisch_2012a} \cite{Zehmisch_2012b}},
Zehmisch constructed a capacity of a manifold $(M,\omega)$ from
embeddings of $n$-disk bundles over a Lagrangian submanifold and
related it to the geometry of the Lagrangian. In \cite{Borman_McLean_2014},
Borman and McLean constructed a spectral capacity for wrapped Floer
homology, and used it to study the Lagrangian Gromov width of closed
Lagrangian submanifolds in Liouville manifolds.  Dimitroglou Rizell
\cite{Rizell_2013} gave examples of compact Lagrangians in $\C^3$
with infinite Barraud-Cornea Lagrangian width, building on
\cite{Ekholm_etal_2013}.

In this paper, we study a notion of a capacity relative to a coisotropic
submanifold, which we call a coisotropic capacity. In a heuristic
sense, if a symplectic capacity measures the `width' of a symplectic
manifold, a coisotropic capacity similarly measures the symplectic
`size' of a coisotropic embedding inside an ambient symplectic manifold.
We construct a family of such capacities, analogous to the Hofer-Zehnder
capacity, indexed by a suitable equivalence relation on the coisotropic
submanifolds.

    We recall that a coisotropic submanifold is foliated by the leaves of the
    characteristic foliation.  A Hamiltonian trajectory that starts and ends on
    the same leaf of this foliation is called a leafwise chord. 
    As an application of the capacity we introduce, 
    we obtain existence of leafwise chords for the coisotropic submanifold for
    almost every energy level of an autonomous Hamiltonian, under the
    assumptions of having a finite capacity neighbourhood and transversality of
    the level set to the coisotropic submanifold. (See Theorem \ref{thm:Almost
    existence}.)

    Leafwise chords have been studied extensively in the literature, perhaps
    first appearing in the work of Moser \cite{Moser_1978}. We mention a few
    works that similarly approach this problem from an
    energy--capacity--inequality point of view, notably Hofer \cite{Hofer_1990},
    Ginzburg \cite{Ginzburg_2007}, Dragnev \cite{Dragnev_2008}, Ziltener
    \cite{Ziltener_2010}, G\"urel \cite{Gurel_2010}, Albers and Frauenfelder
    \cite{Albers_Frauenfelder_2010_TandA}, Albers and Momin \cite{Albers_Momin_2010}, 
    Usher \cite{Usher_2011} and Kang \cite{Kang_2013}.
    
    Of particular relevance to us are \cite{Ginzburg_2007}*{Theorem 2.7} and
    \cite{Gurel_2010}*{Theorem 1.1}. 
    These papers show that in the case of 
    coisotropic submanifolds of restricted contact type, 
    there is a lower bound on the
    leafwise displacement energy of the coisotropic submanifold coming from the
    symplectic area of a disk tangent to a leaf of the characteristic foliation.

\begin{defn} \label{def:W} Let
\begin{enumerate}
\item $\R^{n,k} \coloneqq \left\{ x \in \R^{2n} \vert x = (x_1, \dots, x_n, y_1, \dots, y_k, 0, \dots, 0) \right\}.$
\item $W(R) \coloneqq \left \{ (x_1, \dots, x_n, y_1, \dots, y_n) \in \R^{2n} \; | \; x_n^2 + y_n^2  < R^2 \text{ or } y_n < 0 
\right\}$
\item $W^{n,k}(R) \coloneqq  W(R) \cap \R^{n,k}$
\item $B(a,r)$ is the open ball of radius $r$ centered at
\[a \ceq (0,\dots,0,b_n) \in \R^{2n},\]
and $B(r)$ is the open ball of radius $r$ centered at the origin.
\item $B^{n,k}(r) \coloneqq B(r) \cap \R^{n,k}$
\end{enumerate}
\end{defn}

\begin{defn}
\label{def:Basic defs}
Let $(M, \omega)$ be a symplectic manifold and let $N \subset M$ be a
submanifold. Then, $N$ is \emph{coisotropic} if the symplectic orthogonal 
$TN^\omega \subset TN$.

The restriction $\omega|_N$ defines a distribution on $N$, consisting of
the kernel of $\omega|_N$.  By the Frobenius integrability theorem, this
distribution is integrable and integrates to the \textit{characteristic
foliation}.  The leaves of this distribution are the
\textit{isotropic leaves}.
\end{defn}

\begin{example}
\label{ex:Isotropic leaves of Rnk}
Let $\omega_0$ denote the standard symplectic form on $\R^{2n}$, and recall  
that $\R^{n,k}$ is the linear coisotropic subspace of $(\R^{2n},\omega_0)$ consisting
of the first $n+k$ coordinates, i.e. 
\begin{equation*}
\R^{n,k} = \left \lbrace x \in \R^{2n} \; \vert  \; 
x = (x_1,\dots,x_n,y_1,\dots, y_k,0,\dots,0) \right \rbrace.
\end{equation*}
Let $V_0$ be the linear subspace
\begin{equation*}
V_0 = \left \lbrace x \in \R^{2n} \; 
\vert \; x = (0,\dots,0,x_{k+1},\dots,x_n,0,\dots,0) \right \rbrace, \\
\end{equation*}
and note that any leaf $F$ in the characteristic foliation $\mathcal{F}$ of $\R^{n,k}$ has the form $z + V_0$, for some $z \in \R^{n,k}$.
\end{example}

\begin{defn}
A \textit{coisotropic equivalence relation} on $N$ is an equivalence
relation $\sim$ with the property that if $x$, $y$ are in the same
isotropic leaf, then $x \sim y$.

In particular, the \textit{leaf relation}, given by $x \sim y$ if and only if $x,
y$ are in the same isotropic leaf, is the finest coisotropic
equivalence relation. 
The \textit{trivial relation} defined by $x \sim y$ for
every pair $x, y \in N$ is the coarsest coisotropic relation.
\end{defn}

Note that if $N$ is a connected Lagrangian, 
there is only one coisotropic equivalence relation since there is only one leaf.

\begin{defn} \label{def:relative_embedding}
    Let $(M_0, \omega_0)$ and $(M_1, \omega_1)$ be symplectic manifolds and let
    $N_0, N_1$ be coisotropic submanifolds of $M_0, M_1$ respectively. 
    Let $\sim_0$ and $\sim_1$ be coisotropic equivalence relations on $N_0,
    N_1$.

    Then, an embedding $\psi \colon M_0 \to M_1$ is a \textit{relative
    symplectic embedding}, 
    \[
        \psi \colon (M_0, N_0, \omega_0) \hookrightarrow (M_1, N_1,
        \omega_1)
    \]
    if $\psi^*\omega_1 = \omega_0$ and $\psi^{-1}(N_1) = N_0$.

    The embedding $\psi$ \textit{respects the pair of coisotropic equivalence relations $(\sim_0,\sim_1)$} if
    furthermore, for every $x, y \in C$, if $\psi(x) \sim_1 \psi(y)$ then $x \sim_0 y$.

    If $\psi \colon (M_0, N_0, \omega_0) \hookrightarrow (M_1, N_1, \omega_1)$
    is a relative embedding, we define the pull-back relation $\sim_\psi$ on
    $N_0$ by 
    \[
        x \sim_\psi y \Longleftrightarrow \psi(x) \sim_1 \psi(y).
    \]
    Thus, $\psi$ respects the pair $(\sim_0, \sim_1)$
    if $\sim_0$ is a coarser relation than the pull-back $\sim_\psi$. 

\end{defn}
    In particular, if $N_0, N_1$ are Lagrangian, this recovers the definition of
    a relative symplectic embedding, first introduced (without the terminology) in Barraud-Cornea \cite{Barraud_Cornea_2007}, Section 1.3.3, and formally defined in Biran-Cornea \cite{biran_cornea_2008}, Section 6.2.
Observe also that $\psi$ respects the relations $\sim_\psi$ and $\sim_1$ by
construction of the pull-back. 
If $\sim$, $\approx$ are two equivalence relations on the coistropic submanifold
$N$, the identity on $(M, N, \omega)$ respects $\sim, \approx$ if and only if
$\sim$ is coarser than $\approx$.
\begin{example} \label{ex:construction_from_ambient}
    The first class of non-trivial examples comes from considering a properly
    embedded coisotropic submanifold $C$ in an ambient symplectic manifold, say
    $\R^{2n}$. We now consider all pairs $(U, N)$ so that there exists an
    embedding $\psi \colon U \to \R^{2n}$ for which $\psi(N) = C \cap \psi(U)$.
    Then, we may take the coisotropic equivalence relation on $N$ to be the
    pull-back of the leaf relation on $C$ by $\psi$. 
    Described more concretely, we say $x \sim y$ for $x, y \in N$ if $\psi(x)$
    and $\psi(y)$ are in the same leaf of $C$. 

\end{example}

\begin{defn}
A \emph{coisotropic capacity} is a map $(M,N,\omega, \sim) \mapsto 
c(M,N,\omega, \sim)$ which
associates to a tuple $(M,N,\omega, \sim)$ consisting of a symplectic manifold 
$(M,\omega)$, a coisotropic submanifold $N^{n+k}\hookrightarrow M$, $k<n$, and 
a coisotropic equivalence relation $\sim$, a non-negative number or infinity 
and satisfies the
following axioms:

\begin{enumerate}
\item \label{Axiom:Monotonicity}Monotonicity. If there exists a relative
symplectic embedding, respecting the coisotropic equivalence relations $\sim_0,
\sim_1$ on $N_0, N_1$
\begin{equation*}
\phi \colon (M_0,N_0,\omega_0)\hookrightarrow (M_1,N_1,\omega_1)
\end{equation*} for $M_0$ and $M_1$ of the same dimension, 
then \[c(M_0,N_0,\omega_0, \sim_0) \leq c(M_1,N_1,\omega_1, \sim_1).\]
\item \label{Axiom:Conformality}Conformality.  For fixed $(M, N, \omega, \sim)$, 
$$c(M,N,\alpha\omega, \sim)=|\alpha|c(M,N,\omega, \sim), \alpha \in \mathbb{R}\backslash \{0\}.$$

\item \label{Axiom:Nontriviality}Non-triviality. With $\sim$ denoting the leaf
    relation (see Definition \ref{def:Basic defs}), 
    $c\left(B(1),B^{n,k}(1),\omega_0, \sim \right) =
    c\left(W(1),W^{n,k}(1),\omega_0, \sim \right) = \pi/2$, where $W(1)$, $W^{n,k}(1)$
    are as in Definition \ref{def:W}.

\end{enumerate}

In general, a coisotropic capacity may not be defined for all possible tuples,
but only for a distinguished class. In particular, most of our examples will be
of this nature.
\end{defn}

\begin{rem}
When the symplectic form and the equivalence relation $\sim$ in $(M,N,\omega)$
are understood, we will abbreviate this to $(M,N)$. 
\end{rem}

\begin{rem}
The non-triviality axiom is subtly different from the one required for a symplectic capacity (as in 
\cite{Hofer_Zehnder_1994}). 
Let $Z(1) = B^2(1) \times \C^{n-1}$ be the standard symplectic cylinder, and let  $Z^{n,k}(1) \coloneqq Z(1) 
\cap \R^{n,k}$. For a symplectic capacity $c_0$, the non-triviality axiom is $c_0(B(1)) = c_0(Z(1) ) = \pi$,
and rules out the volume. The non-triviality axiom for a coisotropic capacity 
serves to rule out taking a standard symplectic capacity: 
for any standard symplectic capacity $c_0$, $c_{0}( W(1) )$ is infinite.
If we replaced this axiom with a weaker one, for instance requiring instead
\[
c( Z(1), Z^{n,k}(1)) = \frac{\pi}{2},
\]
we would be able to take a standard symplectic capacity $c_0(M, \omega)$ and define 
$c(M, N, \omega) = \frac{1}{2} c(M, \omega)$. 

Observe also that by considering embeddings of the form $\Id \times \psi$ where $\psi \colon \R^2 \to \R^2$ is symplectic,
we may construct an embedding of $Z(1)$ to $W(1)$ so that $Z^{n,k}(1)$ is mapped to $W^{n,k}(1)$, and thus
the weaker condition is implied by the stronger one.

The point of the non-triviality condition \ref{Axiom:Nontriviality} is thus to
rule out the trivial examples of rescaled symplectic capacities, but also
implies the weaker (perhaps more natural seeming) non-triviality condition.

\end{rem}

The coisotropic capacities that we will introduce are constructed similarly to the Hofer-Zehnder capacity, 
and depend on several classes of Hamiltonians that we define below.

\begin{defn} \label{def:simple}
An autonomous Hamiltonian  $H \colon M \to \R$ is \it{simple} if 
\begin{enumerate}
\item There exists a compact set $K\subset M$ (depending on $H$) and a constant $m(H)$ such that
$K\subset M \setminus \partial M$, $\emptyset \neq K\cap N \subsetneq N$,
and
\begin{equation*}
H(M \setminus K)=m(H).
\end{equation*}
\item There exists an open set $U\subset M$ (depending on $H$), with $\emptyset
\neq U\cap N \subsetneq N$, and on which $H(U)\equiv 0$. 
\item $0 \leq H(x) \leq m(H)$ for all $x\in M$.
\end{enumerate}

Denote the set of simple Hamiltonians by $\mathcal{H}(M,N)$.
\end{defn}

We now define a \textit{return time} relative to a coisotropic equivalence relation.

\begin{defn}
    \label{def:coisotropic_relation}
Let $(M,\omega)$ be a symplectic manifold, let $N\hookrightarrow M$ be a coisotropic
submanifold and let $\sim$ be a coisotropic equivalence relation on $N$. 
Let $X_H$ denote 
the Hamiltonian vector field of the function $H \colon M\to \R$. 
Suppose $\gamma(t)$ is a solution to $\dot{\gamma}=X_{H}(\gamma)$,
with $\gamma(0) \in N$.

The \emph{return time} of the orbit $\gamma$, relative to $N$ and $\sim$, is defined by
\begin{equation*}\label{E:returnToLeaf}
T_{\gamma} = \inf \{t \,| \,t > 0, \gamma(t) \in N \text{ with } \gamma(0) \sim \gamma(t). \}
\end{equation*}

We define the infimum of the empty set to be $+\infty$.

Notice that if $\sim$ is the trivial equivalence relation, this is a return time
to the submanifold $N$ itself. If $\sim$ is the leaf relation, this measures the
shortest non-trivial leafwise chord.
\end{defn}

We now define admissibility for a simple Hamiltonian,
and use this to define a Hofer-Zehnder-type capacity.
We will find this particularly useful in the case of coisotropic submanifolds
equipped with equivalence relations induced from an ambient coisotropic
submanifold, as in Example \ref{ex:construction_from_ambient}.

\begin{defn} Fix $(M, N, \omega, \sim)$. 
A function $H\in \mathcal{H}(M, N)$ will be called 
\emph{admissible} for the coisotropic equivalence relation $\sim$, 
if all of the solutions of $\dot{\gamma}=X_{H}(\gamma),
\gamma(0)\in N$ are either such that 
\begin{inparaenum}[(i)]
\item $\gamma(t)$ is constant for all $t\in \mathbb{R}$, or 
\item $T_\gamma > 1$, i.e. that the return time of the orbit $\gamma$ relative to $(N,\sim)$ is greater than $1$.
\end{inparaenum} 
We denote the collection of all admissible functions by
$\mathcal{H}_a(M,N,\omega, \sim)$. 
\end{defn}

We now define the map
\begin{defn}
$c(M,N,\omega, \sim)=\sup \{m(H)\,|\,H\in \mathcal{H}_a(M,N,\omega, \sim)\}$.
\end{defn}

Our main theorem is then:
\begin{thm}
The map $c$ is a coisotropic capacity. 
\label{thm:Capacity}
\end{thm}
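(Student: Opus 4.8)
The plan is to verify the three axioms of a coisotropic capacity for the map $c(M,N,\omega) = \sup\{m(H) \mid H \in \mathcal H_a(M,N,\omega)\}$, treating monotonicity and conformality as essentially formal, and concentrating the real work on nontriviality.

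First I would establish \emph{monotonicity}. Given a relative embedding $\phi\colon (M_1,N_1,\omega_1) \hookrightarrow (M_2,N_2,\omega_2)$ and an admissible simple Hamiltonian $H_1 \in \mathcal H_a(M_1,N_1,\omega_1)$, I would push it forward: define $H_2$ on $\phi(M_1)$ by $H_2 = H_1 \circ \phi^{-1}$ and extend it to all of $M_2$ by the constant $m(H_1)$ outside $\phi(K_1)$, where $K_1$ is the compact set from the definition of "simple". One must check that $H_2$ is simple: the compact set $\phi(K_1)$ lies in $M_2 \setminus \partial M_2$ since $\phi$ is an embedding into the interior away from the boundary of $N_2$ (using properness and $\phi^{-1}(N_2)=N_1$), the conditions $\emptyset \ne \phi(K_1)\cap N_2 \subsetneq N_2$ follow from $\phi^{-1}(N_2) = N_1$ together with $\emptyset \ne K_1 \cap N_1 \subsetneq N_1$, and the open set condition transfers similarly. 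Then since $\phi$ is symplectic, $X_{H_2} = \phi_* X_{H_1}$ on $\phi(M_1)$ and $X_{H_2} = 0$ elsewhere, so orbits through $N_2$ either lie in $\phi(M_1)$ and correspond under $\phi$ to orbits through $N_1$ — hence are constant or have return time $> 1$, using that $\phi$ maps the characteristic foliation $\mathcal F_1$ of $N_1$ to that of $N_2$ since $\phi^{-1}(N_2)=N_1$ and $\phi$ is symplectic — or lie in the region where $X_{H_2}=0$ and are constant. Thus $H_2 \in \mathcal H_a(M_2,N_2,\omega_2)$ with $m(H_2) = m(H_1)$, giving $c(M_1,N_1,\omega_1) \le c(M_2,N_2,\omega_2)$.

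Next, \emph{conformality} is a direct rescaling: for $\alpha \ne 0$, the map $H \mapsto |\alpha| H$ is a bijection from $\mathcal H_a(M,N,\omega)$ to $\mathcal H_a(M,N,\alpha\omega)$, because the Hamiltonian vector field satisfies $\iota_{X_H^{\alpha\omega}}(\alpha\omega) = dH$, i.e. $X_H^{\alpha\omega} = \tfrac{1}{\alpha} X_H^{\omega}$, so $X_{|\alpha|H}^{\alpha\omega} = \tfrac{|\alpha|}{\alpha} X_H^{\omega} = \pm X_H^{\omega}$; reversing time if $\alpha<0$ preserves the set of return times, and the characteristic foliation of $N$ depends only on $N$ and the conformal class, so admissibility is preserved. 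Since $m(|\alpha|H) = |\alpha| m(H)$, the supremum scales by $|\alpha|$. The case $\alpha = 0$ is trivial.

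The main obstacle is \emph{nontriviality}, which requires two inequalities for each of the pair $(B(1),B^{n,k}(1))$ and the pair $(U(\pi/2),U^{n,k}(\pi/2))$ (the strict version), namely that $c = \pi/2$ in each case; together with the embedding $Z(1) \hookrightarrow U(\pi/2)$ already noted and monotonicity, these yield $c(Z(1),Z^{n,k}(1)) = \pi/2$ as well, provided one also checks $c(B(1),B^{n,k}(1)) \le c(Z(1),Z^{n,k}(1))$ via the inclusion $B(1)\subset Z(1)$ restricting correctly to the coisotropic pieces. For the \emph{lower bound} $c \ge \pi/2$ I would, as in the classical Hofer--Zehnder construction, exhibit for each $A < \pi/2$ an admissible simple Hamiltonian with $m(H)$ close to $A$: take $H$ depending only on the $(x_n,y_n)$-coordinates (so that its dynamics is rotation in that plane and leaves $N = \R^{n,k}$ invariant in the appropriate relative sense), a radial bump function on $B(1)$ (or on the disk part of $U(\pi/2)$) that is $0$ near the center, rises to $m(H)$ near the boundary with slope controlling the rotation period to be $>1$, i.e. steepness bounded so all nonconstant orbits have period exceeding $1$; the key point is that $m(H)$ can be taken arbitrarily close to $\pi/2$ precisely because the enclosed area is $\pi/2$ times the relevant factor, and one must verify the "simple" conditions $\emptyset \ne K\cap N \subsetneq N$ and $\emptyset\ne U\cap N\subsetneq N$ hold for this $H$, which uses $k \le n-1$ so that $\R^{n,k}$ genuinely cuts through the $(x_n,y_n)$-plane. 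The \emph{upper bound} $c \le \pi/2$ is the deepest part and is exactly the content of the variational argument developed in the rest of the paper (the modified Hofer--Zehnder existence mechanism producing a fast periodic orbit with return time $\le 1$ whenever $m(H) > \pi/2$), so here I would simply invoke Theorem~\ref{thm:Relative non-squeezing} or the forthcoming existence theorem: if some admissible $H$ had $m(H) > \pi/2$, the relative capacity would obstruct the identity embedding of $B(1)$ into $U(A)$ for $A$ slightly above $\pi/2$, contradicting non-squeezing. Assembling these, $c$ satisfies all three axioms, proving the theorem.
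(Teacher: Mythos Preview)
Your overall architecture---prove monotonicity and conformality directly, then split nontriviality into a lower bound by explicit construction and an upper bound by the variational existence result---matches the paper exactly, and your treatments of monotonicity and conformality are essentially those of Lemmas~\ref{lem:monotonicity} and~\ref{lem:conformality}.

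There is, however, a genuine gap in your lower-bound construction. A Hamiltonian on $B(1)$ depending only on the $(x_n,y_n)$ coordinates cannot be \emph{simple}: the definition requires a compact set $K \subset B(1)\setminus \partial B(1)$ outside of which $H \equiv m(H)$, but if $H = h(x_n,y_n)$ then along any ray in the other $2n-2$ directions $H$ stays at whatever value $h$ takes, so it will not equal $m(H)$ near all of $\partial B(1)$. The paper's Proposition~\ref{prop:Lower bound} instead uses the fully radial $H(x) = f(|x|^2)$, whose flow rotates simultaneously in \emph{every} symplectic coordinate plane. One must then analyze the leaf-return condition plane by plane: for $1\le i\le k$ the return requires $bT\in 2\pi\Z$, for $k+1\le i\le n-1$ it requires $bT\in\pi\Z$, and the $n$-th plane gives the sharpest constraint. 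The upshot is the slope bound $f'(t) < \pi/2$ (in the centered case $a=0$), which integrates to $m(H)$ arbitrarily close to $\pi/2$. Your sketch does not engage with this multi-plane return-time analysis, and the construction you describe would not produce an admissible simple Hamiltonian.

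A smaller point: invoking Theorem~\ref{thm:Relative non-squeezing} to obtain the upper bound is circular, since non-squeezing is derived \emph{from} the capacity axioms once they are established. The correct forward reference is to Proposition~\ref{prop:Upper Bound} (equivalently Theorem~\ref{thm:Upper bound}), which directly shows that any simple $H$ on $(U,U^{n,k})$ with $m(H)>\pi/2$ admits a chord of return time $1$, hence is not admissible. You do mention ``the forthcoming existence theorem'' as an alternative, which is the right thing; just drop the non-squeezing route entirely.
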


An application of this theorem together with a computation of capacities 
is the following non-squeezing result for coisotropic 
balls and cylinders which is the natural analogue of the Gromov non-squeezing theorem \cite{Gromov_1985}. To the best of our knowledge, this gives the first relative embedding obstruction with coisotropic constraints which are not Lagrangian.

\begin{figure} 
\includegraphics{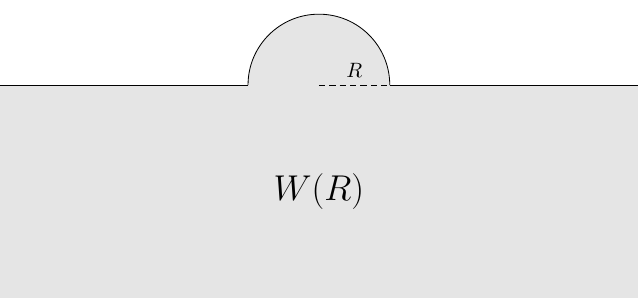}
\caption{The region $W(R)$.}\label{Fig:v_region}
\label{fig:W_R}
\end{figure}

\begin{cor} \label{Cor:non-squeezing}
    Let $B(a,1) \subset \R^{2n}$ be the (open) ball of radius $1$
    centered at $a = (0, \dots, 0, -|a|)$, let $r$ satisfy $|a|^2+r^2 =1$ (so that, in particular, $B^{n,k}(r) = B(a,1) \cap \R^{n,k}$), and suppose that $k < n$.
    
    There exists a relative symplectic embedding 
\[
\phi \colon (B(a,1),B^{n,k}(r),\omega_0) \hookrightarrow (W(R), W^{n,k}(R), \omega_0),
\]
such that any two distinct isotropic leaves of $B^{n,k}(r)$ are mapped to
distinct leaves of $W^{n,k}(R)$
if and only if 
\[
    \arcsin(r) - r(1-r^2)^{1/2} \le \frac{\pi}{2} R^2.
\]

\end{cor}

\begin{rem} \label{rem:area of region bounded by chord}
    The significance of this lower bound becomes clear in the 2-dimensional
    case.
    Consider the disk $B(a,1) \subset \R^2$ 
    of radius $1$ centered at $a$, then $B^{1,0}(r)$
    is (the interior of) a chord of the circle $\partial B(a, 1)$.  This chord
    cuts the disk into two regions.

    The quantity
    \[
    \arcsin(r) - r(1-r^2)^{1/2} 
    \]
    is the area of the smaller of the two regions.

    In this two dimensional case, $W(R)$ is precisely the region shown in Figure
    \ref{fig:W_R}, and $\R^{1,0} = \R$ cuts the region into the lower half-plane
    and an open half-disk of radius $R$. This half-disk has area $\pi R^2/2$.
    This obstruction is therefore obvious in dimension 2. 
    
    Thus, the content of this corollary is that this \textit{a priori} two
    dimensional area obstruction continues to hold in higher dimensions. 
The dynamical origins of the left side of the inequality may be observed in Proposition \ref{prop:Lower bound} and its proof.

    \begin{figure}
        \includegraphics{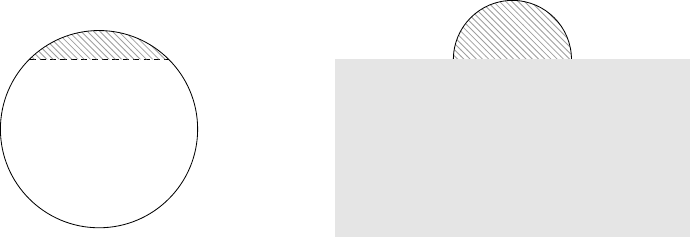}
        \caption{The 2-dimensional case. The hashed area on the left must be
            less than the hashed area on the right for the embedding to exist. 
        } 
    \end{figure}

\end{rem}

Observe that $\R^{2n-2} \times W(1)$ has infinite Gromov width, so this embedding is only obstructed by the coisotropic constraint. 

Several additional applications also follow given the existence of the coisotropic capacity $c$, again using techniques from \cite{Hofer_Zehnder_1994}. 
We give a few of these applications to the existence of chords in Section \ref{sec:Almost existence}. 
In particular, we have:
\begin{restatable*}{thm}{ThmAlmostExistence}

\label{thm:Almost existence}
Let $(M,\omega)$ be a symplectic manifold. Let $S \hookrightarrow M$ be a
compact, regular energy surface for the Hamiltonian $H$. Without loss of
generality, $S = H^{-1}(1)$. 
Let $N \hookrightarrow M$ be an $(n+k)$-dimensional
coisotropic submanifold transverse to $S$, and let $\sim$ be the leafwise
relation on $N$. 

Suppose there is a neighbourhood $U$
of $S$ such that ${c}(U,N,\omega, \sim) < \infty$. 

Then there is a $\rho > 0$ and a dense subset $\Sigma \subset [1-\rho,1+\rho]$ 
such that $X_H$ admits a leafwise chord on every energy surface of $H$ with 
energy in $\Sigma$.

\end{restatable*}

\section{Capacities relative to coisotropic submanifolds}

We now begin the proof of Theorem \ref{thm:Capacity}, giving the monotonicity and conformality axioms, as well as a lower bound. We follow the construction of the Hofer-Zehnder capacity from \cite{Hofer_Zehnder_1994}. 
We also provide a proof of Corollary \ref{Cor:non-squeezing}.

Let $(M,N)$ be a pair consisting of a symplectic
manifold $(M,\omega)$ and a properly embedded coisotropic submanifold $N\hookrightarrow M$, i.e. with $\partial
N \subset \partial M$ (or $\partial N = \emptyset$).  All of
our symplectic manifolds will be assumed to be of the same dimension $2n$. 

We begin with a few definitions.

\begin{defn}
\label{defn:Coisotropic objects}
Recall that \[\R^{n,k} \ceq \{x \in \R^{2n}| x = (x_1,\dots,x_n,y_{1}, \dots, y_{k}, 0,\dots,0) \}\]
is an $(n+k)$-dimensional coisotropic linear subspace of $\R^{2n}$, and that $B(a,r)$ is the $2n$ dimensional symplectic ball of radius $r$ centered at 
\[a \ceq (0,\dots,0,b_n) \in \R^{2n},\] 
and $B^{n,k}(r) \subset \R^{n,k}$ as the coisotropic ball of radius $r$ centered at $0 \in \R^{n,k}$:
\[
B^{n,k}(r) \ceq \{ x\in \R^{n,k}\, \vert \, |x| \,\leq r \}. 
\]
\begin{figure}
\includegraphics{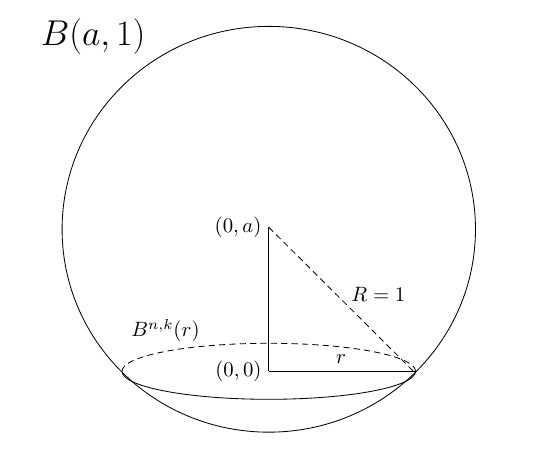}
\caption{A schematic image of the embedding of $B^{n,k}(r)$ into $B(a,1)$.} 
\label{fig:standard balls}
\end{figure}
Recall that we likewise denote by
\[Z(a,r) \ceq \{ x\in \R^{2n} \, \vert \, x_n^2 + (y_n - b_n)^2 \leq r^2\}
\]
the symplectic cylinder centered at $a\in \R^{2n}$, and by $Z^{n,k}(r)$ the coisotropic cylinder 
\[ 
Z^{n,k}(r) \ceq \{ x \in \R^{n,k} \, \vert \, |x_n| \leq r\}
\]
\end{defn}
\begin{rem}
Note that $B^{n,k}(r)$ and $Z^{n,k}(r)$ are properly embedded in $B(a,1)$, $Z(a,1)$, respectively, when $a = (0,\dots,0,b_n)$ with $|b_n| < 1$, and $r^2 = 1 - |a|^2$.
See Figure \ref{fig:standard balls}.
\end{rem}

We will now study the properties of the map $c(M, N, \omega, \sim)$. 
We will show that this satisfies the axioms for coisotropic capacities.

We prove the monotonicity and conformality properties below, which proceed as in \cite{Hofer_Zehnder_1994}. The proof of 
non-triviality will be completed in Section \ref{sec:Capacity proof}.

\begin{lem}The map $c$ satisfies the monotonicity axiom.
\label{lem:monotonicity}
\end{lem}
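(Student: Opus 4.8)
The plan is to mimic the standard Hofer--Zehnder argument, adapting it to the relative setting. Suppose $\phi\colon(M_1,N_1,\omega_1)\hookrightarrow(M_2,N_2,\omega_2)$ is a relative symplectic embedding. The goal is to show $c(M_1,N_1,\omega_1)\le c(M_2,N_2,\omega_2)$, and for this it suffices to take any admissible simple Hamiltonian $H_1\in\mathcal H_a(M_1,N_1,\omega_1)$ and produce from it an admissible simple Hamiltonian $H_2\in\mathcal H_a(M_2,N_2,\omega_2)$ with $m(H_2)=m(H_1)$, since then $m(H_1)\le c(M_2,N_2,\omega_2)$ and taking the supremum over $H_1$ gives the claim.

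First I would push $H_1$ forward via $\phi$: on $\phi(M_1)\subset M_2$ define $\tilde H=H_1\circ\phi^{-1}$. Let $K_1\subset M_1\setminus\partial M_1$ be the compact set outside of which $H_1$ is constant equal to $m(H_1)$, and let $U_1$ be the open set on which $H_1\equiv 0$; both meet $N_1$ nontrivially and properly. Then $K_2\ceq\phi(K_1)$ is compact in $M_2\setminus\partial M_2$ (using that $\phi$ is an embedding and $K_1$ avoids the boundary), and $U_2\ceq\phi(U_1)$ is open. Since $\phi^{-1}(N_2)=N_1$, we have $\phi(N_1)=N_2\cap\phi(M_1)$, so $K_2\cap N_2=\phi(K_1\cap N_1)$ and $U_2\cap N_2=\phi(U_1\cap N_1)$ are again nonempty proper subsets of $N_2$. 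Now extend $\tilde H$ to all of $M_2$ by setting $H_2\equiv m(H_1)$ on $M_2\setminus K_2$; this is well-defined and smooth because $\tilde H\equiv m(H_1)$ near $\partial K_2$ inside $\phi(M_1)$ (as $K_1$ is the set outside of which $H_1$ is constant), and it satisfies $0\le H_2\le m(H_1)=m(H_2)$ as well as $H_2\equiv 0$ on $U_2$. Hence $H_2\in\mathcal H(M_2,N_2)$.

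Next I would check admissibility of $H_2$. Because $\phi$ is symplectic, it conjugates the Hamiltonian flow of $\tilde H$ on $\phi(M_1)$ to that of $H_1$ on $M_1$; moreover $\phi$ maps the characteristic foliation $\mathcal F_1$ of $N_1$ to the characteristic foliation $\mathcal F_2$ of $N_2$ restricted to $\phi(M_1)$, since the characteristic distribution $TN\cap(TN)^{\omega}$ is determined by $\omega$ and is preserved by symplectomorphisms. Any orbit $\gamma$ of $X_{H_2}$ with $\gamma(0)\in N_2$ either starts in $M_2\setminus K_2$, where $X_{H_2}=0$ and so $\gamma$ is constant, or starts in $K_2\subset\phi(M_1)$; in the latter case $\gamma$ stays in a neighbourhood where $H_2=\tilde H$ for as long as it remains in $\phi(M_1)$, so near $t=0$ it is the $\phi$-image of an orbit of $X_{H_1}$ through a point of $N_1$. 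That orbit is either constant or has return time $>1$ with respect to $\mathcal F_1$, and since $\phi$ carries leaves of $\mathcal F_1$ to leaves of $\mathcal F_2$, the return time is unchanged; one should note that the relevant orbit segment stays in $\phi(M_1)$ because it stays in $K_2$ (its level set is compact), so the comparison is valid on $[0,1]$. Thus every such orbit is constant or has return time $>1$, so $H_2\in\mathcal H_a(M_2,N_2,\omega_2)$.

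The main obstacle, and the point requiring care, is the smoothness and well-definedness of the extension across $\partial K_2$ together with the bookkeeping that all the ``$\cap N$ nonempty and proper'' conditions survive; in particular one must use properness of the embedding ($\phi^{-1}(N_2)=N_1$, and $\partial N_i\subset\partial M_i$) to guarantee that $K_2$ does not accidentally meet $\partial M_2$ and that $\phi(M_1)$ contains a full neighbourhood of $K_2$ in $M_2$, so that $H_2$ is genuinely smooth and the flow comparison above is meaningful on a time interval of length $1$. Once these points are verified, monotonicity follows by taking suprema. \qedhere
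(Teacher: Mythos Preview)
Your argument is correct and follows essentially the same route as the paper: push the simple Hamiltonian forward by $\phi$, extend by the constant $m(H)$ outside $\phi(M_1)$, and use that $\phi$ conjugates the Hamiltonian flows and carries leaves of $\mathcal F_1$ to leaves of $\mathcal F_2$ to transfer admissibility. If anything, you are more explicit than the paper about the foliation-preservation and about why the relevant orbit stays inside $\phi(M_1)$; note that the cleanest justification for the latter is simply that $X_{H_2}$ vanishes outside $K_2$, so any nonconstant orbit is confined to $K_2\subset\phi(M_1)$.
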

\begin{proof}
Let $\phi \colon (M_1,N_1,\omega_1, \sim_1)\to (M_2,N_2,\omega_2, \sim_2)$ 
be a relative embedding, respecting the coisotropic equivalence relations, as in
Definition \ref{def:relative_embedding}. 
Define the map $\phi_* \colon \mathcal{H}(M_1,N_1)\to
\mathcal{H}(M_2,N_2)$
by
\begin{equation*}
\phi_*(H) = \begin{cases} H \circ \phi^{-1} & x\in \phi(M_1) \\
m(H) & x\notin \phi(M_1)
\end{cases}
\end{equation*}
Note that if $A\subset M_1 \setminus \partial M_1$ is a non-empty compact set 
and $\emptyset \subsetneq A\cap N_1 \subsetneq N_1$, then
$\phi(A)\subset M_2 \backslash \partial M_2$ and $\emptyset \subsetneq \phi(A)
\cap N_2 \subsetneq N_2$. By construction, $m(H)=m(\phi_*(H))$. If $U\subset
M_1$ is an open set on which $H=0$, then, since $\phi$ is an embedding $\phi(U)$
is an open set on which $\phi_*(H)=0$. Also, by construction, $0\leq \phi_*(H)
\leq m(\phi_*(H))$, and therefore $\phi_*(H)\in \mathcal{H}(M_2,N_2,\omega_2)$. 

We now check that $\phi_*(\mathcal{H}_a(M_1,N_1,\omega_1, \sim_1))\subset
\mathcal{H}_a(M_2,N_2,\omega_2, \sim_2)$. 
Let $H \colon M_1 \to \R$ be an admissible simple Hamiltonian.
Since $\phi$ is symplectic,
$\phi_*(X_H)=X_{\phi_*(H)}$. 
Furthermore, the Hamiltonian vector field $X_{\phi_*(H)}$ vanishes outside the
image of $\phi$. 
Thus, all non-constant trajectories of $X_{\phi_*(H)}$ are conjugate to
non-constant trajectories of $X_H$. 
In particular then, 
if $y(t)$ is a non-constant trajectory of $X_{\phi_*(H)}$ with 
$y(0) \in N_2, y(T) \in N_2$ with $y(0) \sim_2 y(T)$, then $y(t)$ must be in the
image of $\phi$ and thus there exists a
trajectory $x(t)$ of $X_H$ so that $\phi(x(t)) = y(t)$.

Since $\phi$ is a relative embedding with $\phi^{-1}(N_2) = N_1$, we have that
$x(0), x(T) \in N_1$. 
Since the relative embedding $\phi$ respects the coisotropic equivalence
relations, if $y(0) \sim_2 y(T)$ then it must hold that $x(0) \sim_1 x(T)$.
As $H \in \mathcal{H}_a(M_1, N_1, \omega_1, \sim_1)$, it follows that $T > 1$.
Hence, it follows that $\phi_*H \in \mathcal{H}_a(M_2, N_2, \omega_2, \sim_2)$,
as desired.

\end{proof}

We now give a slight extension of the above monotonicity,
which will be of use to us in the proof of Theorem \ref{thm:Upper bound}.
\begin{lem} \label{lem:referee request}
    Let $(M, \omega)$ and $(M', \omega')$ be symplectic manifolds, let $N
    \subset M$ and $N' \subset M'$ be coisotropic submanifolds equipped with
    coisotropic equivalence relations $\sim$ and  $\sim'$.

    Suppose that for every compact set $K \subset M$, there exists
    an open neighbourhood $U \supset K$ and a relative 
    symplectic embedding $\psi \colon (U, N \cap U, \omega|_U)  \to (M, N, \omega)$ that respects the pair of coisotropic equivalence relations $(\sim, \sim')$.

    Then, $c(M, N, \omega, \sim) \le c(M', N', \omega', \sim')$.
\end{lem}
\begin{proof}
    Let $H \colon M \to \R$ be a Hamiltonian with $0 \le H \le m(H)$ and that $m(H) - H$ 
    is compactly supported in $M$. Then, by hypothesis, there exists a
    neighbourhood $U$ of the support of $m(H) - H$ and a symplectic embedding
    $\psi \colon U \to M'$. Let $G = H \circ \psi^{-1}$ defined on $\psi(U)$ and
    then extend $G$ to all of $M'$ by setting $G(x) = m(H)$ for all $x \notin
    \psi(U)$. 

    Proceeding as in Lemma \ref{lem:monotonicity}, it follows that $G$ is simple
    if and only if $H$ is simple, with $m(G) = m(H)$. Furthermore, $X_{G} =
    \psi_*X_H$ on $\psi(U)$ and vanishes outside $\psi(U)$. Thus, arguing as in
    Lemma \ref{lem:monotonicity}, $G$ is admissible if and only if $H$ is.

    Thus for any $H \in \mathcal{H}_a(M,N,\omega, \sim)$, there exists $G \in 
\mathcal{H}_a(M',N',\omega', \sim')$ such that 
    $m(H) = m(G)$. The desired inequality now follows immediately.
\end{proof}

\begin{rem} \label{rem:leaf_relation_monotonicity}
The monotonicity of the capacity depends in an essential way on the 
coisotropic equivalence relation. 
Indeed, if no condition is put on the relative embedding 
$\phi \colon (M_1, N_1, \omega_1) \to (M_2, N_2, \omega_2)$,
it is easy to imagine a situation in which two or more leaves on $N_1$ are
mapped to the same leaf in $N_2$. 
For instance, there are many examples of compact hypersurfaces in $\R^{2n}$
for which there is a dense leaf in the characteristic foliation --- 
in this case, this is about dense orbits in a Hamiltonian system with compact
energy level. 
One possible construction is originally due to Katok \cite{Katok_1973},
as is used in \cite{Casals_Spacil_2016}.
In particular, Katok's construction can be done as a small, locally supported
perturbation of the unit sphere in $\R^{2n}$. 
This construction of Katok's also shows that the existence of leafwise chords
must see the entirety of the coisotropic submanifold.

As discussed in Example \ref{ex:construction_from_ambient}, 
a natural class to consider is to consider a fixed (compact) coisotropic
submanifold $\hat N$ in $\R^{2n}$. We then consider only symplectic manifolds
that are open subsets $U \subset \R^{2n}$ and coisotropic submanifolds $N = \hat
N \cap U$. The coisotropic equivalence relation is that $x \sim y$ if and only
if $x, y$ are in the same isotropic leaf on $\hat N$.
Then, all of the inclusion maps respect the equivalence relation, by
construction.

A very simple example of this phenomenon occurs even with Lagrangians.
Let $(\hat M, \hat N)$ be the pair consisting 
of the unit disk in $\R^2$ and the $x$-axis.
Let $M$ be an open annulus centred at the origin. Then, $N = \hat
N \cap M$ is the disjoint union of two line segments. 

In $M$, each line segment is its own leaf: an admissible Hamiltonian for the
leafwise relation, just considered relative to $N$, would allow for a finger move that
pushed centre of the segments almost all the way around the annulus.

Notice however that the inclusion of $(M, N) \hookrightarrow (\hat M, \hat N)$
does not respect the leafwise relation! The two leaves of $\hat N$ are both
mapped to the unique leaf of $N$. 
Relative to the equivalence relation induced from the leafwise relation on $\hat N$, however, 
these chords from one segment to the other would be eliminated.  

We thank Kaoru Ono and Yoshihiro Sugimoto for pointing out that 
our original version of this capacity $c$ overlooked this point and 
implicitly required the embeddings to
respect the leaf relation.
\end{rem}

\begin{lem} \label{lem:conformality}
$c$ satisfies the conformality axiom.
\end{lem}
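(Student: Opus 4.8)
The plan is to reduce the statement to two facts. First, the characteristic foliation $\F$ of $N$ — and hence the notion of return time of an orbit — is determined by the symplectic orthogonal complement $(TN)^{\omega}$, which is unchanged if $\omega$ is replaced by $\alpha\omega$ for any $\alpha\neq 0$. Second, the Hamiltonian flow transforms in an elementary way under rescaling of the symplectic form. Since $0\cdot\omega$ is not symplectic, we only need to treat $\alpha\neq 0$.

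Consider first the case $\alpha>0$. From the equivalence of $\iota_X(\alpha\omega)=d(\alpha H)$ with $\iota_X\omega=dH$ we get $X_{\alpha H}^{\alpha\omega}=X_H^{\omega}$, i.e. the Hamiltonian flow of $\alpha H$ with respect to $\alpha\omega$ is literally the Hamiltonian flow of $H$ with respect to $\omega$. For $\alpha>0$, multiplication by $\alpha$ is a bijection of $\mathcal{H}(M,N)$ onto itself (each of the three defining conditions for a simple Hamiltonian is preserved, and $m(\alpha H)=\alpha\, m(H)$), and by the two facts above it carries admissible Hamiltonians for $\omega$ precisely to admissible Hamiltonians for $\alpha\omega$. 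Taking suprema of $m(\cdot)$ over these sets yields $c(M,N,\alpha\omega)=\alpha\, c(M,N,\omega)$.

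Next the case $\alpha=-1$, which I expect to be the delicate point — note that multiplication by a negative constant does not preserve $\mathcal{H}(M,N)$, because of the condition $H\ge 0$, so this case genuinely needs a separate argument. Here $X_H^{-\omega}=-X_H^{\omega}$, so the flow of $H$ with respect to $-\omega$ is the time-reversal of its flow with respect to $\omega$, while $\F$ is unchanged. I claim $\mathcal{H}_a(M,N,-\omega)=\mathcal{H}_a(M,N,\omega)$. Indeed, suppose $H$ is not admissible for $\omega$: there is a non-constant orbit $\gamma$ of $X_H^{\omega}$ with $\gamma(0)\in N$ and return time $\le 1$, so $\gamma(T)\in F_{\gamma(0)}$ for some $0<T\le 1$. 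Set $\gamma'(t)\ceq\gamma(T-t)$; this is a non-constant orbit of $X_H^{-\omega}$ with $\gamma'(0)=\gamma(T)\in F_{\gamma(0)}\subset N$, and since $\gamma(T)$ and $\gamma(0)$ lie on the same leaf we have $F_{\gamma'(0)}=F_{\gamma(0)}$, so $\gamma'(T)=\gamma(0)\in F_{\gamma'(0)}$ shows the return time of $\gamma'$ is at most $T\le 1$. Hence $H$ is not admissible for $-\omega$, and the reverse implication follows by interchanging $\omega$ and $-\omega$. As $m(H)$ is independent of the symplectic form, $c(M,N,-\omega)=c(M,N,\omega)$.

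Finally, for $\alpha<0$ one combines the two cases: writing $\alpha\omega=|\alpha|\,(-\omega)$, the positive-scaling case applied to $|\alpha|$ and $-\omega$ together with the sign-reversal identity gives $c(M,N,\alpha\omega)=|\alpha|\,c(M,N,-\omega)=|\alpha|\,c(M,N,\omega)$, and with the $\alpha>0$ case this proves the conformality axiom. The one genuine subtlety is the time-reversal step, where one must keep track of the fact that the return time of the reversed orbit is measured against the \emph{same} characteristic leaf $F_{\gamma(0)}$; a slightly more careful phrasing handles the case in which the infimum defining the return time is not attained, but the argument is as above.
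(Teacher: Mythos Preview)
Your proof is correct and in fact more carefully argued than the paper's own.  The paper treats all nonzero $\alpha$ in one stroke: it defines the bijection $H\mapsto H_\alpha \ceq |\alpha|H$ from $\mathcal H(M,N,\omega)$ to $\mathcal H(M,N,\alpha\omega)$, computes $\iota_{X_{H_\alpha}}(\alpha\omega)=|\alpha|\iota_{X_H}\omega$ to conclude $\frac{\alpha}{|\alpha|}X_{H_\alpha}=X_H$, and then asserts in one sentence that ``the Hamiltonian flows for $H$ and $H_\alpha$ have the same orbits with the same return times.''  For $\alpha>0$ this is literally true (the vector fields coincide), but for $\alpha<0$ the vector fields differ by a sign, so this last assertion is really the time-reversal argument you spell out.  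Your decomposition into $\alpha>0$, $\alpha=-1$, and then composition makes this step explicit, and your observation that the reversed chord $\gamma'(t)=\gamma(T-t)$ starts on the \emph{same} leaf $F_{\gamma(0)}=F_{\gamma(T)}$ is precisely the point the paper's phrasing glosses over.  The paper's unified map is slightly more economical in bookkeeping; your case split buys clarity on why negative scalings preserve admissibility.
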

\begin{proof}
    Let $\alpha \ne 0$.
    Define a map $\psi \colon \mathcal{H}(M,N) \to \mathcal{H}(M,N)$ by 
\[ 
\psi(H) \ceq |\alpha|\cdot H,
\] 
and let $H_{\alpha}$ denote $\psi(H)$. 

Note that $\psi$ is clearly injective, and
$m(H_\alpha )=| \alpha | m(H)$, so the lemma follows if
\[
    \psi|_{\mathcal H_{a}(M, N, \omega, \sim)} 
    \colon \mathcal H_{a}(M, N, \omega, \sim) \to  
        \mathcal{H}_{a}(M, N, \alpha \omega, \sim)
\]
is a bijection.
Let $X_{H_\alpha}$ be the Hamiltonian vector field generated by $H_\alpha$ 
with respect to $\alpha \omega$, in other words such that $\alpha \omega( X_{H_{\alpha}}, \cdot) =
-dH_\alpha$.
Hence, 
\begin{align*}
    &\alpha \omega( X_{H_\alpha}, \cdot) = - |\alpha| dH 
    &\omega(X_{H_\alpha}, \cdot) = -\frac{|\alpha|}{\alpha}dH.
\end{align*}
Thus, $X_{H_\alpha} = \pm X_H$, depending on the sign of $\alpha$.
Therefore, the Hamiltonian flows for $H$ and $H_{\alpha}$ have the same orbits.
In particular, the constant chords are the same for the two Hamiltonians.
A non-constant chord for one of them, $x(0) \in N$, $x(T) \in N$ with $x(0) \sim
x(T)$, will be a chord for the other, 
by considering $x(t)$ itself if $\alpha > 0$ and the time reversal 
$t \mapsto x(T-t)$ if $\alpha < 0$.
Their return times are thus the same.
\end{proof}

In the next proposition, we give a lower bound for
$c\left(B(a,1),B^{n,k}(r),\omega_0\right)$ 
with $r = \sqrt{1-|a|^2}$.

\begin{prop}
\label{prop:Lower bound}
Let $a \ceq (0,\dots,0,b) \in \R^{2n}$, and set $r^2 = 1 - |a|^2 = 1 - |b|^2$.
For $k\in \{0,\dots,n-1\}$,
\[c\left(B(a,1),B^{n,k}(r),\omega_0 \right) \geq  \arcsin(r) - r(1-r^2)^{1/2}.
\]
\end{prop}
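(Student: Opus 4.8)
The plan is to construct, for every $\epsilon > 0$, an admissible simple Hamiltonian $H$ on $(B^{2n}(a,1), B^{n,k}(r))$ with $m(H) \geq \Ar(r) - \epsilon$. Since $c$ is defined as a supremum over $\mathcal{H}_a$, this yields the bound. The Hamiltonian should depend only on the $(x_n, y_n)$-coordinates, so that its dynamics on $\R^{2n}$ is effectively two-dimensional: write $H(x) = h(x_n, y_n)$ (or, because one wants rotational orbits, $H(x) = f(\pi(x_n^2 + y_n^2))$ away from the center and suitably cut off), so that the flow of $X_H$ rotates in the $(x_n, y_n)$-plane and leaves all other coordinates fixed. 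The key geometric input is the description of the intersection $B^{n,k}(r) \cap (\R^{2n-2}_{\text{proj}} \times \R^2_{x_n,y_n})$: in the $(x_n,y_n)$-plane, the coisotropic ball, after translating so that $a$ is at the origin of that plane, meets the plane in the segment $\{y_n = 0, |x_n| \leq r\}$ if $k < n$ (this uses that the last coordinate $y_n$ is one of the "missing" coordinates of $\R^{n,k}$ when $k \leq n-1$, and $b_n$ shifts the ball so that the relevant slice is the chord at height determined by $a$). Thus the characteristic foliation leaves of $N$ that matter are parametrized by this chord, and the "return time" condition becomes a condition on how long it takes the Hamiltonian flow to return a point of the chord to the chord.

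Concretely, I would model the picture on the disk $\mathcal{B}(r) = \{x^2 + y^2 \le 1, \ y \ge \sqrt{1-r^2}\}$ of area $\Ar(r)$, which is exactly the part of the unit disk lying above the chord; here one identifies the $(x_n,y_n)$-plane shifted by $-a$ with $\R^2$, and the chord at height $a = \sqrt{1-r^2}$ with $N$'s slice. A point on the chord, under a rotation-type Hamiltonian flow $z \mapsto e^{i t f'(\pi|z|^2)} z$ of appropriate speed, traces a circle and returns to the chord after a fraction of a full turn; making $H$ a function of $\pi|z|^2$ that is $0$ near the chord's interior and climbs to its maximum $m(H)$ near $\partial B^{2n}(a,1)$, one arranges that the only periodic orbits through $N$ with return time $\leq 1$ are constants, provided $m(H)$ is strictly less than the "area" swept, which is governed by $\Ar(r)$. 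The admissibility computation is the standard Hofer-Zehnder-style estimate: the return time of a circle of radius $\rho$ (in the $(x_n,y_n)$-plane, measured from the chord) is controlled by $1/f'(\pi \rho^2)$, and choosing $f$ with slope staying above $1$ on the relevant range of $\rho$ forces $T_\gamma > 1$ for all non-constant orbits meeting $N$; the maximal $m(H)$ one can reach this way, letting the profile $f$ approach a step function, tends to the area $\Ar(r)$ of the cap region $\mathcal{B}(r)$. One then checks the three defining conditions of "simple": the support set $K$ (a slightly shrunk ball) is compact in the interior, meets $N$ in a proper nonempty subset (true since $0 \in K \cap N$ but $N$ is noncompact/extends to the boundary), $U$ is a neighborhood of the chord's interior with $H \equiv 0$ there, again meeting $N$ properly, and $0 \le H \le m(H)$ by construction.

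The main obstacle I anticipate is the interplay between the coisotropic "return time" (relative to the characteristic foliation $\F$ of $N$) and the ordinary periodicity used in the classical Hofer-Zehnder argument: an orbit starting on $N$ need not be periodic, and "returning to the leaf $F_{\gamma(0)}$" is weaker than "returning to $\gamma(0)$." I would handle this by exploiting that $H$ depends only on $(x_n,y_n)$: the characteristic leaves of $N = B^{n,k}(r)$ are the affine subspaces obtained by freezing $(x_1,\dots,x_{n-1}, y_1,\dots,y_k)$ — more precisely by freezing the position in the symplectically-spanned directions and letting the "null" directions $x_n$ (and the others in $\R^{n,k}$) vary appropriately — so that $\gamma(t)$ lies in $F_{\gamma(0)}$ exactly when its $(x_n,y_n)$-component has returned to the chord. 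This reduces the return-time condition to the two-dimensional statement "the rotated point is back on the $x$-axis segment," which is precisely the condition one controls via the slope of $f$. A second, more technical point is correctly accounting for the translation by $a$: the radius $r$ of the coisotropic ball and the height $b_n$ of the center are linked by $r^2 = 1 - |a|^2$, and one must verify that the sub-disk of $B(a,1)$ lying on the $N$-side of the chord genuinely has area $\Ar(r) = \arcsin(r) - r\sqrt{1-r^2}$; this is the elementary area-of-a-circular-segment computation already flagged in the introduction, and it pins down why the bound is stated in terms of $r$ rather than $a$.
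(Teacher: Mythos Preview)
Your overall picture is right --- the lower bound is the area $\Ar(r)$ of the circular cap, and one reaches it by a rotationally symmetric Hamiltonian whose slope is kept just below what would create a return-time-one chord --- but the specific construction you describe does not work, and the return-time estimate is missing the crucial ingredient.

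\textbf{The Hamiltonian cannot depend only on $(x_n,y_n)$.} A simple Hamiltonian on $B^{2n}(a,1)$ must equal $m(H)$ outside a compact subset of the \emph{open} ball. If $H(x)=h(x_n,y_n)$, then along every slice $\{(x_n,y_n)=\text{const}\}$ the function is constant; such slices meet $\partial B^{2n}(a,1)$ whenever they are nonempty, so $H$ can equal $m(H)$ near the whole boundary only if $h\equiv m(H)$. Thus no nontrivial simple Hamiltonian of this form exists on the ball, and ``suitably cutting off'' in the other $2n-2$ directions destroys the two-dimensional dynamics you rely on. The paper instead takes the fully radial choice $H(x)=f(|x-a|^2)$, which is genuinely simple (vanishes on a small ball about $a$ --- and that ball meets $\R^{n,k}$ since $\operatorname{dist}(a,\R^{n,k})=|a|$ --- and equals $m(H)$ near $|x-a|=1$). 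The price is that the flow $z\mapsto e^{bJt}z$, $b=2f'(|z(0)|^2)$, rotates \emph{all} coordinate pairs simultaneously, and one must check the leaf-return conditions in each factor; the factors $i\le k$ and $k+1\le i\le n-1$ give the mild constraints $bT\in 2\pi\Z$ or $\pi\Z$, so the binding constraint still comes from the $(x_n,y_n)$ pair.

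\textbf{The return-time bound is not simply $1/f'$.} Because the orbit in the $(x_n,y_n)$-plane is a circle centred at $a$ (not at the origin), the chord $\{y_n=0\}=N$ is a secant at height $|a|$ rather than a diameter. A point starting at polar angle $\theta_0=\arcsin(|a|/\rho)$ on the circle of radius $\rho$ first returns to that secant after an angle $\pi-2\theta_0$, so the no-return condition for $T\le 1$ becomes
\[
2f'(\rho^2)<\pi-2\arcsin\!\left(\frac{|a|}{\rho}\right),\qquad\text{i.e.}\qquad f'(t)<\arccos\!\left(\frac{|a|}{\sqrt{t}}\right).
\]
This $\rho$-dependent slope bound is what replaces your ``slope $<1$'' heuristic, and it is exactly what integrates to $\Ar(r)$:
\[
\int_{|a|^2}^{1}\arccos\!\left(\frac{|a|}{\sqrt t}\right)\,dt=\arcsin r - r\sqrt{1-r^2}.
\]
So the area interpretation you give is correct, but it emerges from this angle-dependent slope condition, not from a flat Hofer--Zehnder bound.
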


\begin{proof}

We consider first when $|a| > 0$. We suppose, without loss of generality, that $b < 0$ and thus $b = -|a|$.

We will construct a family of Hamiltonian functions all of which are admissible and whose maximum is arbitrarily close to $\arcsin(r) - r(1-r^2)^{1/2}$. First, decompose $\R^{2n} = \R^n \oplus J\R^n = (x_1,\dots, x_n,y_1,\dots,y_n)$, where $J:\R^{2n} \to \R^{2n}$
\begin{align}
\begin{split}
\label{eq:J in Rn}
J(0,\dots,0,x_i,0,\dots,0) &= (0,\dots,0,y_i,0,\dots,0)\\
J(0,\dots,0,y_i,0,\dots,0) &= (0,\dots,0,-x_i,0,\dots,0),
\end{split}
\end{align}
and we understand $J\R^n$ to indicate $J$ applied to the first $n$ dimensions of $\R^{2n}$.
Choose $\epsilon > 0$,
and let $f\colon [0,1] 
\subset \R \to \mathbb{R}$
be a function with the following properties:
\begin{alignat*}{3}
& f(t)=0 &&\text{ for } t \in [0,|a|+\epsilon], \\
& 0 \leq f'(t) < \arccos \left(\frac{|a|}{\sqrt{t}} \right) \quad && \text{ for } t > |a| + \epsilon,\\
&f(t) = \max f \quad &&\text{ for } t \in [1-\epsilon, 1].
\end{alignat*}

Let $H \colon \R^{2n}\to \R$ be the Hamiltonian defined by $H(x) \ceq f(|x-a|^2)$.

We will first observe that $H$ is simple. Note first of all that $H = 0$ in an open ball around $a$, and this ball intersects $B^{n,k}(r)$, as required. Also observe that $H = \max f$ once $|x-a|^2 \ge 1-\epsilon$,
so this gives $H = \max f$ in a collar neighbourhood of the boundary of $B(a,1)$ as required.

We will now show that any such Hamiltonian $H$ will be admissible. 

We consider the associated Hamiltonian ODE given by
\begin{align*}
\dot{x} = J\nabla H(x) =& 2 f' \left(|x-a|^2\right)J(x-a)
\end{align*}
where $J \colon \R^{2n} \to \R^{2n}$ is the standard almost complex structure defined by Equation \ref{eq:J in Rn} above.
Since $\langle Jx, x \rangle = 0$, we see that $|x-a|^2$ is constant along solutions of the equation. Thus, with $z(t) \ceq x(t) - a$
we have for $\kappa = 2f'( |z(0)|^2 ) \ge 0$, 
\begin{equation} 
\dot z =  \kappa J z.
\end{equation}
Thus, $z(t) = \e^{\kappa Jt} z(0)$.

If a trajectory $z(t)$ starts on the coisotropic submanifold, we then have the initial conditions
\[
z(0) \in \R^{n,k}.
\]

To verify admissibility, we will show that every non-constant trajectory starting on the coisotropic submanifold has (non-leafwise, coisotropic) return time greater than $1$.

Let $z(t)$ be such a non-constant trajectory, with $z(0)\in B^{n,k}(r)$. It follows then that $\kappa \ne 0$.  Now consider the triangle formed by the origin, $a$, and $c= (0,\dots,-r,0,\dots,0)$, where the $-r$ is in the $n$-th position. Note that, if we consider the plane defined by these three points, then any flow $z(t)$ with $z(0)$ on the line from $a$ to $c$ flows counterclockwise in this plane. Since $f$ is a radial function, we may, without loss of generality, simply consider any such flow $z(t)$ with $z(0)$ on this line.

\begin{figure}
\tdplotsetmaincoords{11}{0}
\begin{tikzpicture}[scale=3.5]
	\path [use as bounding box] (-1.2,-0.4) rectangle (1.3,1.83);
	\draw (0,0) -- (0,0.71) node [anchor=north] at (0,0) {$0$}
		node [anchor= south] at (0,0.71) {$a$};
	\draw (0.71,0) -- (-0.71,0) node [anchor=south] at (0.5,0) {$B^{n,k}(r)$};
	\draw (0,0.7) circle[radius=1];
	\node at (-0.1,0.64) {$\theta$};
	\filldraw (-0.4,0.71) circle (0.5pt) node [anchor=south] {$x(0)$}; 
	\draw (-1,0.71) -- (1,0.71);
	\draw (-0.4,0) -- (-0.4,0.71);
	\draw [densely dashed] (0,0.71) -- (-0.4,0) node [anchor=south] at (-0.26,.3) {$\rho$}; 
	\filldraw[black] (-0.4,0) circle (0.5pt) node [anchor=north] {$z(0)$};
	\node at (-0.85,1.65) [node font = \LARGE]{$B(a,1)$};
\end{tikzpicture}
\caption{}
\label{fig:Lower bound}
\end{figure}

 Let $\rho = \sqrt{|x(0)|^2+|a|^2} = |z(0)| \geq |a|$, and let $\theta$ be the angle so that $|x(0)| = \rho \cos(\theta)$ and $|a| = \rho \sin(\theta)$. See Figure \ref{fig:Lower bound} for an illustration. Hence, $\theta = \arcsin(\frac{|a|}{\rho})$. 
 If $T$ is such that $z(T)$ belongs to $\R^{n,k}$, we have $\sin(\kappa T + \theta) = \sin( \theta) = \frac{|a|}{\rho_z}$, which holds if and only if $\kappa T \in 2 \pi \Z$ or $\kappa T = -2\theta + k\pi$ for some $k$ odd. In particular then, if $\kappa < \pi - 2 \theta$, there can be no chords of length at most $1$. Recall now that $\kappa = 2f'( |z(0)|^2 )$. Thus, the condition is satisfied if we have $2 f'( \rho^2 ) < \pi - 2\theta$ for each $\rho_z$. This is achieved if 
\[
f'( \rho^2) < \frac{\pi}{2} - \theta =  \arccos \left(\frac{|a|}{\rho}\right)
\]
However, by assumption, $f'(\rho_z^2) < \arccos\left(\frac{|a|}{\rho_z}\right)$.
Observe now that by choosing $\epsilon > 0$ sufficiently small, we may arrange
for $f(1)$ to be arbitrarily close to 
\[
\begin{aligned}
\int_{|a|^2}^1 \arccos\left( \frac{|a|}{\sqrt{t}}\right) dt 
&= \int_0^{\arccos(|a|)} 2 |a|^2 \alpha \cos(\alpha)^{-3} \sin(\alpha) d \alpha \\
&= |a|^2 \left (  \alpha \cos(\alpha)^{-2} - \tan \alpha  \right )  \Big \vert_0^{\arccos(|a|)}  \\
&= \arcsin r - r \sqrt{1-r^2}.
\end{aligned}
\]
(Recalling that $a^2 + r^2 =1$.)

From this, we conclude
\[
c(B(a,1),B^{n,k}(r),\omega_0) 
\geq \arcsin (r) -  r(1-r^2)^{1/2},
\]
as desired, in the case that $|a| > 0$.

If $a = 0$, we observe that for each $\delta > 0$, we may set $p = (0, \dots, 0, -\delta)$
and then we consider the inclusion of the ball $B(p, 1-\delta) \subset B(0, 1)$. 
The intersection of $B^{n,k}(r)$ with this smaller ball is given by $B^{n,k}( \sqrt{1 - 2\delta}) + p$.
After a translation of the origin, this gives a relative embedding of the pair $(B(-p, 1-\delta), B^{n,k}(\sqrt{1-2\delta}) )$. Let $r_\delta = \sqrt{1 - 2\delta}$. Then, applying the above construction and the Monotonicity Axiom (Lemma \ref{lem:monotonicity}), we have for each $\delta > 0$, 
\[
c(B(0,1),B^{n,k}(1),\omega_0) \geq \arcsin (r_\delta) -  r_\delta (1-r_\delta^2)^{1/2}
\]
Taking $\delta \to 0$, we obtain $c(B(0,1), B^{n,k}(1) ) \ge \arcsin(1) = \frac{\pi}{2}$, proving the result.

\end{proof}

\begin{proof}[Proof of Corollary \ref{Cor:non-squeezing}]

    By the non-triviality and conformality axioms for the capacity,
    we obtain that $c(W(R), W^{n,k}(R)) = \frac{\pi}{2} R^2$. 

    The monotonicity of the capacity $c$ and Proposition \ref{prop:Lower bound}
    then give that a relative embedding $(B(a,1), B^{n,k}(r)) \hookrightarrow
    (W(R), W^{n,k}(R))$ respecting the leaf relations  exists only if 
    \[
        \arcsin (r) -  r(1-r^2)^{1/2} \le \frac{\pi}{2}R^2.
    \]

    To prove that this suffices, we will construct an embedding for any $R$ that
    satisfies
    \[
        \arcsin (r) -  r(1-r^2)^{1/2} < \frac{\pi}{2}R^2.
    \]

    By a slight abuse of notation (since $a \in \R^{2n}$), 
    let $D(a, \rho) \subset \R^2$ be the disk of radius $\rho$ centred at $(0,
    -|a|)$. 

    First, we notice that the ball embeds in an appropriate polydisk:
    \begin{align*}
        B(a,1)\subset& D^2(0,1) \times \dots D^2(0, 1) \times D^2(a, 1)\\
        \begin{split} =&\{ (x_1, \dots, x_n, y_1, \dots, y_n) \, | \\ & \quad x_1^2+y_1^2 < 1,
            \dots, x_{n-1}^2 + y_{n-1}^2 < 1, (x_n+a)^2 +
    y_n^2 < 1 \}. \end{split}
    \end{align*}
    This respects the leaf relation on $\R^{n,k}$. 

    We will now construct an embedding 
    \[
        \psi \colon D^2(0,1) \times \dots D^2(0, 1) \times D^2(a, 1) \to W(R)
    \]
    of the form 
    \[
        \psi(x_1, \dots, x_{n-1}, x_n, y_1, \dots, y_{n-1}, y_n) = (x_1, \dots,
        x_{n-1}, f(x_n, y_n), y_1, \dots, y_{n-1}, g(x_n, y_n))
    \]
    for a suitable choice of map
    \[
        \phi \colon \R^2 \to \R^2, \phi(x,y) = (f(x,y), g(x,y)).
    \]
    Let $W^2(R) \coloneqq \{ (x, y) \in \R^2 \, | \, x^2+y^2 < R^2 \text{ or }
    y < 0 \}$.  
    Observe now that $\psi$ is symplectic if and only if $\phi$ is area
    preserving. 
    Furthermore, $\psi$ gives a relative embedding of the polydisk into $W(R)$
    (with coisotropic submanifold given by the restriction of $\R^{n,k}$ to
    each)
    if and only
    $\phi \colon (D(a, 1), \R \cap D(a,1)) \to (W^2(R), \R \cap W^2(R))$ is a
    relative embedding. Finally, we observe that if $\phi$ is such a relative
    embedding, it immediately follows from the explicit description of the leaf
    relation in Example \ref{ex:Isotropic leaves of Rnk} that $\psi$ respects the leaf relation.

    It suffices therefore to find an embedding $\phi \colon D(a,1) \to W^2(R)$.
    By a standard Moser-type argument, this exists whenever the area of the
    smaller of the two connected components of $D^2(a,1) \setminus \R$ is
    strictly smaller than the area of the upper half disk in $W^2(R) \setminus
    \R$. The result now follows by computing this area, as in Remark
    \ref{rem:area of region bounded by chord}.

\end{proof}

\section{An upper bound for $c\left( U(r), U^{n,k}(r), \omega_0, \sim\right)$}
\label{sec:Capacity proof}

In the following, we will write $c( M, N) = c(M, N, \omega,\sim)$
since we are considering subsets $M \subset \R^{2n}$ with respect to the
standard symplectic form. Furthermore, we will take the equivalence relation to
be the leafwise equivalence relation.

In order to show that $c$ is a coisotropic capacity, 
we must establish the non-triviality axiom. 
Recall that we have defined 
\[
    W(1) = \left \{ (x_1, \dots, x_n, y_1, \dots, y_n) \in \R^{2n} \; | \; x_n^2 + y_n^2  < 1 \text{ or } y_n < 0 
\right\}
\]
and $W^{n,k}(1) = W(1) \cap \R^{n,k}$, with $\R^{n,k}$ 
the standard $(n+k)$-dimensional coisotropic subspace of $\R^{2n}$, given by 
\[
\R^{n,k} = \left \{ (x_1, \dots, x_n, y_1, \dots, y_k, 0, \dots, 0) \right \}.
\]

By the relative symplectic embedding of the ball 
\[
(B(1), B^{n,k}(1)) \hookrightarrow (W(1), W^{n,k}(1) ), 
\]
and monotonicity  (Lemma \ref{lem:monotonicity}), together with Proposition \ref{prop:Lower bound}, it suffices to prove the following inequality:
\[
c(W(1), W^{n,k}(1) )  \le \frac{\pi}{2}.
\]

For our analytical set-up, it is most convenient to work with the region $U(1)$ in $\R^{2n}$ given as the union of the disk with a half-infinite strip
\[
U(1) = \R^{2n-2} \times \{ (x,y) \in \R^2 \, | \, x^2 + y^2 < 1 \text{ or } -1 < x < 1 \text{ and } y < 0 \}
\]
and $U^{n,k}(1) = U(1) \cap \R^{n,k}$. 
In the following, we will write $U = U(1)$ and $U^{n,k} = U^{n,k}(1)$.

We claim that the relative capacities of these two domains are the same:
\[
c(W(1), W^{n,k}(1) ) = c( (U(1), U^{n,k}(1) ) ).
\]
Observe that there is a relative embedding 
\[
(U(1), U^{n,k}(1)) \hookrightarrow (W(1), W^{n,k}(1) )
\]
by inclusion, showing one inequality. 
The other inequality is by applying Lemma \ref{lem:referee request}. 
Indeed, for any compact set $K \subset W(1)$, by a Moser argument, 
we may find an open neighbourhood $V$ and a symplectic embedding 
$V \hookrightarrow U(1)$
that may be taken to the the identity in the region $y_n > -\delta$ for $\delta
> 0$ sufficiently small, and hence is the identity on the coisotropic
submanifold. The existence of such a symplectic embedding for each compact $K
\subset W(1)$ then verifies the hypotheses of the Lemma, and the claim
follows. 

\begin{prop} \label{prop:Upper Bound}
The map $c$ verifies
\[
c( U, U^{n,k} ) \le \frac{\pi}{2}.
\]
\end{prop}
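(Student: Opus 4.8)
The plan is to argue by contradiction and to adapt the variational proof of the Hofer--Zehnder cylinder capacity estimate \cite{Hofer_Zehnder_1994}; as announced in the introduction, the adaptation runs on a path space carrying the coisotropic boundary conditions and uses a different extension of the Hamiltonian. Suppose $c(U,U^{n,k})>\pi/2$; then there is $H\in\mathcal H_a(U,U^{n,k})$ with $m:=m(H)>\pi/2$, and it suffices to produce a non-constant solution $\gamma$ of $\dot\gamma=X_H(\gamma)$ with $\gamma(0)\in U^{n,k}$ and return time $T_\gamma\le1$. I would decompose $\R^{2n}=\R^{2n-2}_{x_1,\dots,x_{n-1},y_1,\dots,y_{n-1}}\times\R^2_{x_n,y_n}$, so that $U=\R^{2n-2}\times S$ with $S=S(\pi/2)$ the planar disk-with-strip region; since $k\le n-1$ the characteristic distribution of $\R^{n,k}$ is $\operatorname{span}\{e_{x_{k+1}},\dots,e_{x_n}\}$, so $U^{n,k}$ meets the second factor in the segment $\{y_n=0,\ |x_n|\le1\}$ and a solution through $U^{n,k}$ is leafwise closed exactly when the coordinates $x_i,y_i$ $(i\le k)$ and $y_i$ $(k<i\le n)$ have returned to their initial values. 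The compact set $K\subset U$ on which $H\not\equiv m$ is contained in $B^{2n-2}(\lambda_0)\times(S\cap\{y_n\ge-\mu\})$ for some $\lambda_0,\mu>0$; fixing the side length of a torus $T^{2n-2}$ large compared to $\lambda_0$ and $\|\nabla H\|_\infty$, one embeds a ball $B^{2n-2}(\lambda)$ with $\lambda>\lambda_0$ symplectically into $T^{2n-2}$ and sets $W:=T^{2n-2}\times\R^2$, so that the relevant part of $(U,U^{n,k})$ maps into $W$, with a closed coisotropic submanifold $\widehat N\subset W$ (the product of an $(n-1+k)$-dimensional subtorus with the $x_n$-axis) into which $U^{n,k}$ maps near $K$.

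Next I would extend $H$ to $\widehat H\colon W\to\R$ so that: (i) $\widehat H\ge0$, $\widehat H$ agrees with $H$ on (the image of) a neighbourhood of $K$, $\{\widehat H<m\}$ is contained in the image of $U$, $\|\nabla\widehat H\|_\infty$ is comparable to $\|\nabla H\|_\infty$, and $\widehat H(\,\cdot\,,z_n)=\beta|z_n|^2+\text{const}$ with $\beta<\pi/2$ for $|z_n|$ large; (ii) hence every non-constant solution of $\dot x=X_{\widehat H}(x)$ stays on a level set $\{\widehat H=h\}$, $h<m$, contained in the image of $U$; and (iii) since the torus was taken large, a non-constant solution starting on $\widehat N$ with return time $\le1$ moves less than the injectivity radius of $T^{2n-2}$ in each periodic direction $x_i,y_i$ $(i\le k)$, so lifts to a non-constant leafwise solution of $X_H$ through $U^{n,k}$ of return time $\le1$. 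I expect the construction of an extension satisfying (i)--(iii) to be the main obstacle: the quadratic part must sit far enough from $\widehat N$ that it creates no fast leafwise orbit through $\widehat N$ there — this is why one needs $\beta<\pi/2$, and it is exactly why the half-strip of $S(\pi/2)$ supplies the room needed below $\widehat N$ — while $m(\widehat H)=m>\pi/2$ is preserved and the wrapping of the periodic coordinates around $T^{2n-2}$ is controlled. This is where the argument departs from \cite{Hofer_Zehnder_1994}.

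With $\widehat H$ fixed, I would consider on the space $\mathcal E$ of $H^{1/2}$-paths $x\colon[0,1]\to W$ with $x(0)\in\widehat N$ and $x(1)$ in the characteristic leaf of $x(0)$ — that is, periodic in $x_i,y_i$ $(i\le k)$, with $y_i(0)=y_i(1)=0$ for $k<i\le n$ and $x_{k+1},\dots,x_n$ free at both endpoints — the action functional
\[
\Phi(x)=\tfrac12\,a(x)-\int_0^1\widehat H(x(t))\,dt,
\]
where $a(x)=\int_0^1\langle-J\dot x,x\rangle\,dt$ is the usual symplectic action extended to $H^{1/2}$. Its first variation vanishes at $x$ precisely when $\dot x=X_{\widehat H}(x)$ and the boundary terms cancel against variations tangent to $\widehat N$; because $\{(z,z')\in W\times W:z\in\widehat N,\ z'\text{ lies in the leaf of }z\}$ is Lagrangian in $(W\times W,(-\omega)\oplus\omega)$, this is equivalent to $x$ being a leafwise solution of $X_{\widehat H}$ through $\widehat N$ of return time $\le1$. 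I would split $\mathcal E=\mathcal E^+\oplus\mathcal E^0\oplus\mathcal E^-$ by the sign of $a$ (the $T^{2n-2}$- and $\R^2$-directions contributing to $\mathcal E^0$) and run the linking/minimax scheme of \cite{Hofer_Zehnder_1994}: the minimax value $c^*=\inf_\Gamma\sup\Phi$ over the standard linking class is finite, using the quadratic growth of $\widehat H$, and is positive by the linking estimate, which uses that $\widehat H$ vanishes near a point of $\widehat N$ together with $m(\widehat H)>\pi/2$; the Palais--Smale condition holds, so $c^*$ is a critical value of $\Phi$.

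Finally, a constant path $x\equiv z_0\in\widehat N$ has $\Phi(z_0)=-\widehat H(z_0)\le0<c^*$, so the critical point $x^*$ realising $c^*$ is non-constant and hence a non-constant leafwise solution of $X_{\widehat H}$ through $\widehat N$ of return time $\le1$; an a priori bound on its action (from $c^*<\infty$ and the growth of $\widehat H$) keeps it in the image of $U$, and then by (ii)--(iii) it lifts to a non-constant leafwise solution of $X_H$ through $U^{n,k}$ of return time $\le1$, contradicting $H\in\mathcal H_a(U,U^{n,k})$. Therefore $m(H)\le\pi/2$ for every admissible $H$, i.e. $c(U,U^{n,k})\le\pi/2$. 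Once the extension of the second step is pinned down, the linking estimate and the Palais--Smale verification should be close to those of \cite{Hofer_Zehnder_1994}; the new content is concentrated in that step.
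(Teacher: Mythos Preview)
Your overall architecture is right---contradiction, extend $H$, run a linking/minimax argument on a path space with leafwise boundary conditions, and rule out that the critical point comes from the extension---but the specific choice you make for the extension cannot work, and this is precisely the point where the paper's argument has genuine new content.

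You require the extension to satisfy $\widehat H(\,\cdot\,,z_n)=\beta|z_n|^2+\text{const}$ with $\beta<\pi/2$, in order that the quadratic part create no leafwise chord of return time $\le 1$. But this is incompatible with the linking estimate. The relevant positive direction in the path space is $e^+(t)=\e^{\pi J t}e_n$, for which $a(se^+)=\tfrac{\pi}{2}s^2$ while $\int_0^1 \widehat H(se^+)\,dt\sim \beta s^2$; hence $\Phi(se^+)\sim(\tfrac{\pi}{2}-\beta)s^2\to+\infty$. The minimax value over your linking class is then $+\infty$, and the condition $\Phi|_{\partial\Sigma}\le 0$ fails on the $s=\tau$ face. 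In short, the very feature of $U$ that makes the statement interesting---the half-disk in the $z_n$-plane contributes action $\pi/2$ to the basic chord $e^+$---forces the growth rate of the extension to exceed $\pi/2$, so you cannot avoid fast leafwise chords of $\widehat H$ by choosing $\beta$ small.

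The paper resolves this tension in the opposite direction. It takes growth rate $\tfrac{\pi}{2}+\epsilon$ and \emph{permits} the extension to have leafwise chords, but arranges that every such chord has action $\le 0$, so the positive-action critical point found by minimax must be a genuine chord of $H$ (Lemma~\ref{lem:real solution}). The device that makes both halves work simultaneously is the ``half-quadratic'' function
\[
q_\Pi(x)=q_{2n-2}(x')+\begin{cases}x_n^2+y_n^2,& y_n\ge 0,\\ x_n^2,& y_n<0,\end{cases}
\]
with $\bar H=f\circ q_\Pi$ outside $\{q_\Pi\le 1\}$: the asymmetry in $y_n$ is exactly tailored to the disk-plus-strip shape of $U$, and the key new computation (Lemma~\ref{lem:Integral inequality}) shows $\int_0^1 q_\Pi(x^-+x^0+se^+)\,dt\ge \int q_\Pi(x^0)+\int q_\Pi(se^+)$, which is what drives $\Phi|_{\partial\Sigma_\tau}\le 0$ despite $q_\Pi$ not being a genuine quadratic form. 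This inequality is the substantive new step you are missing. A secondary point: the paper works directly on $\R^{2n}$ with the Hilbert space $H^{1/2}_{n,k}$ (the $+1$-eigenspace of the involution fixing $\R^{n,k}$), rather than compactifying to a torus; this avoids the wrapping issues you raise, and Palais--Smale is obtained by adding small quadratic terms in the remaining coordinates to $q_\Pi$.
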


As explained above, this will then prove Theorem \ref{thm:Capacity}.
The remainder of this section will prove Proposition \ref{prop:Upper Bound}.

\subsection{The analytical setting}

\begin{defn}
We recall from Example \ref{ex:Isotropic leaves of Rnk} that $\omega_0$ denotes the standard symplectic form on $\R^{2n}$, and
that $\R^{n,k}$ is the linear coisotropic subspace of $(\R^{2n},\omega_0)$ consisting
of the first $n+k$ coordinates, i.e. 
\begin{equation*}
\R^{n,k} = \left \lbrace x \in \R^{2n} \; \vert  \; 
x = (x_1,\dots,x_n,y_1,\dots, y_k,0,\dots,0) \right \rbrace.
\end{equation*}
Let $V_0, V_1$ and $W_0$ be the linear subspaces
\begin{align*}
V_0 &= \left \lbrace x \in \R^{2n} \; 
\vert \; x = (0,\dots,0,x_{k+1},\dots,x_n,0,\dots,0) \right \rbrace, \\
V_1 &= \left \lbrace x \in \R^{2n} \; 
\vert \; x=(x_1,\dots,x_k,0,\dots,0,y_1,\dots,y_k, 0\cdots,0) \right \rbrace \\
W_0 &= \left \lbrace \in \R^{2n} \; 
\vert \; x=(0,\dots,0,y_{k+1},\dots,y_n) \right \rbrace.
\end{align*}
\begin{rem}As noted in Example \ref{ex:Isotropic leaves of Rnk}, any leaf $F$ in the characteristic foliation has the form $z + V_0$, 
for $z \in \R^{n,k}$.
\end{rem}
Let $C_{n,k}^{\infty}\left([0,1]\right)$ denote the
space of smooth maps $\psi:[0,1]\to \R^{2n}$ such that $\psi(0),\psi(1) \in
F \subset \F$ for some isotropic leaf $F$ in the characteristic
foliation $\F$ of $\R^{n,k}$. Let $\langle\cdot, \cdot \rangle$ be the standard
inner product on $\R^{2n}$, and define the functional 
$\Phi_H \colon C_{n,k}^{\infty}\left([0,1]\right) \to \R$ by
\begin{equation}
\label{eq:Phi_H}
\Phi_H(\psi) = \frac{1}{2}\int_0^1 \langle -J\dot{\psi}(t),\psi(t) \rangle dt -
\int_0^1 H(\psi(t)) dt.
\end{equation}
\end{defn}

In order to study the critical points of $\Phi_H$, we will extend the
definition of $\Phi_H$ to the Hilbert space of $H^{1/2}$ paths. 
The Hilbert space is constructed so the paths have boundary in $\R^{n,k}$, even though 
$H^{1/2}$ does not embed in $C^0$, and thus a pointwise constraint cannot be imposed.
The key observation we use is that $\R^{n,k}$ is the fixed point locus of an involution on $\R^{2n}$,
which then induces an isometry on $H^{1/2}(S^1, \R^{2n})$. Our path space is then an eigenspace 
of this isometry, though we also describe it explicitly.

We first show the
following.
\begin{lem}
\label{lem:Fourier decomposition}
Any element $\gamma \in
C^{\infty}_{n,k}\left([0,1]\right)$ is given by 
\begin{equation}
\label{eq:Fourier decomposition}
\gamma(t) = \sum_{k\in \Z} e^{k\pi J t}a_k  + \sum_{k\in 2\Z} e^{k\pi J t}b_k 
\end{equation}
where
\begin{equation}
\label{eq:Boundary conditions}
\begin{split}
&a_k \in V_0 \subset \R^{n,k} \subset \R^{2n}, \text{ and }\\
&b_k \in V_1 \subset \R^{n,k} \subset \R^{2n}.
\end{split}
\end{equation}
Equivalently, 
\begin{equation*}
\gamma(t) = \sum_{k\in \Z} z_k e^{k\pi J t}  
\end{equation*}
with $z_k \in V_0$ for odd $k$ and $z_k \in V_0 \oplus V_1$ 
for even $k$ (i.e.~$z_k = a_k + b_k$ with $b_k = 0$ for all odd $k$).
\end{lem}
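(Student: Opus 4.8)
The plan is to prove Lemma \ref{lem:Fourier decomposition} by first establishing the Fourier expansion of an arbitrary smooth path $\gamma \in C^\infty_{n,k}([0,1])$ without boundary conditions, and then extracting the constraints \eqref{eq:Boundary conditions} from the requirement that $\gamma(0)$ and $\gamma(1)$ lie in a common isotropic leaf $F = z + V_0$ of the characteristic foliation of $\R^{n,k}$. The natural idea is to exploit the involution $\sigma$ on $\R^{2n}$ whose fixed point set is $\R^{n,k}$: explicitly, $\sigma$ acts as the identity on the first $n+k$ coordinates $(x_1,\dots,x_n,y_1,\dots,y_k)$ and as $-\Id$ on the last $n-k$ coordinates $(y_{k+1},\dots,y_n)$, i.e.~$\sigma = \Id$ on $\R^{n,k}$ and $\sigma = -\Id$ on $W_0$. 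One checks that $\sigma$ is both orthogonal and anti-symplectic, and that $\sigma J \sigma^{-1} = -J$ on the relevant summand; more precisely, $\sigma$ commutes with $J$ on the $V_1$ part and anticommutes with $J$ on the $V_0 \oplus W_0$ part. The boundary conditions should be phrased as: $\gamma(0), \gamma(1) \in \R^{n,k}$ and $\gamma(1) - \gamma(0) \in V_0$.

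The key steps, in order: First, reflect the path to double its period. Given $\gamma \colon [0,1] \to \R^{2n}$ with the stated endpoint conditions, one would like to extend it to a path on $[0,2]$ (equivalently $\R/2\Z$) by a reflection rule that turns the boundary conditions into smoothness/periodicity of the extension, so that the extension has a genuine Fourier series in $e^{k\pi J t}$ (period $2$). The reflection must be chosen compatibly with $\sigma$ on the two pieces: on the $V_1$-component one reflects one way (forcing only $e^{2k\pi Jt}$ modes, since that component of $\gamma$ must return to its starting point after time $1$, being pinned down in the leaf), and on the $V_0 \oplus W_0$ component one reflects the other way. Second, expand the extended path in its Fourier series $\sum_{k} e^{k\pi Jt} z_k$ with $z_k \in \R^{2n}$. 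Third, impose the symmetry coming from the reflection: the half-integer-frequency behavior forces the odd modes $z_{2k+1}$ to lie in $V_0$ (they come from the $V_0\oplus W_0$ piece, and the requirement $\gamma(0),\gamma(1)\in\R^{n,k}$ kills the $W_0$ part), while the even modes $z_{2k}$ are unconstrained within $V_0 \oplus V_1$; one also needs to see the $W_0$ part of the even modes vanishes because $\gamma(t) \in \R^{n,k}$ at the endpoints and, after symmetrization, everywhere in the relevant component. Setting $a_k = z_{2k+1}$-type contributions and $b_k = z_{2k}$ contributions then yields \eqref{eq:Fourier decomposition} and \eqref{eq:Boundary conditions}, and the ``equivalently'' statement is just a reindexing ($z_k = a_{(k-1)/2}$ for odd $k$, $z_k = a_{k/2} + b_{k/2}$ for even $k$, with $b_k=0$ for odd $k$).

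I expect the main obstacle to be bookkeeping the reflection/extension correctly on the three subspaces $V_0$, $V_1$, $W_0$ simultaneously and verifying it produces a $C^\infty$ (or at least $H^{1/2}$, for the later extension) function on the doubled circle: the endpoint conditions are asymmetric across the summands ($V_0$: free endpoints in the leaf direction; $V_1$: pinned endpoints; $W_0$: zero endpoints but otherwise free in the interior), so the gluing rule that makes the reflected path smooth at $t=0$ and $t=1$ differs on each piece, and one must check the derivatives match to all orders. A clean way to organize this is to decompose $\gamma = \gamma_{V_0} + \gamma_{V_1} + \gamma_{W_0}$ according to $\R^{2n} = V_0 \oplus V_1 \oplus W_0$, treat each component's 1D-family of $\C$-lines separately using the ODE-free structure (the subspaces are $J$-invariant or swapped appropriately), and then reassemble. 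A secondary, more conceptual point to get right is the claim in the surrounding text that the path space is an eigenspace of the isometry induced by $\sigma$ on $H^{1/2}(S^1,\R^{2n})$ — while not strictly needed for this lemma, matching the explicit description here with that characterization is worth a remark, and amounts to observing that $e^{k\pi Jt} z_k$ with the stated constraints are exactly the $\sigma$-equivariant Fourier modes.
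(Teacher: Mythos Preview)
Your proposal is correct and follows essentially the same approach as the paper: both arguments double the path via the involution whose fixed set is $\R^{n,k}$ (the paper calls it $c_{n,k}$, your $\sigma$), then read off the Fourier coefficients of the extended periodic function and use the reflection symmetry to force the coefficients into the stated subspaces. The only organizational difference is that the paper first isolates the $1$-dimensional model $\gamma\colon[0,1]\to\C$ with $\gamma(0),\gamma(1)\in\R$ and then decomposes $\gamma$ into its $n$ complex coordinate lines $z_i$ (applying the model for $i>k$ and ordinary periodicity for $i\le k$), whereas you group these lines into the blocks $V_1$ and $V_0\oplus W_0$; your concern about matching derivatives at the join is not an obstacle, since the doubled path is continuous and piecewise smooth, which already suffices for the Fourier expansion and the coefficient symmetry argument.
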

\begin{proof}
We begin by identifying $\R^{2}$ with $\C$, and we consider a smooth map
$\gamma(t):[0,1]
\to \C$ such that $\gamma(0), \gamma(1) \in \R \subset \C$. We now extend this
map to a piecewise smooth map $\alpha(t):S^1 \to \C$ by
\begin{equation*}
 \alpha(t) = \begin{cases} \gamma(2t) & t \in \left[0,\frac{1}{2}\right] \\
 \overline{\gamma(2-2t)} & t \in \left(\frac{1}{2},1\right],
             \end{cases}
\end{equation*}
where the bar indicates complex conjugation. Note that $\alpha(t)$ is
continuous by definition. Writing $\alpha(t)$ in terms of its Fourier
decomposition, we have
\begin{equation*}
\alpha(t) = \sum_k e^{2\pi i k t} a_k.
\end{equation*}
However, since $\alpha(t) = \overline{\alpha(1-t)}$, and therefore
\begin{align*}
 \sum_k e^{2\pi i k t}a_k &= \sum_k e^{-2\pi i k (1-t)}\overline{a_k} \\
 &= \sum_k e^{-2\pi i k }e^{2\pi i k t}\overline{a_k} \\
 &= \sum_k e^{2\pi i k t}\overline{a_k},
\end{align*}
which implies that $a_k = \overline{a_k}$, and therefore $a_k \in \R \subset
\C$. Our original function $\gamma(t)$ is recovered by $\gamma(t) = \alpha(t/2)
= \sum_k e^{\pi i k t}a_k$, where $a_k \in \R$.

Now consider a function $\gamma(t):[0,1] \to \R^{2n}$ such that $\gamma(0),
\gamma(1) \in F$,
where $F$ is a leaf of the characteristic foliation of $\R^{n,k}$. 
Write a point $x\in \R^{2n}$ by
$(x_1,\dots,x_n,y_1,\dots,y_n)$, where
$\omega_0(\frac{\partial}{\partial x_i},\frac{\partial}{\partial y_i}) = 1$, $J
\frac{\partial}{\partial x_i}=\frac{\partial}{\partial y_i}$, for $J$ the standard complex
structure on $\R^{2n}$, and define $c_{n,k}:\R^{2n} \to \R^{2n}$ by
\begin{equation*}
 c_{n,k}(x) := (x_1,\dots,x_n,y_{1},\dots,y_k,-y_{k+1},-y_n).
\end{equation*} 
Recall that $\R^{n,k}$
is the set of points 
\begin{equation*}
\R^{n,k} = \{ x \in \R^{2n} | x = (x_1,\dots,x_n,y_1,\dots,y_{k},0,\dots,0)\}.
\end{equation*}
In the special case of a Lagrangian, i.e.~for $\R^{n,0}$, we note that $c_{n,0}$ 
is a real structure for $\omega_0$, i.e. $c_{n,0}^*\omega_0 = -\omega_0$. 

Any leaf $F$ of $\mathcal{F}$ is a set of the form
\begin{equation*}
\{x \in \R^{n,k} \,|\, x = (0,\dots,0,x_{k+1},\dots,x_n,0,\dots,0) + z\}
\end{equation*}
for some fixed $z = (x_1,\dots,x_k,0,\dots,0,y_1,\dots,y_k,0,\dots,0)$.
We may write
$\gamma(t)$ as a function $\gamma(t) = z_1(t) + z_2(t) + \dots + z_n(t)$, where each
function
$z_i \colon  [0,1] \to \R^{2n}$ is a map $t \mapsto
(0,\dots,0,x_i(t),0,\dots,0,y_i(t),0,\dots,0)$ for real functions $x_i,y_i:[0,1]
\to\R$.

From the above, we see that if $i > k$, then 
\begin{equation*}
z_i(t) = \sum_j e^{J\pi jt}a_{i,j}
\end{equation*}
where $a_{i,j} = a_{j}e_{i}$ for constants $a_{j}\in \R$, $e_i$ a vector with $1$
in the $i$-th position and $0$s elsewhere. This then gives that $a_{i,j} \in V_0$.

For $i\leq k$, $z_i(0)=z_i(1)$, and we have
\begin{equation*}
z_i(t) = \sum_j e^{2\pi j J t}a_{i,j},
\end{equation*}
where $a_{i,j} = a_{j}e_{i}$ with $a_j \in \C$.
From this, we have that $a_{i,j} \in V_1$.

The conclusion of the lemma now follows immediately.
\end{proof}

\begin{rem}
Note that if $\gamma \in C^0([0,1],\R^{2n}) \cap L^{1}$ and is of the form
\[
\gamma(t) = \sum_{k\in \Z} e^{k\pi J t}a_k + \sum_{k\in 2\Z} e^{k\pi J t}b_k
\] with
$a_k, b_k$ as in Equation \ref{eq:Boundary conditions} above, then necessarily 
$\gamma(0), \gamma(1) \in F$.
\end{rem}
\begin{defn} Let $L^2_{n,k}([0,1])$ be the Hilbert space
\begin{equation*}
\begin{split}
L^2_{n,k} = \bigg\{\gamma \in L^2([0,1],\R^{2n}) \, \bigg\vert \, & 
    \gamma = \sum_{k\in \Z} a_k e^{k\pi J t} + \sum_{k\in 2\Z} b_{k} e^{k\pi J t},\\
& \, a_k\in V_0, \,b_{k} \in V_1,\\
& \sum_{k\in \Z} \vert a_k \vert^2 + \vert b_k \vert^2 < \infty \bigg \}
\end{split}
\end{equation*}
with inner product 
\begin{equation*}
    \left \langle \psi,\phi \right \rangle_{L^2_{n,k}} =  \left (
    \int_0^1 \langle \psi(t),\phi(t) \rangle \, dt \right)^{\frac{1}{2}}.
\end{equation*}
Define $H_{n,k}^{s}\left([0,1]\right)$ to be the space
\begin{equation*}
H_{n,k}^{s}([0,1]) = \left\lbrace x\in L^2_{n,k} \left([0,1]\right)\: \bigg\vert
\;
\sum_{k\in \Z} |k|^{2s}|z_k|^2 < \infty \right\rbrace
\end{equation*}
where $z_{k} \in V_0$ for odd $k$ and $z_k \in V_0 \oplus V_1$ for even $k$.
\end{defn}

In the following lemmas, we collect several standard results from
\cite{Hofer_Zehnder_1994} concerning 
the spaces 
$H^s_{n,k}([0,1])$. The proofs are identical to those in \cite{Hofer_Zehnder_1994}, 
replacing the spaces considered there with
the corresponding spaces in our setting. For the convenience of the reader, we have tried to keep our notation
compatible with the notation of \cite{Hofer_Zehnder_1994}*{Sections 3.3, 3.4}.
One notable change is that we use $X$ to denote the appropriate
$H^{\frac{1}{2}}$ Hilbert space, which is denoted by $E$ in
\cite{Hofer_Zehnder_1994}. Some of the more immediate results are stated without proof. 
\begin{defn} \label{def:X Hilbert space with norm}
    Denote by \[
    X = H^{1/2}_{n,k}\left ( [0,1] \right ). \]

    For $\gamma \in X$, we have 
    \[
    \gamma = \sum_{k\in \Z} z_k e^{k\pi J t} 
   \]
   where
   $z_k \in V_0$ for odd $k$ and $z_k \in V_0 \oplus V_1$ for even $k$.

We take the norm on $X$ to be given by 
\[
    \| \gamma \| = |z_0|^2 + \frac{\pi}{2} \sum_{k\in \Z} |k|\vert z_k \vert^2.
\]
\end{defn}

\begin{lem}
\label{lem:Hilbert space and compact embedding}
For each $s\geq 0$, $H^{s}_{n,k}([0,1])$ is a Hilbert space with the inner product
\begin{equation*}
\langle \phi,\psi \rangle_{(s,n,k)} = \langle a_0,a'_0 \rangle + \frac{\pi}{2} \sum_{k\neq 0} 
|k|^{2s} \left \langle  a_k,a'_k \right \rangle.
\end{equation*} 
Furthermore, if $s > t$, then
the inclusion of $H^{s}_{n,k}([0,1])$ into $H^{t}_{n,k}([0,1])$ is compact.

In particular, $(X, \| \cdot \| )$ is a Hilbert space.
\end{lem}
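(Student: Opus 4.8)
The plan is to imitate the classical argument from \cite{Hofer_Zehnder_1994}*{Section 3.3} verbatim, using the explicit Fourier description of $H^s_{n,k}$ provided by Lemma \ref{lem:Fourier decomposition} and the definition of the $H^s_{n,k}$ norm. First I would verify completeness: an element of $H^s_{n,k}$ is specified by its Fourier coefficients $(z_k)_{k\in\Z}$, subject to the linear constraints $z_k\in V_0$ for $k$ odd and $z_k\in V_0\oplus V_1$ for $k$ even, together with the summability condition $\sum_k |k|^{2s}|z_k|^2<\infty$ (with the $k=0$ term interpreted via $|a_0|^2$). The stated bilinear form $\langle\phi,\psi\rangle_{(s,n,k)}=\langle a_0,a_0'\rangle+\frac\pi2\sum_{k\neq0}|k|^{2s}\langle a_k,a_k'\rangle$ is, up to the harmless weight $\pi/2$ coming from integrating $e^{k\pi Jt}$ over $[0,1]$, just the weighted $\ell^2$ inner product on the sequence of coefficients. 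Since the constraints $z_k\in V_0$ and $z_k\in V_0\oplus V_1$ cut out \emph{closed} linear subspaces of $\R^{2n}$, the space $H^s_{n,k}$ is a closed subspace of the weighted sequence Hilbert space $\bigoplus_k \R^{2n}$ with weights $\max(1,|k|)^{2s}$, hence itself a Hilbert space; positive-definiteness and symmetry of the form are immediate, and the only point to note is that each summand $e^{k\pi Jt}$ has $L^2([0,1])$-norm a fixed constant, which is why the cross terms vanish (orthogonality of the $e^{k\pi Jt}$ on $[0,1]$, exactly as in the half-period Fourier setup of Lemma \ref{lem:Fourier decomposition}).

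Next I would prove the compact embedding $H^s_{n,k}\hookrightarrow H^t_{n,k}$ for $s>t$. Writing $\iota$ for the inclusion, on Fourier coefficients $\iota$ acts as the diagonal operator $(z_k)\mapsto(z_k)$, which from the $H^s_{n,k}$-orthonormal basis to the $H^t_{n,k}$-norm is multiplication of the $k$-th block by the scalar $|k|^{t-s}\to 0$ as $|k|\to\infty$ (the $k=0$ block is finite-dimensional and irrelevant). Thus $\iota$ is a norm-limit of finite-rank operators — truncate to $|k|\le M$ — with error bounded by $\sup_{|k|>M}|k|^{t-s}=(M+1)^{t-s}\to0$; a norm-limit of finite-rank operators between Hilbert spaces is compact, which gives the claim. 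Equivalently one invokes the standard fact that a diagonal operator on $\ell^2$ with entries tending to $0$ is compact.

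The main obstacle — and it is really only a bookkeeping obstacle rather than a conceptual one — is making sure the constrained Fourier description of $H^s_{n,k}$ is handled correctly so that the "closed subspace of a weighted $\ell^2$" picture is rigorous: one must check that the map sending $\gamma$ to its coefficient sequence $(a_k)\oplus(b_k)$ is a well-defined isometry onto its image with the stated inner products (this is where the normalization $\pi/2$ and the orthogonality relations hinted at in the commented-out block of Lemma \ref{lem:Fourier decomposition} enter), and that imposing $b_k=0$ for $k$ odd together with $a_k\in V_0$, $b_k\in V_1$ defines a closed condition stable under $H^s$-limits. Since $V_0$ and $V_1$ are finite-dimensional coordinate subspaces, all of these constraints are closed, so no difficulty arises. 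I would therefore present this lemma briefly, citing \cite{Hofer_Zehnder_1994}*{Section 3.3} for the parts that are word-for-word identical and only remarking on the one change — the half-integer frequencies $k\pi$ and the linear constraints $V_0,V_1$ — that distinguishes our setting from theirs.
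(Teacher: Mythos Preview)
Your proposal is correct and, for the compactness statement, essentially identical to the paper's proof: both truncate to $|k|\le N$ and observe that $\|\iota - P_N\|\le C N^{t-s}\to 0$, so the inclusion is a norm-limit of finite-rank operators.

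For the Hilbert space statement there is a minor difference in presentation. You argue directly that the Fourier-coefficient constraints $z_k\in V_0$ (odd $k$) and $z_k\in V_0\oplus V_1$ (even $k$) are closed linear conditions, making $H^s_{n,k}$ a closed subspace of a weighted $\ell^2$. The paper instead invokes the involution $c_{n,k}$ already introduced in Lemma~\ref{lem:Fourier decomposition}: acting coefficient-wise it gives an isometry of $H^s(S^1,\R^{2n})$, and $H^s_{n,k}$ is its $+1$ eigenspace, hence closed. The two arguments are equivalent --- the eigenspace condition is exactly your coordinate constraint --- but the paper's phrasing reuses the involution machinery and avoids rechecking the orthogonality and normalization bookkeeping you flag as the ``main obstacle.'' Your version is slightly more self-contained; the paper's is slightly more conceptual.
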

\begin{proof}
Recall that $H^{s}(S^1, \R^{2n})$ is a Hilbert space. The involution on $\R^{2n}$ given by 
\[
\begin{split}
(x_1, \dots, x_n, y_1, \dots, &y_k, y_{k+1}, \dots, y_n) \mapsto \\
 &(x_1, \dots, x_n, y_1, \dots, y_k, -y_{k+1}, \dots, -y_n)
\end{split}
\]
induces an isometry on $H^{s}(S^1, \R^{2n})$ by acting on each Fourier coefficient. 
Observe now that $H^{s}_{n,k}([0,1])$ can be identified with the $+1$ eigenspace of this operator, and thus identifies
$H^{s}_{n,k}([0,1])$ as a closed subspace of a Hilbert space.
	
The compactness of the inclusion follows by considering the finite rank truncation operators 
\[
P_N \colon  \sum_{k} z_k \e^{k \pi J t} \mapsto \sum_{|k| \le N} z_k \e^{k \pi J t}.
\]
Let $\imath$ denote the inclusion $\imath \colon H^{s}_{n,k} \to H^t_{n,k}$. Then, in the operator norm
for $\imath$, $P_N \colon H^{s}_{n,k} \to H^t_{n,k}$, $ || P_N - \imath || \le C N^{t-s}$, and thus the inclusion is the uniform limit of finite rank operators, and is thus compact.
\end{proof}

\begin{lem}
Let $s > t$. If $j \colon H^{s}_{n,k}([0,1]) \to H^{t}_{n,k}([0,1])$ is the inclusion
operator, then the Hilbert space adjoint $j^{*}:H^{t}_{n,k}([0,1]) \to
H^{s}_{n,k}([0,1])$ is compact.
\qed
\end{lem}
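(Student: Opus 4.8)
The plan is to obtain this as an immediate consequence of the compactness of the inclusion $j$ proved in Lemma~\ref{lem:Hilbert space and compact embedding}, together with the standard fact that the Hilbert-space adjoint of a compact operator is again compact. Recall the short argument for the latter: if $K$ is compact it is the operator-norm limit of finite-rank operators $K_N$; then each $K_N^{*}$ is finite-rank, and since taking adjoints is an isometry for the operator norm, $K_N^{*} \to K^{*}$ in norm, so $K^{*}$ is itself a norm-limit of finite-rank operators and hence compact. Applying this with $K = j$ yields that $j^{*}$ is compact.

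Alternatively, and perhaps more transparently in the present concrete setting, I would simply compute $j^{*}$. Writing elements of $H^{s}_{n,k}([0,1])$ as $\sum_{m \in \Z} z_m \e^{m\pi J t}$ and using the two weighted inner products from Lemma~\ref{lem:Hilbert space and compact embedding} (the $|m|^{2s}$-weighted one on the source and the $|m|^{2t}$-weighted one on the target), the defining identity $\langle j x, y \rangle_{(t,n,k)} = \langle x, j^{*} y \rangle_{(s,n,k)}$ forces, for $y = \sum_m y_m \e^{m\pi J t} \in H^{t}_{n,k}$, the Fourier coefficients of $j^{*} y$ to be $y_0$ in degree $0$ and $|m|^{2(t-s)} y_m$ in degree $m \ne 0$. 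Thus $j^{*}$ is the diagonal operator with eigenvalues $|m|^{2(t-s)}$, which tend to $0$ as $|m| \to \infty$ because $t - s < 0$; such an operator is the norm-limit of its finite-rank truncations $P_N \circ j^{*}$ (with $P_N$ as in the proof of the previous lemma), since $\|P_N \circ j^{*} - j^{*}\| \le C N^{t-s} \to 0$, and is therefore compact.

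There is essentially no serious obstacle here; the one point requiring care is to keep the two inner products straight when verifying the formula for $j^{*}$, since it is precisely the mismatch between the weights $|m|^{2s}$ and $|m|^{2t}$ that produces the decaying eigenvalues. Everything else is routine Hilbert-space theory, exactly as in \cite{Hofer_Zehnder_1994}.
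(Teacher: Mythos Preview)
Your proposal is correct and matches the paper's treatment: the paper states this lemma without proof, remarking only that it is a standard result whose proof is identical to that in \cite{Hofer_Zehnder_1994}, and both of your arguments (adjoint of a compact operator, or explicit diagonalization with eigenvalues $|m|^{2(t-s)} \to 0$) are exactly the standard ones one would give there.
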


\begin{lem}
\label{lem:Smoothness}
If $x \in H^{s}_{n,k}([0,1])$ for $s > \frac{1}{2} + r$, where $r$ is an integer,
then $x\in C_{n,k}^{r}([0,1])$.
\qed
\end{lem}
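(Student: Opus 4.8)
The statement is the relative analogue of the standard Sobolev embedding $H^s(S^1) \hookrightarrow C^r(S^1)$ for $s > 1/2 + r$, and the strategy is to reduce to that standard fact. Recall from Lemma~\ref{lem:Hilbert space and compact embedding} that $H^s_{n,k}([0,1])$ is identified with the $+1$-eigenspace of the isometric involution on $H^s(S^1, \R^{2n})$ induced by $(x,y_1,\dots,y_k,y_{k+1},\dots,y_n) \mapsto (x,y_1,\dots,y_k,-y_{k+1},\dots,-y_n)$, where we regard an element of $H^s_{n,k}$ as a function on $S^1$ via its Fourier series $\sum_k z_k \e^{k\pi J t}$ with $t$ running over the circle $\R/2\Z$. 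In particular, any $x \in H^s_{n,k}([0,1])$ is the restriction to $[0,1]$ of an element $\tilde x \in H^s(S^1, \R^{2n})$ (with $S^1 = \R/2\Z$).

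First I would invoke the classical Sobolev embedding: if $s > 1/2 + r$, then $H^s(S^1,\R^{2n})$ embeds continuously into $C^r(S^1, \R^{2n})$. Concretely, for a Fourier series $\sum_k c_k \e^{k \pi J t}$ with $\sum_k |k|^{2s}|c_k|^2 < \infty$, the termwise $r$-fold derivative has coefficients of size $|k|^r |c_k|$, and
\[
\sum_{k \neq 0} |k|^r |c_k| \;\le\; \Big( \sum_{k \neq 0} |k|^{2r - 2s} \Big)^{1/2} \Big( \sum_{k \neq 0} |k|^{2s} |c_k|^2 \Big)^{1/2},
\]
where the first factor on the right is finite precisely because $2s - 2r > 1$. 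Hence the differentiated series converges absolutely and uniformly, so $\tilde x \in C^r(S^1,\R^{2n})$, with a bound on the $C^r$ norm in terms of the $H^s$ norm. Restricting to $[0,1]$ gives $x \in C^r([0,1],\R^{2n})$.

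It then remains to check the boundary/leaf condition, i.e.\ that $x \in C^r_{n,k}([0,1])$ in the sense of the space $C^\infty_{n,k}$ (now with $C^r$ regularity): namely $x(0), x(1)$ lie in a common leaf $F = z + V_0$ of the characteristic foliation of $\R^{n,k}$. Since $x$ is now known to be continuous, this is exactly the content of the Remark following Lemma~\ref{lem:Fourier decomposition}: an element of the stated Fourier form that is $C^0$ automatically satisfies $x(0), x(1) \in F$. Indeed one reads it off directly — with $x(t) = \sum_k a_k \e^{k\pi J t} + \sum_{k \in 2\Z} b_k \e^{k\pi J t}$, $a_k \in V_0$, $b_k \in V_1$, at $t = 0$ and $t = 1$ the exponentials $\e^{k\pi J t}$ with $k$ even are the identity (contributing the same point of $V_0 \oplus V_1$ at both endpoints) while those with $k$ odd equal $\pm\Id$ and act within $V_0$, so $x(1) - x(0) \in V_0$ and $x(0), x(1)$ lie in the common leaf through the even part. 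This uses that $V_0$, $V_1$ are $J$-invariant, $V_0 \subset \R^{n,k}$, and $V_1 \subset \R^{n,k}$.

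\textbf{Main obstacle.} There is no deep obstacle; the only points requiring care are bookkeeping ones. First, one must be consistent about the period: the natural circle here is $\R/2\Z$ (so that $\e^{k\pi J t}$ is $2$-periodic), and the constant in the Sobolev inequality and the indexing of Fourier modes must match the normalization of the inner product $\langle\cdot,\cdot\rangle_{(s,n,k)}$ used in Lemma~\ref{lem:Hilbert space and compact embedding}. Second, one should note that absolute uniform convergence of the differentiated series is what licenses termwise differentiation and hence identifies the $H^s$ representative with an honest $C^r$ function rather than merely an equivalence class; this is the standard argument and carries over verbatim. Everything else is the restriction map and the already-established eigenspace identification, so the proof is short.
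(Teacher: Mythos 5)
Your proof is correct and is exactly the standard Fourier--Cauchy--Schwarz argument that the paper defers to (the lemma is stated with proof omitted, as "identical to those in Hofer--Zehnder"): absolute and uniform convergence of the termwise $r$-fold differentiated series from $\sum_{k\neq 0}|k|^{2(r-s)}<\infty$ when $s>r+\tfrac12$. Your additional verification that $x(1)-x(0)\in V_0$ (since $\e^{k\pi J}=(-1)^k\Id$ and the odd-mode coefficients lie in $V_0$), so that the endpoints lie on a common leaf and $x$ genuinely belongs to $C^r_{n,k}$, is a point the paper leaves implicit and is handled correctly.
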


\begin{lem}
$j^{*}(L^2) \subset H^{1}$, and $\| j^*(y) \|_{H^1} \leq \|y\|_{L^2}$.
\qed
\end{lem}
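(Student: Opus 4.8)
The plan is to compute $j^{*}$ explicitly on the Fourier side, where everything diagonalizes. Here $j$ denotes the inclusion $j\colon H^{1/2}_{n,k}([0,1]) \hookrightarrow L^{2}_{n,k}([0,1]) = H^{0}_{n,k}([0,1])$ (this is the inclusion relevant to the variational problem, since $X = H^{1/2}_{n,k}$ is the space on which $\Phi$ is defined); it is bounded with norm $\le 1$ by the description of the two inner products in Lemma \ref{lem:Hilbert space and compact embedding}, so its Hilbert space adjoint $j^{*}\colon L^{2}_{n,k}([0,1]) \to H^{1/2}_{n,k}([0,1])$ is characterized by
\[
\langle j^{*}y, x\rangle_{(1/2,n,k)} = \langle y, jx\rangle_{L^{2}_{n,k}} = \langle y, x\rangle_{L^{2}_{n,k}} \qquad \text{for all } x \in H^{1/2}_{n,k}([0,1]).
\]
A priori $j^{*}y$ merely lies in $H^{1/2}_{n,k}$; the content of the lemma is that it in fact gains a full derivative and lies in the smaller space $H^{1}_{n,k}$, with the stated norm control.

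Expand $y = \sum_{\ell\in\Z} y_{\ell}\, \e^{\ell\pi J t}$ and $x = \sum_{\ell\in\Z} x_{\ell}\, \e^{\ell\pi J t}$ in the form of Lemma \ref{lem:Fourier decomposition}, so that $y_{\ell}, x_{\ell} \in V_{0}$ for $\ell$ odd and $y_{\ell}, x_{\ell} \in V_{0}\oplus V_{1}$ for $\ell$ even. By Lemma \ref{lem:Hilbert space and compact embedding} the inner products $\langle\cdot,\cdot\rangle_{L^{2}_{n,k}}$, $\langle\cdot,\cdot\rangle_{(1/2,n,k)}$ and $\langle\cdot,\cdot\rangle_{(1,n,k)}$ are all diagonal in this basis, the weight attached to a nonzero mode $\ell$ being $\frac{\pi}{2}$, $\frac{\pi}{2}|\ell|$ and $\frac{\pi}{2}|\ell|^{2}$ respectively, while the weight on the zero mode is $1$ in all three. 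Writing $z = j^{*}y = \sum_{\ell} z_{\ell}\, \e^{\ell\pi J t}$ and letting $x$ range over each individual Fourier mode in the defining identity above, we read off $z_{0} = y_{0}$ and $z_{\ell} = |\ell|^{-1} y_{\ell}$ for $\ell \neq 0$. This assignment keeps $z_{\ell}$ in the same subspace ($V_{0}$, resp.\ $V_{0}\oplus V_{1}$) as $y_{\ell}$, so $z$ has the admissible form, and
\[
\|j^{*}y\|_{(1,n,k)}^{2} = |z_{0}|^{2} + \frac{\pi}{2}\sum_{\ell\neq0}|\ell|^{2}|z_{\ell}|^{2} = |y_{0}|^{2} + \frac{\pi}{2}\sum_{\ell\neq0}|y_{\ell}|^{2} = \|y\|_{L^{2}_{n,k}}^{2}.
\]
In particular the right-hand side is finite, so $j^{*}y \in H^{1}_{n,k}([0,1])$, i.e.\ $j^{*}(L^{2}) \subset H^{1}$; and $\|j^{*}y\|_{H^{1}} = \|y\|_{L^{2}}$, which in particular yields the claimed inequality $\|j^{*}y\|_{H^{1}} \le \|y\|_{L^{2}}$.

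There is no real obstacle here: the argument is the verbatim analogue of the corresponding computation in \cite{Hofer_Zehnder_1994}*{Section 3.3}, the operator $j^{*}$ being nothing but multiplication by $|\ell|^{-1}$ on the nonzero Fourier modes and the identity on the zero mode, which carries $\ell^{2}$-summable coefficient sequences to sequences summable against the $H^{1}$ weights $|\ell|^{2}$. The only points requiring a little care are of a bookkeeping nature: first, the precise normalization constants in the pairing of the modes $\e^{\ell\pi J t}$ over $[0,1]$ --- these modes are not literally orthonormal, but the real-linear combinations actually occurring in $L^{2}_{n,k}$, $H^{1/2}_{n,k}$ and $H^{1}_{n,k}$ are mutually orthogonal, which is the origin of the factors $\frac{\pi}{2}$ and is dealt with exactly as in \cite{Hofer_Zehnder_1994}; and second, the presence of the two families of modes (those valued in $V_{0}$ and those valued in $V_{0}\oplus V_{1}$), which presents no difficulty since $j^{*}$ scales each mode and therefore preserves both families.
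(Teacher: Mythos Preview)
Your argument is correct in strategy and coincides with the (omitted) proof the paper has in mind, namely the direct Fourier computation from \cite{Hofer_Zehnder_1994}*{Section~3.3}; the paper states this lemma without proof for precisely that reason.

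One small bookkeeping correction: the $L^{2}_{n,k}$ inner product is $\int_0^1\langle\psi,\phi\rangle\,dt$, which by Lemma~\ref{lem:Pseudo-orthogonality} (and its obvious extension to the $V_1$-components, which occur only at even frequencies) assigns weight $1$, not $\tfrac{\pi}{2}$, to each nonzero mode. With this correction the defining identity gives $z_\ell=\tfrac{2}{\pi|\ell|}\,y_\ell$ for $\ell\neq0$, and hence
\[
\|j^{*}y\|_{(1,n,k)}^{2}=|y_0|^{2}+\frac{2}{\pi}\sum_{\ell\neq0}|y_\ell|^{2}\le|y_0|^{2}+\sum_{\ell\neq0}|y_\ell|^{2}=\|y\|_{L^{2}}^{2}.
\]
So the asserted \emph{inequality} holds, but your claimed equality $\|j^{*}y\|_{H^1}=\|y\|_{L^2}$ does not (it fails whenever $y$ has a nonconstant part). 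This slip does not affect the validity of the lemma or of your overall argument.
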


\begin{defn} \label{def:projections}
The Hilbert space $X = H^{1/2}_{n,k}([0,1])$ admits a decomposition into negative, zero and positive Fourier frequencies:
\begin{align*}
X^-  &= \left \{ x \in H^{1/2}_{n,k}([0,1]) \, | \, x = \sum_{k < 0} x_k \e^{i \pi k t} \right \} \\
X^0 &= \left \{ x \in H^{1/2}_{n,k}([0,1]) \, | \, x = x_0 \in \R^{n,k} \right \} \\
X^+ &= \left \{ x \in H^{1/2}_{n,k}([0,1]) \, | \, x = \sum_{k > 0} x_k \e^{i \pi k t} \right \}
\end{align*}
Let $P^-, P^0$ and $P^+$ denote the orthogonal projections onto each of these subspaces, and we denote $x^{\pm} \coloneqq P^{\pm}(x)$ and $x^0 \coloneqq P^0(x)$.
\end{defn}

\subsection{An extended Hamiltonian}

Given a simple Hamiltonian $H\colon U \to \R$ with $m(H) > \frac{\pi}{2}$, 
we will analyze an associated Hamiltonian $\bar{H} \colon \R^{2n} \to \R$, and find a solution of $\dot{x}=X_{\bar{H}}(x)$ which is also a non-trivial solution of 
$\dot{x} = X_{H}(x)$. In the following, we construct 
the Hamiltonian $\bar{H}$.

We consider $n, k$ fixed and the simple Hamiltonian $H$ with $m(H) > \frac{\pi}{2}$ fixed.

\begin{defn}We now set some notation. 
    \label{def:qPi}

\begin{enumerate}
\item $\R^{2n}_{+} \ceq \{ z \in \R^{2n} \vert y_n > 0\}, R^{2n}_{-} \ceq
\{z \in \R^{2n} \vert y_n < 0\}$,

\item $U_\pm \coloneqq U \cap \R^{2n}_\pm$.

\item Let $q:\R^{2n}\to \R$ be the quadratic function
\[
q(x) = \,  \left(x_n^2 + y_n^2\right) +
 \frac{1}{N^2}\sum_{i=k+1}^{n-1}\left( x_i^2 +
y_i^2 \right) + \frac{2}{N^2}\sum_{i=1}^{k} (x_i^2 + y_i^2).
\]

Let $q_2:\R^{2n} \to \R$ be defined by
\[
q_2(x) = 
\begin{cases} 
	x_n^2 + y_n^2 & \text{for } y \ge 0 \\
	x_n^2 	        & \text{for } y < 0
\end{cases}
\]
and $q_{2n-2}:\R^{2n}\to \R$ be given by 
\[
    q_{2n-2}(x) = \frac{1}{N^2}\sum_{i=k+1}^{n-1}\left( x_i^2 + y_i^2 \right) + \frac{2}{N^2}\sum_{i=1}^{k} (x_i^2 + y_i^2).
\]

Define now 
\[q_\Pi(x) = q_2(x) + q_{2n-2}(x).\]

Choose $N$ sufficiently large so that 
\[
\supp dH \subset q_\Pi^{-1}( [0, 1)).
\]

Observe that $q_\Pi$ is a $C^1$ function with a jump discontinuity it its second
derivative.

\end{enumerate}
\end{defn}

Now, given a small $\epsilon> 0$ such that $\frac{\pi}{2}+\epsilon < m(H)$, we 
define $f:\R\to \R$ to be a function such that
\begin{align*}
&f(r) = m(H) \text{ for } r \leq 1 \\
&f(r) \geq \left (\frac{\pi}{2} + \epsilon \right )r \text{ for all } r\in \R \\
&f(r) = \left (\frac{\pi}{2} + \epsilon \right )r \text{ for } r \text{ large}\\
&0 < f'(r) \leq \left (\frac{\pi}{2} + \epsilon \right) \text{ for } r > 1.
\end{align*}
We define the extended Hamiltonian $\bar{H}$ by
\begin{equation}
\label{def:H extension}
\bar{H}(x) = \begin{cases}
H(x) & \text{ if } q_\Pi(x) \le 1\\
f( q_\Pi (x))  & \text{ if } q_\Pi(x) > 1.
\end{cases}
\end{equation}
In the next lemma, we give a criterion to show that certain orbits of the
Hamiltonian $\bar{H}$ are actually orbits of $H$.

\begin{lem} \label{lem:real solution}
Suppose $x(t), t\in [0,1]$ is a solution of $\dot{x} = X_{\bar{H}}$ 
such that 
$x(0),x(1) \in
\R^{n,k}$. If $\Phi_{\bar{H}}(x) > 0$, then $x(t)$ is 
non-constant and $x(t)$ is an orbit of $H$. 
\end{lem}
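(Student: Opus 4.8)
The plan is to show that a solution $x(t)$ of $\dot x = X_{\bar H}$ with endpoints in $\R^{n+k}$ and positive action must in fact stay inside the region $\{q_\Pi \le 1\}$, where $\bar H$ coincides with $H$, and then to rule out the constant case. The first step is to split the analysis according to whether $x(t)$ ever leaves $\{q_\Pi \le 1\}$. Suppose it does, so that on some subinterval $q_\Pi(x(t)) > 1$ and there $\bar H(x) = f(q_\Pi(x))$. Because $q_\Pi$ is a positive-definite quadratic form in the first $2n-2$ coordinates (it is independent of $y_n$, being built from $\Pi$), its Hamiltonian flow is linear: $X_{q_\Pi}$ rotates in the coordinate planes $(x_i,y_i)$ for $i \le n-1$ with frequencies determined by the coefficients $1, 1/N^2, 2/N^2$, and acts trivially on $(x_n, y_n)$. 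Hence $X_{\bar H} = f'(q_\Pi) X_{q_\Pi}$ on that region: $q_\Pi$ is conserved along the orbit, and the orbit is a reparametrized linear rotation. I would use this to compute the action $\Phi_{\bar H}(x)$ — or at least its sign — directly. For a loop or chord lying in the region $q_\Pi > 1$, the kinetic term $\tfrac12 \int \langle -J\dot x, x\rangle$ is controlled by the rotation, and since $0 < f'(r) \le \tfrac\pi2 + \epsilon$ while $f(r) \ge (\tfrac\pi2+\epsilon) r$, a convexity/estimate argument (exactly as in Hofer–Zehnder) forces $\Phi_{\bar H}(x) \le 0$ there, contradicting $\Phi_{\bar H}(x) > 0$. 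The subtlety is that $x$ need not lie entirely in one region; one must patch the estimate across the interfaces $q_\Pi = 1$, using that $\bar H = H = m(H)$ on $\supp dH$'s complement intersected with $\{q_\Pi \le 1\}$... this bookkeeping is where I expect to spend most effort.

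More precisely, the cleanest route is: on $\{q_\Pi(x) > 1\}$ the Hamiltonian $\bar H$ depends only on the $\Pi$-coordinates, so the $(x_n, y_n)$ component of $x(t)$ is \emph{constant} there. On $\{q_\Pi \le 1\}$, $\bar H = H$ and by the definition of a simple Hamiltonian $H$ is locally constant outside a compact set $K$; in particular outside $q_\Pi^{-1}([0,1))$ we have $dH = 0$ anyway, consistent with the gluing. I would then argue that the full orbit $x(t)$ has its $(x_n,y_n)$-projection moving only while $q_\Pi(x(t)) \le 1$, i.e. while $x(t)$ is in the region where $\bar H = H$. Now, the key computation: write $\Phi_{\bar H}(x) = \tfrac12\int_0^1\langle -J\dot x, x\rangle\,dt - \int_0^1 \bar H(x)\,dt$. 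Using $\dot x = X_{\bar H}(x)$ and Euler's identity for the quadratic-plus-general structure, $\langle -J\dot x, x\rangle = \langle \nabla \bar H(x), x\rangle$, so $\Phi_{\bar H}(x) = \int_0^1 \big(\tfrac12\langle \nabla\bar H(x),x\rangle - \bar H(x)\big)\,dt$. On the region $q_\Pi > 1$ the integrand is $\tfrac12 f'(q_\Pi)\langle \nabla q_\Pi, x\rangle - f(q_\Pi)$; but $\langle \nabla q_\Pi(x), x\rangle = 2 q_\Pi(\Pi x) \le 2 q_\Pi(x)$ wait—one must be careful: $\langle \nabla q_\Pi(x), x\rangle = 2 q_\Pi(x)$ since $q_\Pi$ is quadratic (homogeneous degree $2$) even though it ignores $y_n$. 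Hence the integrand is $f'(q_\Pi) q_\Pi - f(q_\Pi) \le (\tfrac\pi2+\epsilon)q_\Pi - (\tfrac\pi2+\epsilon)q_\Pi = 0$ using $0 < f' \le \tfrac\pi2+\epsilon$ and $f(r) \ge (\tfrac\pi2+\epsilon)r$. On the region $q_\Pi \le 1$ we have $\bar H = H$ with $0 \le H \le m(H)$, and there the integrand $\tfrac12\langle \nabla H, x\rangle - H$ need \emph{not} be $\le 0$ pointwise, so this naive bound alone does not finish the job — which is exactly the ``new subtlety'' the introduction warns about.

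To handle the region $q_\Pi \le 1$ I would instead follow the Hofer–Zehnder strategy of exploiting the return-time/period structure together with the boundary condition. The point is that $H$ itself is \emph{simple}, not admissible — $\bar H$ is the one whose orbits we produced — so one invokes the specific geometry: the orbit spends time in $q_\Pi \le 1$, and on exiting into $q_\Pi > 1$ its $(x_n, y_n)$-coordinates freeze. If the orbit never enters the interior region where $dH \ne 0$, then $x(t)$ solves $\dot x = f'(q_\Pi) X_{q_\Pi}$ throughout, $q_\Pi$ is globally conserved, the whole chord is a rotation in the first $2n-2$ coordinates at frequency $\le \tfrac\pi2 + \epsilon$ per unit time; since the frequencies $\pi(\tfrac\pi2+\epsilon)$-ish are all strictly less than $\pi$ except possibly the leading one which is $\le \tfrac\pi2+\epsilon < \pi$, no nontrivial return to the isotropic leaf $F$ can occur in time $1$ unless the orbit is constant, and the constant orbit has action $-\bar H(x_0) \le 0$, contradiction. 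So $\Phi_{\bar H}(x) > 0$ forces the orbit to enter $\{q_\Pi \le 1, \, dH \ne 0\}$; there $\bar H = H$, and by the conservation argument ($q_\Pi$ conserved on the complementary region, $(x_n,y_n)$ frozen outside) one shows the orbit in fact \emph{stays} in $\{q_\Pi \le 1\}$ — otherwise it would have to be constant in $(x_n,y_n)$ on a full neighborhood and the action estimate above kicks in again. Once $x(t)$ lies entirely in $\{q_\Pi \le 1\}$ we have $\bar H(x(t)) = H(x(t))$ and $X_{\bar H}(x(t)) = X_H(x(t))$ for all $t$, so $x$ is genuinely an orbit of $H$; non-constancy is immediate since a constant orbit has non-positive action. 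The main obstacle, as flagged, is precisely the careful patching argument showing the orbit cannot straddle the interface $q_\Pi = 1$ while keeping positive action — managing the reparametrization $f'(q_\Pi)$ and the freezing of $(x_n,y_n)$ simultaneously, which is the genuinely new ingredient compared to the classical Hofer–Zehnder extension lemma.
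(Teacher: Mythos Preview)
Your central computation is right and matches the paper's: on $\{q_\Pi > 1\}$ one has $\bar H = f(q_\Pi)$, Euler's identity gives $\langle \nabla q_\Pi(x), x\rangle = 2 q_\Pi(x)$, and therefore the action integrand equals $f'(q_\Pi)\,q_\Pi - f(q_\Pi) \le 0$, so any chord lying entirely in that region has $\Phi_{\bar H}(x) \le 0$. The non-constancy claim is also correct for the reason you give.

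There are, however, two genuine problems with the surrounding argument. First, you misread $q_\Pi$: it is \emph{not} independent of $(x_n, y_n)$. Under either of the paper's (somewhat inconsistently stated) definitions, $q_\Pi$ always contains the term $x_n^2$, and in the piecewise formula $q_\Pi = q_2(x_n,y_n) + q_{2n-2}$ it also depends on $y_n$ when $y_n \ge 0$. Hence $X_{q_\Pi}$ acts nontrivially on the $(x_n,y_n)$-plane, and your claim that these coordinates are ``frozen'' on $\{q_\Pi > 1\}$ is false. The return-time and patching arguments you build on this freezing are therefore not valid.

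Second, and more to the point, the ``patching across $q_\Pi = 1$'' that you flag as the main obstacle simply does not occur. You already observe that $q_\Pi$ is conserved on $\{q_\Pi > 1\}$; this immediately makes $\{q_\Pi > 1\}$ invariant under the flow of $\bar H$, so a chord that lies in it at any instant lies in it for all $t \in [0,1]$. There are thus only two cases: either $q_\Pi(x(t)) > 1$ for all $t$, and your own action estimate gives $\Phi_{\bar H}(x) \le 0$, contradicting the hypothesis; or $q_\Pi(x(t)) \le 1$ for all $t$, where $\bar H \equiv H$, so $x$ is an orbit of $H$. No interface bookkeeping, no return-time analysis, no freezing. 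This is exactly how the paper proceeds, after a short preliminary step confining the chord to $\bar{\R}^{2n}_+ \cup S$; that step lets the paper run the computation with $q$ rather than $q_\Pi$, but if you compute with $q_\Pi$ as you do, it is not needed.
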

\begin{proof}
Let the functional $\Phi_{\bar{H}}\colon C_{n,k}^{\infty}\left([0,1]\right) \to \R$ be defined by Equation \ref{eq:Phi_H}. Note first that if $x$ is constant, then $\Phi_{\bar{H}}(x) \leq 0$, 
since $\bar{H} \geq 0$. 

To show the orbit of $\bar H$ is an orbit of $H$, we will show that $q_\Pi \le 1$ at each point of the orbit.
We will show instead that a chord $x(t)$ of $\bar H$ for which there exists a time
at which $q_\Pi(x(t)) > 1$ must have negative action.

Let $x(t)$ be such a trajectory, with $x(0), x(1) \in \R^{n,k}$ and with $q_\Pi(x(t)) > 1$ for some
time $t$. Notice that by construction, 
the region $\{ x \in \R^{2n} \, | \, q_\Pi(x) > 1 \}$ is flow invariant. 
Thus, the trajectory $x(t)$ has $q_\Pi(x(t)) > 1$ for all time.

We will first argue that any such trajectory must lie in the upper half-space
$\{ (x_1, \dots, x_n, y_1, \dots, y_n) \, | \, y_n \ge 0 \}$.
Indeed, since $q_\Pi(x(t)) > 1$, we have that the Hamiltonian vector field is
explicitly given by 
\[
    \dot x(t) = f'( q_\Pi(x(t))) J \nabla q_\Pi(x(t)).
\]
For all times $t$ at which $y_n < 0$, we have
\[
    \dot x_n(t) = 0 \qquad \dot y_n(t) = 2f'(q_\Pi(x(t))) x_n.
\]
In particular, $x_n$ is constant and $y_n$ is either monotone non-increasing or
monotone non-decreasing, depending on the sign of $x_n$. In particular then, it
is impossible for both $y_n(0) = 0$ and $y_n(1) = 0$ if there is a time $0 < t <
1$ at which $y_n(t) < 0$.  
The claim that the chord must lie in the upper half-space now follows.

Now, observe that on the upper half-space, we have $q_\Pi(x) = q(x)$, and hence
the Hamiltonian vector field on $\R^{2n}_+\backslash U_+$ is given by $X_{\bar H} = f'(q(x)) J \nabla q(x)$,
and thus $q(x)$ is an integral of motion in this region. 
It follows that $q(x(t)) = \tau > 1$ for all $t \in [0,1]$.  
Also notice that since $q(x)$ is
quadratic, we have $\langle x, \nabla q(x) \rangle = 2 q(x)$.
From this, we obtain:
\begin{align*}
\Phi_{\bar{H}}(x) &= \int_0^1 -\frac{1}{2} \langle J\dot{x}, x \rangle 
- \bar{H}(x(t)) \, dt \\
&= \int_0^1 \frac{1}{2}f'(q(x(t))) \langle \nabla q(x),x \rangle - f(\tau) \, dt \\
&= \int_0^1 f'(\tau) q(x(t))- f(\tau) \, dt\\
&= f'(\tau)\tau- f(\tau)\\
&\leq 0
\end{align*}
which completes the proof.
\end{proof}
\subsection{The action functional}
\begin{defn} \label{def:aa}
For $\phi,\psi  \in C_{n,k}^{\infty}([0,1])$, we define
\begin{align*}
a(\phi,\psi) =\frac{1}{2}  \int_0^1 \langle -J\dot{\phi},\psi \rangle \, dt.
\end{align*}
\end{defn}

We show the following simple lemma.
\begin{lem}
\label{lem:Pseudo-orthogonality}
For any $e_i = (0,\dots,0,1,0,\dots,0) \in \R^{2n}$, $i \in \{1,\dots,2n\}$,
\[
\int_0^1 \left \langle e^{k \pi J t}e_i, e^{l \pi J t}e_i \right \rangle \, dt =
\delta_{kl}
\]
\end{lem}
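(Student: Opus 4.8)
The statement to prove is Lemma \ref{lem:Pseudo-orthogonality}:
\[
\int_0^1 \left \langle e^{k \pi J t}e_i, e^{l \pi J t}e_i \right \rangle \, dt = \delta_{kl}.
\]

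This is a computation. The plan is to reduce the $\R^{2n}$ inner product to a planar one. Since $e_i$ is a single standard basis vector with $i \le n$, the vector $e^{k\pi J t}e_i$ lives entirely in the $x_i, y_i$-coordinate plane: writing $\C$ for that plane via $J$, we have $e^{k\pi J t}e_i = e^{k\pi i t} \cdot 1$ under the identification $e_i \leftrightarrow 1 \in \C$. Then $\langle e^{k\pi J t}e_i, e^{l\pi J t}e_i\rangle$ is the real part of $e^{k\pi i t} \overline{e^{l\pi i t}} = e^{(k-l)\pi i t}$, since $J$ is an orthogonal transformation preserving the standard inner product on that plane.

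So the integral becomes $\int_0^1 \operatorname{Re}\, e^{(k-l)\pi i t}\, dt = \int_0^1 \cos((k-l)\pi t)\, dt$. When $k = l$ this is $\int_0^1 1\, dt = 1$. When $k \ne l$, set $m = k - l \in \Z \setminus \{0\}$, and $\int_0^1 \cos(m\pi t)\, dt = \frac{1}{m\pi}\sin(m\pi t)\big|_0^1 = \frac{\sin(m\pi)}{m\pi} = 0$ since $m$ is an integer. This gives exactly $\delta_{kl}$.

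I don't expect any real obstacle here — the only point requiring a word of care is the observation that because $e_i$ is a single basis vector (not a general element of $\R^{2n}$), the orbit $t \mapsto e^{k\pi J t}e_i$ stays in a single $J$-invariant coordinate 2-plane, on which $e^{k\pi J t}$ acts as rotation by angle $k\pi t$; this is what lets us replace the $2n$-dimensional inner product by the $2$-dimensional one and invoke the elementary trigonometric integral. One could alternatively phrase the whole thing intrinsically: $\langle e^{k\pi J t}u, e^{l\pi J t}u\rangle = \langle e^{(k-l)\pi J t}u, u\rangle$ because $e^{sJ}$ is orthogonal, and then $\langle e^{(k-l)\pi J t}e_i, e_i\rangle = \cos((k-l)\pi t)$ since $\langle Je_i, e_i\rangle = 0$ and $|e_i| = 1$; integrating finishes it. Either route is a few lines.

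\begin{proof}
Since $e_i$ is a single standard basis vector with $i \le n$, the vector $e_i$ and its image $Je_i$ span a $J$-invariant coordinate $2$-plane, and $e^{k\pi J t}$ acts on this plane as the rotation by angle $k\pi t$. Because $e^{sJ}$ is an orthogonal transformation of $\R^{2n}$ for every $s \in \R$, we have
\[
\left\langle e^{k\pi J t}e_i, e^{l\pi J t}e_i \right\rangle = \left\langle e^{(k-l)\pi J t}e_i, e_i \right\rangle.
\]
Writing $m = k - l \in \Z$ and using $\langle Je_i, e_i \rangle = 0$ together with $|e_i| = 1$, the rotation formula gives
\[
\left\langle e^{m\pi J t}e_i, e_i \right\rangle = \cos(m\pi t).
\]
Therefore
\[
\int_0^1 \left\langle e^{k\pi J t}e_i, e^{l\pi J t}e_i \right\rangle \, dt = \int_0^1 \cos(m\pi t)\, dt.
\]
If $m = 0$ this integral equals $1$. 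If $m \ne 0$, then
\[
\int_0^1 \cos(m\pi t)\, dt = \frac{\sin(m\pi t)}{m\pi}\bigg\vert_0^1 = \frac{\sin(m\pi)}{m\pi} = 0,
\]
since $m$ is an integer. Hence the integral equals $\delta_{kl}$, as claimed.
\end{proof}
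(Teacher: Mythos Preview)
Your proof is correct and follows essentially the same route as the paper: both reduce the inner product to $\cos((k-l)\pi t)$ and integrate. The only cosmetic difference is that the paper writes out the coordinates of $e^{k\pi J t}e_i$ explicitly and applies the cosine addition formula, whereas you invoke orthogonality of $e^{sJ}$ to pass directly to $\langle e^{(k-l)\pi J t}e_i, e_i\rangle$; the substance is identical.
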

\begin{proof}
First, note that, if $0\leq i\leq n$, 
\[
e^{k \pi J t}e_i = (0,\dots,0,\cos(k \pi t),0,\dots,0,\sin(k \pi t),0,\dots,0),
\]
and if $n+1 \leq i \leq 2n$, then
\[
e^{k \pi J t}e_i = (0,\dots,0,-\sin(k \pi t),0,\dots,0,\cos(k \pi t),0,\dots,0).
\]
In either case, we have
\begin{align*}
\int_0^1 \left \langle e^{k \pi J t}e_i, e^{l \pi J t}e_i \right \rangle \, dt &= 
\int_0^1 \cos(k\pi t)\cos(l\pi t) + \sin(k\pi t)\sin(l \pi t) \, dt \\
&= \int_0^1 \cos((k-l) \pi t) \, dt \\
&= \delta_{kl}.
\end{align*}
\end{proof}

\begin{lem}
\label{lem:a}
For $\phi, \psi \in C^{\infty}_{n,k}([0,1])$, 
\begin{equation}
\label{eq:a}
a(\phi,\psi) = \frac{\pi}{2}\sum_{k > 0} |k|\langle z_k, w_k \rangle - \frac{\pi}{2} \sum_{k<0} |k| \langle z_k,w_k \rangle 
\end{equation}
where
\begin{equation}
\label{eq:phipsi}
\phi = \sum_{k\in \Z} z_k e^{k\pi J t} , \qquad \text{ and } \qquad 
\psi = \sum_{k\in \Z} w_k e^{k\pi J t}.
\end{equation}
\end{lem}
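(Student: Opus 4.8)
The plan is to compute $a(\phi,\psi)$ directly from the Fourier expansions in \eqref{eq:phipsi}, using bilinearity to reduce everything to the pairwise integrals $\int_0^1 \langle -J \frac{d}{dt}(z_k e^{k\pi J t}), w_l e^{l\pi J t} \rangle \, dt$. First I would differentiate: $\frac{d}{dt} e^{k\pi J t} = k\pi J e^{k\pi J t}$, so $-J \frac{d}{dt}(z_k e^{k\pi J t}) = -k\pi J^2 e^{k\pi J t} z_k = k\pi e^{k\pi J t} z_k$, using $J^2 = -\Id$ and the fact that $J$ commutes with $e^{k\pi J t}$. (One must be slightly careful about whether $z_k$ is written to the left or right of $e^{k\pi J t}$, but since these are scalar-times-vector or matrix-times-vector products with $J$ commuting with its own exponential, the rearrangement is harmless.) Hence the integrand becomes $k\pi \langle e^{k\pi J t} z_k, e^{l\pi J t} w_l\rangle$.

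Next I would invoke the orthogonality computation. Lemma \ref{lem:Pseudo-orthogonality} handles the case $z_k = e_i$, $w_l = e_i$; by bilinearity of the real inner product and the fact that $e^{k\pi J t}$ is an isometry (in particular $\langle e^{k\pi J t} v, e^{k\pi J t} w\rangle = \langle v, w\rangle$ and more generally the cross terms integrate as in the lemma), the same argument gives $\int_0^1 \langle e^{k\pi J t} z_k, e^{l\pi J t} w_l \rangle \, dt = \delta_{kl} \langle z_k, w_l \rangle$ for all vectors $z_k, w_l \in \R^{2n}$ — one just expands in the standard basis and applies the lemma componentwise, noting the sine/cosine orthogonality kills the $k \neq l$ terms and also the "mixed" $x_i$–$y_i$ pairings. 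Actually the cleanest route is: $\langle e^{k\pi J t} z_k, e^{l \pi J t} w_l\rangle = \langle z_k, e^{(l-k)\pi J t} w_l \rangle$, and then $\int_0^1 e^{m \pi J t}\, dt = 0$ for $m \neq 0$ (as an integral of a matrix of sines and cosines over a full set of periods, when $m$ even; and for $m$ odd one checks directly $\int_0^1 \cos(m\pi t)\,dt = 0$, $\int_0^1 \sin(m\pi t)\, dt = \frac{2}{m\pi}$ — hmm, the sine term does not vanish, so the basis-expansion route via Lemma \ref{lem:Pseudo-orthogonality} is in fact the safe one, since it shows the $\cos((k-l)\pi t)$ combination is what appears and that integrates to $\delta_{kl}$).

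Combining these, only the diagonal terms $l = k$ survive, yielding $a(\phi,\psi) = \frac{1}{2}\sum_{k \in \Z} k\pi \langle z_k, w_k\rangle = \frac{\pi}{2}\sum_{k \in \Z} k \langle z_k, w_k\rangle$. Splitting the sum according to the sign of $k$ (the $k=0$ term vanishes since it is multiplied by $0$) and writing $k = |k|$ for $k > 0$ and $k = -|k|$ for $k < 0$ gives exactly \eqref{eq:a}. I would also remark that the manipulation of the infinite sums is justified because for $\phi, \psi \in C^\infty_{n,k}([0,1])$ the Fourier coefficients decay rapidly, so all the series converge absolutely and term-by-term integration is legitimate.

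The main obstacle is purely bookkeeping: making sure the reduction from arbitrary vector coefficients $z_k, w_l$ to the scalar statement of Lemma \ref{lem:Pseudo-orthogonality} is done correctly, in particular that cross-terms of the form $\langle e^{k\pi Jt} e_i, e^{l\pi Jt} e_j\rangle$ with $i \neq j$ vanish (they do, since $J$ preserves each coordinate $2$-plane $\{x_i, y_i\}$ and these planes are mutually orthogonal) and that the $J$-rotation within a single plane produces the $\cos((k-l)\pi t)$ combination rather than a surviving sine term. There is no analytic difficulty here; this is a routine Fourier computation, identical in spirit to the corresponding lemma in \cite{Hofer_Zehnder_1994}.
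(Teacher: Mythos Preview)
Your overall approach---substitute the Fourier expansions, differentiate, and use orthogonality to reduce to the diagonal---is the same as the paper's. However, there is a genuine gap in the orthogonality step. You assert that
\[
\int_0^1 \langle e^{k\pi J t} z_k,\, e^{l\pi J t} w_l \rangle \, dt = \delta_{kl}\,\langle z_k, w_l \rangle
\]
for \emph{all} $z_k, w_l \in \R^{2n}$, and justify it by saying the cross-terms $\langle e^{k\pi J t} e_i, e^{l\pi J t} e_j\rangle$ with $i\neq j$ vanish because the coordinate $2$-planes are mutually orthogonal, while within a single plane one gets only $\cos((k-l)\pi t)$. But the pair $e_i$ and $e_{n+i}=J e_i$ are \emph{different} basis vectors lying in the \emph{same} $2$-plane, and for these
\[
\langle e^{k\pi J t} e_i,\, e^{l\pi J t} e_{n+i} \rangle = \sin\big((k-l)\pi t\big),
\]
which integrates to $\tfrac{2}{(k-l)\pi}\ne 0$ when $k-l$ is odd. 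Your own parenthetical (``hmm, the sine term does not vanish'') flags exactly this, but retreating to Lemma~\ref{lem:Pseudo-orthogonality} does not help, since that lemma only treats the pairing with the \emph{same} $e_i$ on both sides and with $i\le n$.

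What rescues the computation---and what the paper's proof invokes at the outset---is the parity structure of the coefficients from Lemma~\ref{lem:Fourier decomposition}: $z_m \in V_0$ for odd $m$ and $z_m \in V_0\oplus V_1$ for even $m$. The dangerous sine cross-term can only survive when $k-l$ is odd, i.e.\ one index is odd and the other even. In that case the odd-indexed coefficient lies in $V_0=\operatorname{span}(e_{k+1},\dots,e_n)$ and hence has no $y$-components at all, while any $y$-component of the even-indexed coefficient lies in $\operatorname{span}(e_{n+1},\dots,e_{n+k})$; these sit in \emph{disjoint} coordinate $2$-planes, so the pairing vanishes by genuine orthogonality of planes. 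When $k-l$ is even the sine integrates to zero anyway. Thus the argument goes through, but only after invoking the boundary-condition constraint on the coefficients; you should cite Lemma~\ref{lem:Fourier decomposition} before appealing to Lemma~\ref{lem:Pseudo-orthogonality}.
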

\begin{proof}
First, recall that, by Lemma \ref{lem:Fourier decomposition}, that for $\phi, \psi \in C^{\infty}_{n,k}$, the Fourier expansions
 $\phi = \sum_{k\in \Z} z_k e^{k\pi J t}$ and $\psi = \sum_{k\in \Z} w_k e^{k\pi J t}$ have that 
 $z_k, w_k \in V_0$ for odd $k$ and $z_k, w_k \in V_0 \oplus V_1$ for even $k$.

Substituting Equations \ref{eq:phipsi} into the expression for $a$ and using Lemma \ref{lem:Pseudo-orthogonality}, we get
\begin{align*}
a(\phi,\psi) &= \frac{1}{2} \sum_k k \pi \langle z_k, w_k \rangle \\
&= \frac{\pi}{2} \left ( \sum_{k > 0} |k| \langle z_k, w_k \rangle  - \sum_{k < 0} |k| \langle z_k, w_k \rangle \right ).
\end{align*}

\end{proof}
\begin{defn}
\label{def:a}
Given $\phi,\psi \in H^{1/2}_{n,k}([0,1])$, we define $a(\phi,\psi)$ by Equation \ref{eq:a},
and $a(\phi) \ceq a(\phi,\phi)$.
\end{defn}

\begin{rem} \label{rem:derivative of a}
Lemma \ref{lem:a} gives that Definitions \ref{def:a} and \ref{def:aa} are consistent, i.e.~they coincide for smooth 
paths, $\phi,\psi \in C^{\infty}_{n,k}([0,1])$. 
Recalling the norm on $X$ given in Definition \ref{def:X Hilbert space with
norm}, 
the function $a \colon X \to \R$ given by 
\[ a(\phi) = \| \phi^+ \|^2 -  \| \phi^- \|^2 \]
is therefore differentiable with derivative
\[ da(\phi)(\psi) = \langle (P^+ - P^-)\phi,\psi \rangle \]
and therefore the gradient $\nabla a$ is
\[ \nabla a(\phi) = (P^+ - P^-)\phi = \phi^+ - \phi^- \in X. \]
\end{rem}

For $\phi \in C^{\infty}_{n,k}([0,1])$, consider the expression 
\begin{equation*}
b(\phi) = \int_0^1 \bar{H}(\phi(t))\, dt.
\end{equation*}

Since, by construction, $|\bar{H}(x)| \leq M|x|^2$ for $q_{\Pi}(x)$ large, we have that $b$ 
may be extended to $L^2$, and therefore also on $H^{1/2} \subset L^2$. The following
results follow immediately from the proofs in \cite{Hofer_Zehnder_1994}.

\begin{lem}[\cite{Hofer_Zehnder_1994}, Section 3.3, Lemma 4]
\label{lem:b}
The map $b: X \to \R$ is differentiable. Its gradient is continuous and maps
bounded sets into relatively compact sets. Moreover,
\begin{equation*}
\| \nabla b(x) - \nabla b(y) \| \leq M \| x-y \|
\end{equation*}
and $\lvert b(x) \rvert \leq M \|x\|^2_{L^2_{n,k}}$ for all $ x,y \in X$. $\qed$
\end{lem}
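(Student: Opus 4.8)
The plan is to isolate from the construction of $\bar H$ the analytic facts that make the argument of \cite{Hofer_Zehnder_1994}*{Section~3.3} go through essentially verbatim, and then transcribe that argument to $X = H^{1/2}_{n,k}[0,1]$. The relevant facts are: $\bar H$ is smooth and non-negative; outside a compact set it coincides with the quadratic form $(\tfrac{\pi}{2}+\epsilon)\,q_\Pi$, so that $\bar H(z)\le M|z|^2$ for $|z|$ large and $|\nabla\bar H(z)|\le M(1+|z|)$ for all $z$; and, crucially, the Hessian $\nabla^2\bar H$ is \emph{uniformly} bounded on all of $\R^{2n}$. Smoothness across the gluing locus $\{q_\Pi = 1\}$ uses that $\supp dH \subset q_\Pi^{-1}([0,1))$, so $dH\equiv 0$ near $\{q_\Pi=1\}$, together with $f'(1)=0$; the uniform Hessian bound uses that $\nabla^2 q_\Pi$ is constant, $0<f'\le \tfrac{\pi}{2}+\epsilon$, and $f''$ has compact support, so the only term of $\nabla^2\bar H = f''(q_\Pi)\,\nabla q_\Pi\otimes\nabla q_\Pi + f'(q_\Pi)\,\nabla^2 q_\Pi$ that could grow, namely the first, is supported on a bounded set.

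Granting these, I would first record $|b(x)| \le \int_0^1 \bar H(x(t))\,dt \le M\int_0^1 |x(t)|^2\,dt = M\|x\|^2_{L^2_{n,k}}$, which in particular shows $b$ is finite on $L^2_{n,k}$, hence on $X$. Next, for $x,\xi\in L^2_{n,k}$, a second-order Taylor expansion of $\bar H$ together with the uniform Hessian bound gives
\[
b(x+\xi) - b(x) = \int_0^1 \langle \nabla\bar H(x(t)),\xi(t)\rangle\, dt + E(x,\xi),\qquad |E(x,\xi)|\le \tfrac{1}{2}M\,\|\xi\|^2_{L^2_{n,k}},
\]
so $b$ is Fréchet differentiable on $L^2_{n,k}$ with $L^2$-gradient $G(x)\ceq \nabla\bar H(x(\cdot))$; the pointwise Lipschitz bound $|\nabla\bar H(z)-\nabla\bar H(w)|\le M|z-w|$ (again from the Hessian bound) shows $G\colon L^2_{n,k}\to L^2_{n,k}$ is globally Lipschitz, hence $b\in C^1(L^2_{n,k})$.

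To pass to $X$, write $j\colon X\hookrightarrow L^2_{n,k}$ for the inclusion and $j^*$ for its adjoint; then $db(x)(\xi) = \langle G(x), j\xi\rangle_{L^2_{n,k}} = \langle j^*G(x),\xi\rangle_X$, so $\nabla b(x) = j^*\big(\nabla\bar H(x(\cdot))\big)$. The Lipschitz estimate for $\nabla b$ is immediate from that for $G$ after absorbing the norms of $j$ and $j^*$ into the constant $M$, and continuity of $\nabla b$ follows. For the compactness statement, if $B\subset X$ is bounded then $j(B)$ is bounded in $L^2_{n,k}$, so $G(B)$ is bounded in $L^2_{n,k}$ by the linear growth of $\nabla\bar H$; by the lemma asserting $j^*(L^2)\subset H^1$ with $\|j^*y\|_{H^1}\le\|y\|_{L^2}$, the set $\nabla b(B) = j^*G(B)$ is bounded in $H^1_{n,k}$, and since the inclusion $H^1_{n,k}\hookrightarrow H^{1/2}_{n,k} = X$ is compact by Lemma~\ref{lem:Hilbert space and compact embedding}, $\nabla b(B)$ is relatively compact in $X$.

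The only step that genuinely requires care — and hence the "main obstacle," though a soft one — is the verification of the \emph{global} Hessian bound $\sup_{z}|\nabla^2\bar H(z)|\le M$ directly from the definition \eqref{def:H extension} of $\bar H$, since it is this single estimate (rather than anything dynamical) that simultaneously delivers Fréchet differentiability, the Lipschitz property of $\nabla b$, and — combined with the regularizing compactness of $j^*$ — relative compactness of the image of bounded sets. Once $\bar H$ is known to be a smooth non-negative function agreeing with a fixed quadratic form off a compact set, the remainder is routine.
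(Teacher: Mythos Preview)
Your proposal is correct and is precisely the Hofer--Zehnder argument the paper invokes without details; the paper's own ``proof'' is just the sentence that the result follows immediately from \cite{Hofer_Zehnder_1994}. One small caveat: $\bar H$ is not actually smooth, only $C^{1,1}$, since $q_\Pi$ (via $q_2$) has a jump in its Hessian across $\{y_n=0\}$ --- so $\nabla^2 q_\Pi$ is piecewise constant rather than constant --- but this does not affect your argument, which really only uses the uniform bound on the (a.e.\ defined) Hessian to get the global Lipschitz estimate for $\nabla\bar H$.
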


\begin{rem}We now see that the functional $\Phi_{\bar{H}}:H^{1/2}_{n,k}([0,1]) \to \R$ given by 
\begin{equation*}
\Phi_{\bar{H}}(x) = a(x) - b(x)
\end{equation*}
is well-defined.  Furthermore, since $\bar{H}\in C^1([0,1],\R^{2n})$ and $a$ and $b$ are differentiable, $\Phi_{\bar{H}}$ is differentiable with gradient
\[ \nabla \Phi_{\bar{H}}(x) = x^+ - x^- - \nabla b(x). \qed \]
\end{rem} 

The results below summarize some of the properties of $\Phi_{\bar{H}}$ that we will
use in the following sections.
The proofs follow those given in
\cite{Hofer_Zehnder_1994}. Let $S = \{ (x_1, \dots, y_n) \, |\, -1 \le
y_n \le 1 \}$. 

\begin{lem}
\label{lem:C1}
Assume $x\in X$ is a critical point of $\Phi_{\bar{H}}$, i.e. $\nabla \Phi_{\bar{H}}(x) = 0$. Then $x$
is in $C_{n,k}^{1}([0,1])$. If, in addition, $x(t) \in \R^{2n}_+ \cup
\mathring{S}$ for all $t\in (0,1)$, then $x \in C^{\infty}_{n,k}([0,1])$.
\end{lem}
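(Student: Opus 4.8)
The plan is to run the standard elliptic bootstrap argument from \cite{Hofer_Zehnder_1994}*{Section 3.3}, being careful that the boundary conditions built into $X = H^{1/2}_{n,k}$ are preserved. First I would write out the critical point equation: $\nabla \Phi_{\bar H}(x) = x^+ - x^- - \nabla b(x) = 0$, i.e. $x^+ - x^- = \nabla b(x)$. Since Lemma \ref{lem:b} tells us $\nabla b$ maps bounded sets to relatively compact sets and, more importantly, that for $x \in L^2$ the element $\nabla b(x)$ is obtained by applying the compact adjoint $j^*$ (with $j^*(L^2) \subset H^1$) to the $L^2$-gradient $\nabla_{L^2} b(x) = \nabla \bar H(x(\cdot))$, we get that $x^+ - x^- \in H^1_{n,k}$; adding back the (finite-dimensional, hence smooth) zero mode $x^0$ gives $x \in H^1_{n,k}([0,1])$. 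By Lemma \ref{lem:Smoothness} (which gives $H^s_{n,k} \hookrightarrow C^r_{n,k}$ for $s > 1/2 + r$), $H^1_{n,k} \hookrightarrow C^0_{n,k}$, so $x$ is a genuine continuous path with $x(0), x(1)$ lying in a common leaf $F = z + V_0$.

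Next I would upgrade $x \in H^1_{n,k}$ to $x \in C^1_{n,k}$. Once $x$ is continuous, $t \mapsto \nabla \bar H(x(t))$ is continuous (here one only needs $\bar H \in C^1$, which holds by construction of $\bar H$ in \eqref{def:H extension} — the matching of $H$ with $f(q_\Pi)$ along $q_\Pi = 1$ is $C^1$ because $dH$ is supported in $q_\Pi^{-1}([0,1))$), hence lies in $C^0 \subset L^2$, and $j^*$ applied to a continuous function lands in $H^2_{n,k} \hookrightarrow C^1_{n,k}$; thus $x^+ - x^- \in C^1_{n,k}$ and so $x \in C^1_{n,k}([0,1])$. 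At the $C^1$ level the critical point equation is equivalent to the pointwise ODE $-J\dot x(t) = \nabla \bar H(x(t))$, i.e. $\dot x = X_{\bar H}(x)$, together with the boundary condition $x(0), x(1) \in F$; the boundary condition is automatic since $x \in C^1_{n,k} \subset C^0_{n,k}$, and there is no natural (free) boundary term to worry about because the Fourier expansion defining $H^{1/2}_{n,k}$ has already encoded exactly the leafwise-chord constraint, with the ``conjugate'' directions $V_0 \oplus V_1$ versus $V_0$ built in (this is the point of Lemma \ref{lem:Fourier decomposition}).

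For the final claim, assume in addition $x(t) \in \R^{2n}_+ \cup E$ for all $t \in (0,1)$, where $E$ denotes the relevant leaf (I will write $F$ to match the earlier notation, or keep $E$ as in the statement). On this region, away from the strip $S$, the Hamiltonian $\bar H$ restricted to a neighborhood of the orbit is smooth: on $\R^{2n}_+$ it agrees with $H$ where $q_\Pi \le 1$ and with the smooth function $f(q_\Pi)$ where $q_\Pi > 1$ — but $H$ itself need not be smooth. The correct reading is that $\bar H$ is $C^1$ globally and we only claim $x \in C^\infty$ when the orbit stays where $\bar H$ is smooth; concretely, by Lemma \ref{lem:real solution} (applied once we know $\Phi_{\bar H}(x) > 0$, which is the situation of interest) such an orbit has $q_\Pi(x(t)) \le 1$ for all $t$ and is an orbit of $H$, and $H$ is simple hence locally constant off a compact set and otherwise — wait, $H$ is only assumed $C^\infty$ as a simple Hamiltonian. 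So in fact $\bar H$ is $C^\infty$ on $\{q_\Pi \le 1\}$ (where it equals the simple, hence smooth, $H$) and on $\{q_\Pi > 1\}$ (where it equals $f(q_\Pi)$); the only locus of potential non-smoothness is $\{q_\Pi = 1\} \cap \R^{2n}_-$, i.e. inside the strip issue, and the hypothesis $x(t) \in \R^{2n}_+ \cup E$ for $t \in (0,1)$ keeps the orbit off the bad region. Then one bootstraps: $x \in C^1 \Rightarrow \nabla \bar H(x(\cdot)) \in C^1 \Rightarrow x^+ - x^- \in C^2 \Rightarrow x \in C^2$, and inductively $x \in C^r$ for all $r$, so $x \in C^\infty_{n,k}([0,1])$.

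The main obstacle I expect is bookkeeping around the non-smoothness of $\bar H$: one must check carefully that $\bar H$ is genuinely $C^1$ across $\{q_\Pi = 1\}$ (so that the first bootstrap step $x \in H^1 \to C^1$ works at all), and that the extra hypothesis $x(t) \in \R^{2n}_+ \cup E$ really is what is needed to place the orbit in the region where $\bar H$ is $C^\infty$ so the higher bootstrap can proceed — in particular ruling out that the orbit touches the strip $S$ in $\R^{2n}_-$ where the $y_n$-component of $X_{\bar H}$ has a corner. Everything else is a verbatim transcription of the Hilbert-space regularity argument of \cite{Hofer_Zehnder_1994}, with $E = H^{1/2}_{n,k}$ and the compact operator $j^*$ doing the smoothing.
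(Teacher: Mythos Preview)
Your approach is essentially the same as the paper's---the Hofer--Zehnder bootstrap transported to the space $X = H^{1/2}_{n,k}$---but one step has a genuine gap. You write: ``$j^*$ applied to a continuous function lands in $H^2_{n,k} \hookrightarrow C^1_{n,k}$.'' This is not true. The adjoint $j^* \colon L^2_{n,k} \to H^{1/2}_{n,k}$ gains exactly one derivative (its image lies in $H^1_{n,k}$, as the paper records), because in Fourier coefficients it divides by $|k|$. Continuity of $\nabla \bar H(x(\cdot))$ gives you no additional decay of Fourier coefficients beyond $L^2$, so $j^*(\nabla \bar H(x))$ is still only in $H^1_{n,k}$, not $H^2_{n,k}$, and you cannot reach $C^1$ this way.

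The paper closes this gap differently: once $x \in H^1_{n,k} \subset C^0_{n,k}$, the Fourier relation coming from $d\Phi_{\bar H}(x)(v)=0$ reads $k\pi x_k = a_k$, where $a_k$ are the Fourier coefficients of $\nabla \bar H(x)$. One then sets $\xi(t) = \int_0^t J\nabla \bar H(x(s))\,ds$, observes directly that $\xi \in C^1$ (integrand continuous), and checks via the Fourier relation that $\xi(t) = x(t) - x(0)$. This yields $x \in C^1$ and the ODE $\dot x = J\nabla \bar H(x)$ in one stroke, without appealing to $j^*$ a second time. Your higher bootstrap for the $C^\infty$ statement then works as you describe, iterating this integral argument rather than repeated applications of $j^*$. (Incidentally, the set $E$ in the statement is the strip $S = \R^{2n-1}\times[-1,1]$, not a leaf; the hypothesis $x(t) \in \bar\R^{2n}_+ \cup S$ is exactly what keeps the orbit in the region where $\bar H$ is smooth.)
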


\begin{proof}
The proof given in Hofer and Zehnder \cite{Hofer_Zehnder_1994}, Section 3.3, Lemma 5 also applies in
this case. That is, we write $x$ and $\nabla (\bar{H}(x)) \in L^2_{n,k}$ by their Fourier
series, we have
\begin{align*}
x &= \sum_k e^{k\pi J t}x_k \\
\nabla \bar{H}(x) &= \sum_k e^{k\pi J t}a_k.
\end{align*}
Since $d\Phi_H(x)(v) = 0$, this implies that
\[
\left \langle (P^{+} - P^{-})x,v \right \rangle_{1/2,n,k} - \int_0^1 \left \langle 
\nabla \bar{H}(x(t)),v(t)
\right \rangle dt =0, \; \forall v \in X.
\]
Substituting the Fourier series of $x$ and $\nabla \bar{H}(x)$ into this expression, we obtain
\[
k \pi x_k = a_k.
\]
Therefore $a_0 = 0$ and
\[
\sum_k |k|^2|x_k|^2 \leq \sum |a_k|^2 < \infty.
\]
We conclude that $x \in H^1_{n,k}([0,1])$, and therefore
$x \in C^0_{n,k}([0,1])$ by Lemma \ref{lem:Smoothness}. It follows that
$\nabla \bar{H}(x(t)) \in C^0_{n,k}([0,1])$, so
\[
\xi(t) = \int_0^t J\nabla \bar{H}(x(s)) \, ds \in C^1(\R).
\]
However, it follows from the Fourier expansions that $\xi(t) = x(t) - x(0)$, and
therefore $x \in C^1([0,1])$ and solves
\[
\dot{x}(t) = J\nabla \bar{H}(x(t)).
\]
If $x(t) \in \bar{\R}^{2n}_{+}\cup S$ for all $t$, then $J\nabla \bar{H}(x(t)) \in C_{n,k}^1([0,1])$, so $x \in C_{n,k}^2([0,1])$. Repeating this, the second part of the lemma follows.
\end{proof}

\begin{lem}
\label{lem:Palais-Smale}
$\Phi_{\bar{H}}$ satisfies the Palais-Smale condition.
\end{lem}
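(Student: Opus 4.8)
The plan is to follow the proof of the Palais--Smale condition in \cite{Hofer_Zehnder_1994}, the one genuinely new point being the a priori bound, where the degeneracy of $q_\Pi$ in the $y_n$--direction (the feature that distinguishes our extension $\bar H$ from the globally quadratic extensions used there) must be handled by hand. Let $(x_j)\subset X$ satisfy $|\Phi_{\bar H}(x_j)|\le c$ and $\nabla\Phi_{\bar H}(x_j)\to 0$; we must extract a convergent subsequence. The reduction is standard: writing $\nabla\Phi_{\bar H}(x)=x^+-x^--\nabla b(x)$ and applying $P^\pm$ gives $x_j^\pm=\pm P^\pm\nabla b(x_j)\pm P^\pm\nabla\Phi_{\bar H}(x_j)$, so once $\sup_j\|x_j\|_X<\infty$ is known, the relative compactness of $\nabla b$ on bounded sets (Lemma~\ref{lem:b}) makes $(x_j^+)$ and $(x_j^-)$ subconvergent, while $(x_j^0)$ is bounded in the finite--dimensional space $X^0\cong\R^{n,k}$, hence subconvergent; thus $(x_j)$ subconverges. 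Everything reduces to the a priori estimate $\sup_j\|x_j\|_X<\infty$, which is the main obstacle.

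For this estimate, decompose $\bar H=\bar H_\infty+g$ with $\bar H_\infty(z):=\bigl(\tfrac\pi2+\epsilon\bigr)\,q_\Pi(z)$ and $g:=\bar H-\bar H_\infty$. Since $f(r)=\bigl(\tfrac\pi2+\epsilon\bigr)r$ for $r$ large, $\bar H$ agrees with $\bar H_\infty$ on $\{q_\Pi\ge R\}$ for some $R$, so $g$ is supported in $\{q_\Pi\le R\}$; on that region the compactness of $\supp dH$ together with $0<f'\le\tfrac\pi2+\epsilon$ yield $\|\nabla g\|_{L^\infty(\R^{2n})}<\infty$. Correspondingly $\nabla b=\nabla b_\infty+\nabla\beta$, where $b_\infty(x)=\int_0^1\bar H_\infty(x(t))\,dt$ and $\beta(x)=\int_0^1 g(x(t))\,dt$. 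As in Lemma~\ref{lem:b} (using that $q_\Pi\in C^{1,1}$, so $\nabla q_\Pi$ is globally Lipschitz, and that $X\hookrightarrow L^2$ is compact), $G:=\nabla b_\infty$ is continuous, positively $1$--homogeneous (because $q_\Pi$ is homogeneous of degree $2$ under positive scaling), and maps bounded sets to relatively compact sets, while $\|\nabla\beta(x)\|_X\le C'$ for all $x\in X$ since $\nabla g$ is bounded. Hence $\nabla\Phi_{\bar H}(x)=(P^+-P^-)x-G(x)-\nabla\beta(x)$.

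Suppose now $\|x_j\|_X\to\infty$ and set $y_j:=x_j/\|x_j\|_X$. Dividing the previous identity by $\|x_j\|_X$ and using $\nabla\Phi_{\bar H}(x_j)\to 0$, $\|\nabla\beta(x_j)\|_X\le C'$, and the homogeneity of $G$, we get $(P^+-P^-)y_j-G(y_j)\to 0$. Since $G$ is compact and $\|y_j\|_X=1$, after passing to a subsequence $G(y_j)$ converges, hence so does $(P^+-P^-)y_j$, hence so does $y_j^++y_j^-$; as $(y_j^0)$ subconverges in $X^0$, we obtain $y_j\to y_\infty$ in $X$ with $\|y_\infty\|_X=1$ and $(P^+-P^-)y_\infty=G(y_\infty)$, i.e.\ $y_\infty$ is a critical point of $a-b_\infty$. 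By the regularity bootstrap of Lemma~\ref{lem:C1} (with $\bar H_\infty$ in place of $\bar H$), $y_\infty$ is a classical, nonzero solution of
\[
-J\dot y=\Bigl(\tfrac\pi2+\epsilon\Bigr)\nabla q_\Pi(y),\qquad y(0),\,y(1)\ \text{lying in a common characteristic leaf of }\R^{n,k}.
\]

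It remains to show this forces $y_\infty\equiv 0$, the desired contradiction. Because $q_\Pi(x)=q_2(x_n,y_n)+q_{2n-2}(x_1,\dots,x_{n-1},y_1,\dots,y_{n-1})$, the system decouples across coordinate planes exactly as in the proof of Lemma~\ref{lem:real solution}. In each plane $(x_i,y_i)$ with $i\le n-1$ the motion is a rotation at angular speed $\bigl(\tfrac\pi2+\epsilon\bigr)\tfrac{4}{N^2}$ (for $i\le k$) or $\bigl(\tfrac\pi2+\epsilon\bigr)\tfrac{2}{N^2}$ (for $k<i<n$), which lies in $(0,2\pi)$, respectively $(0,\pi)$, once $N$ is large; together with the leaf conditions---full return of $(x_i,y_i)$ for $i\le k$, and $y_i(0)=y_i(1)=0$ for $k<i<n$---this forces the corresponding components of $y_\infty$ to vanish identically. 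In the plane $(x_n,y_n)$ the motion is a counterclockwise rotation at speed $\pi+2\epsilon$ on $\{y_n>0\}$, glued along $\{y_n=0\}$ to the pure $x_n$--shear $\dot x_n=0,\ \dot y_n=(\pi+2\epsilon)x_n$ on $\{y_n<0\}$, and the leaf condition here is $y_n(0)=y_n(1)=0$. If $x_n(0)>0$ the orbit performs a partial counterclockwise turn in $\{y_n\ge 0\}$, re--crosses $\{y_n=0\}$ at time $\pi/(\pi+2\epsilon)<1$ with $x_n<0$, and thereafter $y_n$ decreases strictly; if $x_n(0)<0$ the orbit immediately enters $\{y_n<0\}$ and $y_n$ decreases strictly; in either case $y_n(1)<0\neq 0$. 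Hence $x_n(0)=0$ and this component of $y_\infty$ also vanishes. (Note the slope $\tfrac\pi2+\epsilon$ is used precisely to keep all these rotation numbers off resonance---the value $\tfrac\pi2$ itself would allow a nonzero solution in the $(x_n,y_n)$--plane.) Therefore $y_\infty\equiv 0$, contradicting $\|y_\infty\|_X=1$; this establishes $\sup_j\|x_j\|_X<\infty$, and the reduction above completes the proof.
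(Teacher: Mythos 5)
Your proof is correct and follows essentially the same strategy as the paper: assume the Palais--Smale sequence has unbounded norm, rescale, use compactness (of $\nabla b$ or, in your packaging, of $G=\nabla b_\infty$) to pass to a subsequential limit $y_\infty$ of norm one solving the Hamiltonian system for $Q=(\tfrac\pi2+\epsilon)q_\Pi$, and derive a contradiction from the non-resonance of $Q$. The only difference is cosmetic -- you split $\bar H=\bar H_\infty+g$ rather than working directly with the pointwise bound $|\nabla\bar H-\nabla Q|\le M$ -- and you usefully spell out the non-resonance verification for $Q$, in particular the behavior of the glued rotation/shear flow in the $(x_n,y_n)$-plane, which the paper leaves implicit.
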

\begin{proof}
We recall that, for $\Phi_{\bar{H}}$ to satisfy the Palais-Smale condition, we must have that, for every sequence $\{x_n\}$ with $\nabla\Phi_{\bar{H}}(x_n) \to 0$, there exists a convergent subsequence. If $\|x_n\|$ is bounded, then this follows from the compactness of $\nabla b$ and of $P^0$.

We now assume that the sequence of norms $\|x_n\|$ is unbounded. Consider the rescaled paths $y_n \ceq \frac{1}{\|x_n\|}x_n$, so that $\|y_n\| = 1$. Now, by assumption, 
\begin{equation*}
(P^+ - P^-)y_k - j^*\left(\frac{1}{\|x_k\|} \nabla \bar{H}(x_k)\right) \to 0.
\end{equation*}
Now note that there exists an $M$ such that $|\nabla \bar{H}(z)| < M|z|$ for all $z \in \R^{2n}$.  It follows that the sequence
\begin{equation*}
\frac{\nabla \bar{H}(x_k)}{\|x_k\|} \in L^2
\end{equation*}
is bounded in $L^2$.

Since $j^*:L^2\to X$ is compact, $(P^+ - P^-)y_k$ is relatively compact, and $y^0_k$ is bounded in $\R^{2n}$, it follows that the sequence $y_k$ is relatively compact in $X$. Let $\epsilon>0$ be as in the definition of $\bar{H}$ in Equation \ref{def:H extension}. 
Define
\begin{equation*}
Q(x) = \left ( \frac{\pi}{2} + \epsilon \right ) q_\Pi(x).
\end{equation*}
After taking a subsequence we may assume that $y_k \to y$ in $X$ and therefore $y_k \to y$ in $L^2$. Note that, since $\nabla Q$ defines a continuous operator on $L^2$, and also that, for $\lambda > 0$,
\[
\nabla Q(\lambda x) = \lambda \nabla Q(x).
\] 
It follows that
\begin{equation*}
\begin{split}
\left\Vert \frac{ \nabla \bar{H}(x_k) }{\left\Vert x_k \right\Vert} - \nabla Q(y) \right\Vert_{L^2} \leq & \left\Vert \frac{\nabla \bar{H}(x_k)}{\left\Vert x_k \right\Vert} - \nabla Q(y_k)\right\Vert_{L^2} \\
&+ \left\Vert \nabla Q(y_k) - \nabla Q(y) \right\Vert_{L^2}\\
& =  \frac{1}{\left\Vert x_k \right\Vert}\left\Vert \nabla \bar{H}(x_k)- \nabla Q(x_k)\right\Vert_{L^2} \\
&+ \left\Vert \nabla Q(y_k) - \nabla Q(y) \right\Vert_{L^2}.
\end{split}
\end{equation*}
Since, furthermore, $\vert \nabla \bar{H}(z) - \nabla Q(z) \vert \, \leq M$ for all $z\in \R^{2n}$, we may conclude that
\begin{equation*}
\frac{\nabla \bar{H}(x_k)}{\left \Vert x_k \right \Vert } \to \nabla Q(y) \text{ in } L^2.
\end{equation*}
Therefore,
\[
\frac{\nabla b(x_k)}{\left\Vert x_k \right\Vert} = j^*\left( \frac{\nabla \bar{H}(x_k)}{\left \Vert x_k \right \Vert }\right) \to
j^*\left(\nabla Q(y) \right) \text{ in } X.
\]
It follows from this convergence that $y$ satisfies the following system of equations in $X$:
\begin{align*}
y^+ - y^-  - j^* \nabla Q(y) &= 0,\\
\left \Vert y \right \Vert &= 1.
\end{align*}
As in Lemma \ref{lem:C1}, we now have that $y \in C^1([0,1],\R^{2n})$ and that $y$ also satisfies the Hamiltonian equation
\begin{equation}
\label{eq:Hamiltonian system}
\begin{split}
&\dot{y}(t) = X_{Q}(y(t)),\\
&y(0), y(1) \in \R^{n,k}.
\end{split}
\end{equation}
By construction of $Q$, however, there are no non-trivial solutions of \eqref{eq:Hamiltonian system}. 
This, however, contradicts the assumption that $\Vert y \Vert= 1$, and we conclude that the sequence $x_k$ must be bounded, proving the lemma.
 
\end{proof}

\begin{lem}
\label{lem:Global flow}
The equation
\[
\dot{x} = -\nabla\Phi_{\bar{H}}(x), \; x\in X
\]
defines a unique global flow $\R \times X \to X:(t,x) \mapsto \phi^{t}(x) \equiv
x\cdot t$.
\qed
\end{lem}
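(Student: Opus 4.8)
The plan is to recognize $-\nabla\Phi$ as a globally Lipschitz vector field on the Hilbert space $X = H^{1/2}_{n,k}$ and then to invoke the standard Cauchy--Lipschitz (Picard--Lindel\"of) theory for ODEs on Banach spaces, supplemented by an a priori bound that rules out finite-time blow-up. This is exactly the argument of \cite{Hofer_Zehnder_1994}, carried over verbatim to our function space.

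First I would recall from the remark following Lemma \ref{lem:b} that $\nabla\Phi(x) = (P^+ - P^-)x - \nabla b(x)$. The map $x \mapsto (P^+ - P^-)x$ is bounded and linear; since $X = X^- \oplus X^0 \oplus X^+$ is an orthogonal decomposition it has operator norm at most $1$, so it is Lipschitz. By Lemma \ref{lem:b}, $\nabla b$ is Lipschitz with constant $M$. Hence $-\nabla\Phi$ is Lipschitz on all of $X$ with constant $1 + M$, and in particular continuous.

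Next, by the Picard--Lindel\"of theorem on Banach spaces, for each $x_0 \in X$ there is a unique maximal solution $x(t)$ of $\dot x = -\nabla\Phi(x)$ with $x(0) = x_0$, defined on an open interval $(\alpha,\omega) \ni 0$, and these solutions depend continuously on $x_0$. To see that $(\alpha,\omega) = \R$, I would use that a globally Lipschitz vector field has at most linear growth, $\|\nabla\Phi(x)\| \le \|\nabla\Phi(0)\| + (1+M)\|x\|$. Then on any compact subinterval $[0,T] \subset [0,\omega)$, Gr\"onwall's inequality gives $\|x(t)\| \le (\|x_0\| + \|\nabla\Phi(0)\|\,T)\,e^{(1+M)T}$, so $x(t)$ stays in a bounded set as $t \to \omega$; the standard continuation criterion then forces $\omega = +\infty$, and symmetrically $\alpha = -\infty$. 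Finally, uniqueness of solutions yields the group property $\phi^{t+s} = \phi^t \circ \phi^s$ and $\phi^0 = \Id$, while continuous dependence on initial data gives continuity of $(t,x) \mapsto \phi^t(x)$.

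I do not expect a genuine obstacle here: the entire content of the lemma is packaged in the global Lipschitz estimate of Lemma \ref{lem:b}, and everything else is the textbook ODE theory on Banach spaces. The only point worth stating carefully is that the Lipschitz constant is uniform over $X$ (rather than merely local), which is what simultaneously guarantees uniqueness and global existence.
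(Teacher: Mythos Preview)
Your proposal is correct and matches the paper's approach: the paper's one-line proof simply states that the result follows from the global Lipschitz continuity of $\nabla\Phi$ on $X$, and you have spelled out exactly how that Lipschitz estimate (from Lemma~\ref{lem:b} together with the bounded linear part $P^+ - P^-$) feeds into Picard--Lindel\"of and Gr\"onwall to give a global flow.
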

\begin{proof}
This follows immediately from the global Lipschitz continuity of  
$\nabla \Phi_{\bar{H}}$ as a vector field on $X$.
\end{proof}

\begin{lem} \label{lem:FlowRepresentation}
The flow of the ODE $\dot x = -\nabla \Phi_{\bar{H}}(x)$ has the following form 
\begin{equation}
\phi^{t}(x) = e^t x^{-} + x^{0} + e^{-t}x^{+} + K(t,x),
\end{equation}
where $K:\R \times X \to X$ is continuous and maps bounded sets into precompact
sets
and $x^- = P^-(x)$, $x^0 = P^0(x)$ and $x^+ = P^+(x)$.
\end{lem}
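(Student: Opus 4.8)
The plan is to mimic the variational-flow decomposition from Hofer--Zehnder, tracking carefully how the splitting $X = X^- \oplus X^0 \oplus X^+$ interacts with the gradient flow. Recall from the preceding remarks that $\nabla\Phi(x) = x^+ - x^- - \nabla b(x)$, where $\nabla b$ is the Lipschitz, compact-valued vector field of Lemma \ref{lem:b}. The ODE $\dot x = -\nabla\Phi(x)$ thus reads $\dot x = x^- - x^+ + \nabla b(x)$. Projecting with $P^-, P^0, P^+$ and using that these projections commute with the linear part $x \mapsto x^- - x^+$, I would write the flow as a sum of three decoupled linear pieces plus a remainder driven by $\nabla b$.

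Concretely, I would first solve the linear problem $\dot y = y^- - y^+$ (i.e. $\dot y^- = y^-$, $\dot y^0 = 0$, $\dot y^+ = -y^+$), whose flow is exactly $e^t x^- + x^0 + e^{-t} x^+$; this is the main term. Then, writing $\phi^t(x) = e^t x^- + x^0 + e^{-t} x^+ + K(t,x)$ as an \emph{ansatz}, I would derive the integral equation that $K$ must satisfy by Duhamel's formula, namely
\[
K(t,x) = \int_0^t \left( e^{t-s} P^- + P^0 + e^{-(t-s)} P^+ \right) \nabla b\left( \phi^s(x) \right)\, ds,
\]
and check that this is consistent with the flow being the one from Lemma \ref{lem:Global flow}. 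Continuity of $K$ in $(t,x)$ follows from continuity of the flow (Lemma \ref{lem:Global flow}) and of $\nabla b$ (Lemma \ref{lem:b}).

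The key point — and the main obstacle — is showing $K(t,x)$ maps bounded sets to precompact sets. For this I would fix a bounded set $B \subset X$ and a finite time $t$; by Lemma \ref{lem:Global flow} and Gronwall, $\{\phi^s(x) : x \in B, s \in [0,t]\}$ is bounded in $X$, so by Lemma \ref{lem:b} the set $\{\nabla b(\phi^s(x))\}$ lies in a fixed compact subset $\mathcal{C}$ of $X$. Then each operator $e^{t-s}P^- + P^0 + e^{-(t-s)}P^+$ has operator norm bounded by $e^t$ on $[0,t]$, so the integrand takes values in the bounded set $e^t \cdot \operatorname{conv}(\mathcal{C} \cup \{0\})$, which is precompact (convex hull of a compact set in a Hilbert space, closure taken). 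The integral $K(t,x)$ is then a limit of Riemann sums lying in $t \cdot e^t \cdot \overline{\operatorname{conv}}(\mathcal{C} \cup \{0\})$, hence $K(t,B)$ is contained in a fixed compact set. I would need to be mildly careful that the Riemann-sum approximation converges in norm uniformly in $x \in B$, which follows from uniform continuity of $s \mapsto \nabla b(\phi^s(x))$ on the compact parameter domain together with the bound on the propagators; alternatively one invokes that the integral of a continuous curve into a compact convex set lands in that set. This compactness of $K$ is exactly what is needed later to deduce the Palais--Smale-type deformation properties of the flow, so it is worth stating cleanly here.
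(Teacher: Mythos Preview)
Your approach is essentially the same as the paper's: both define $K$ via the Duhamel/variation-of-constants formula for the linear flow $y \mapsto e^t y^- + y^0 + e^{-t} y^+$ with forcing $\nabla b$, and then verify continuity and compactness directly (the paper in fact gives fewer details than you do, simply writing down the integral formula and deferring to Hofer--Zehnder). One small imprecision: the integrand does not literally lie in $e^t\cdot\operatorname{conv}(\mathcal{C}\cup\{0\})$, since the propagator is not a scalar; the clean way to say it is that $(s,c)\mapsto (e^{t-s}P^- + P^0 + e^{-(t-s)}P^+)c$ is continuous on the compact set $[0,t]\times\mathcal{C}$, so its image is compact, and the integral lands in $t$ times the closed convex hull of that image.
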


\begin{proof}

The proof of this lemma follows exactly the proof in Hofer and Zehnder 
\cite{Hofer_Zehnder_1994}, Section 3.3, Lemma 7. The key point is that if we explicitly define $K$ 
by the formula 
\[
K(t,x) = -\int_0^t \left( e^{t-s}P^- + P^0 + e^{-t+s} P^+
\right)\nabla b(x \cdot s) 
\, ds,
\]
we may verify directly that this has the required properties.
\end{proof}

\subsection{Existence of a chord}

We will now complete the proof of Proposition \ref{prop:Upper Bound}. 
To do this, we will prove the following:

\begin{thm}
\label{thm:Upper bound}
If $H$ is a simple Hamiltonian on $(U, U^{n,k})$ and $m(H) > \frac{\pi}{2}$, then there  
exists an orbit of the system $\dot{x} = X_H(x)$ with return time $T =1$
and $\Phi_{\bar{H}}(x) > 0$.
\end{thm}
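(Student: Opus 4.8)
The plan is to follow the classical Hofer--Zehnder minimax scheme, adapted to the relative (coisotropic) setting via the Hilbert space $X = H^{1/2}_{n,k}[0,1]$ and the functional $\Phi_{\bar H} = a - b$ set up above. First I would fix the simple Hamiltonian $H$ on $(U, U^{n,k})$ with $m(H) > \tfrac{\pi}{2}$, form the extended Hamiltonian $\bar H$ from \eqref{def:H extension}, and observe that $\Phi_{\bar H}$ is $C^1$, satisfies the Palais--Smale condition (Lemma~\ref{lem:Palais-Smale}), and generates a global flow of the form given in Lemma~\ref{lem:FlowRepresentation}. The goal is to produce a critical point $x$ of $\Phi_{\bar H}$ with $\Phi_{\bar H}(x) > 0$; by Lemma~\ref{lem:real solution} such a critical point is automatically a non-constant orbit of the original $H$, and by Lemma~\ref{lem:C1} it is smooth. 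One then checks that its return time is exactly $1$ (the boundary condition $x(0), x(1)$ in the same isotropic leaf gives return time $\le 1$; admissibility-type considerations, or rescaling time, force it to be $1$, which is where the contradiction with $m(H) > \pi/2$ versus $c \le \pi/2$ ultimately comes from).

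The heart of the argument is the minimax. Following Hofer--Zehnder, I would define a class of admissible deformations of a suitable reference set and a corresponding minimax value $c(\Phi_{\bar H}) = \inf_{h} \sup_{x \in h(\Gamma)} \Phi_{\bar H}(x)$, where $\Gamma$ is built from the finite-dimensional piece $X^- \oplus X^0$ together with a half-sphere in a chosen positive direction $e^+ \in X^+$. The two geometric ingredients to verify are: (i) a linking/intersection estimate showing $\Phi_{\bar H} \ge \alpha > 0$ on a sphere $S^+_\rho = \{ x \in X^+ : \|x\| = \rho \}$ of small radius — this uses that near the origin $\bar H = H$ vanishes on an open set and $b(x) = O(\|x\|^2_{L^2}) = o(\|x\|^2_X)$, so $a$ dominates; and (ii) an upper bound $\Phi_{\bar H} \le 0$ on the boundary of the large reference set, using $\bar H(x) \ge (\tfrac{\pi}{2}+\epsilon) q_\Pi(x)$ and a computation showing that on $X^- \oplus X^0 \oplus \R e^+$ the quadratic form $a$ is controlled by $\tfrac{\pi}{2}\|\cdot\|^2$ while $b$ grows faster — exactly as in the standard proof. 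Combined with Palais--Smale, this yields a critical value $c(\Phi_{\bar H}) \ge \alpha > 0$.

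The main obstacle I anticipate is the interaction between the non-smooth extension $\bar H$ (which is only $C^1$, because $q_2(x,y)$ is not $C^2$ across $y = 0$) and the region $\R^{2n}_- \setminus S$ where the flow of $X_{\bar H}$ degenerates. Specifically, in Lemma~\ref{lem:real solution} the authors already flagged that orbits with boundary in $\R^{n,k}$ must stay in $\bar{\R}^{2n}_+ \cup S$; I need the minimax critical point to satisfy this so that Lemma~\ref{lem:C1}'s bootstrapping to $C^\infty$ applies and so that it is genuinely an orbit of $H$ rather than an artifact of the extension. Handling this requires care in choosing the reference class and possibly an a priori confinement argument (e.g.\ the flow-invariance of $\{q_\Pi > 1\}$ plus monotonicity of the $y_n$-coordinate outside $S$) to rule out escaping critical points, together with a careful treatment of the gradient flow where $\nabla b$ is merely Lipschitz. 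The verification that the resulting orbit has return time precisely $1$ — as opposed to some $T < 1$, which would not contradict anything — is the other delicate point, and I expect it to be resolved exactly as in \cite{Hofer_Zehnder_1994} by reparametrizing: if the minimal return time were $T < 1$ one rescales $H$ to $H(\cdot/T)$-type modifications to still violate the capacity bound, so without loss of generality $T = 1$.
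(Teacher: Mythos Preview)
Your overall architecture is the same as the paper's: linking geometry $(\Sigma_\tau, \Gamma_\alpha)$ in $X = H^{1/2}_{n,k}$, Palais--Smale, flow representation, Leray--Schauder degree for the intersection, then the Minimax Lemma, and finally Lemma~\ref{lem:real solution} to come back to $H$. But two of your steps, as written, do not go through.

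For your step (i), the estimate $b(x) = O(\|x\|_{L^2}^2) = o(\|x\|_X^2)$ is false: the inclusion $X \hookrightarrow L^2$ is bounded, so $\|x\|_{L^2}^2$ is only $O(\|x\|_X^2)$, not $o$. The paper's argument (Lemma~\ref{lem:Linking 2}) is different: one first uses Lemma~\ref{lem:H zero at origin} to precompose $H$ with a Hamiltonian diffeomorphism of $(U,U^{n,k})$ so that $H$ (hence $\bar H$) vanishes in a neighbourhood of the origin; then Taylor plus quadratic growth at infinity give a \emph{cubic} bound $|\bar H(z)| \le K|z|^3$, and Sobolev $H^{1/2} \hookrightarrow L^3$ yields $b(x) \le CK\|x\|_X^3 = o(\|x\|_X^2)$. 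You neither invoke Lemma~\ref{lem:H zero at origin} nor the cubic/Sobolev step; without them the lower bound on $\Gamma_\alpha$ fails.

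For your step (ii), ``exactly as in the standard proof'' hides the genuinely new difficulty of this paper. In Hofer--Zehnder the extension is by the honest quadratic $|z|^2$, so $L^2$-orthogonality of $x^-, x^0, se^+$ immediately gives $\int q(x) = \int q(x^-) + \int q(x^0) + s^2\int q(e^+)$. Here $q_\Pi$ is only piecewise quadratic (because $q_2(x,y) = x^2 + y^2$ for $y\ge 0$ and $q_2(x,y) = x^2$ for $y<0$), and the cross terms do \emph{not} vanish by orthogonality. The paper isolates this in Lemma~\ref{lem:Integral inequality}, which proves the one-sided estimate
\[
\int_0^1 q_\Pi(x^- + x^0 + se^+)\,dt \ \ge\ \int_0^1 q_\Pi(x^0)\,dt + \int_0^1 q_\Pi(se^+)\,dt
\]
by a hands-on analysis of the sign of $s\sin(\pi t) + \eta(t)$ on the set where the path dips into $\R^2_-$. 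This inequality is what feeds into Lemma~\ref{lem:Linking 1}; without it your boundary estimate $\Phi|_{\partial\Sigma_\tau} \le 0$ is unproved.

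Your worries in the last paragraph are largely misplaced. The confinement of the critical orbit to $\bar{\R}^{2n}_+ \cup S$ is already contained in the proof of Lemma~\ref{lem:real solution} (the $y_n$-component is monotone outside $S$ in the lower half-space), so no extra a priori argument is needed. As for the return time: the variational solution satisfies $x(0), x(1)$ in the same leaf, so its return time is at most $1$, and that is all that is needed to conclude $H \notin \mathcal H_a$ and hence Proposition~\ref{prop:Upper Bound}; no rescaling argument is required.
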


The remainder of this section will prove the theorem.  The proof follows closely 
the proof of \cite{Hofer_Zehnder_1994}, Section 3.1, Theorem 2, though it introduces some 
new subtleties.  
We start by recalling the Minimax Lemma (see \cite{Hofer_Zehnder_1994}, page 79 for a proof), 
which will play a key role.

\begin{defn}
	Let $f:X \to \R$ be a differentiable function on a Hilbert space $X$, i.e. $f\in C^1(X,\R)$, and let $\mathcal{F}$ be a family of subsets $F\subset X$. We call the value
	\[
	c(f,\mathcal{F}) \ceq \inf_{F \in \mathcal{F}} \sup_{x\in F} f(x) \in \R \cup \{\infty\} \cup \{-\infty\}
	\]
	the \defin{minimax} of $f$ on the family $\mathcal{F}$.
\end{defn}

\begin{lem}[Minimax Lemma]
	Suppose $f \in C^1(X,\R)$, where $X$ is a Hilbert space, and that $f$ satisfies the following conditions:
	\begin{enumerate}
		\item $f$ is Palais-Smale,
		\item $x = -\nabla f(x)$ defines a global flow $\phi_t(x)$ on $X$,
		\item The family $\mathcal{F}$ is positively invariant under the flow, i.e., $\phi_t(F) \in \mathcal{F}$
		for all $F \in \mathcal{F}$ and all $t\geq 0$,
		\item $-\infty < c(f,\mathcal{F}) < \infty$,
	\end{enumerate}
	then the real number $c(f,\mathcal{F})$ is a critical value of $f$, that is, there exists an element $x^* \in X$
	with $\nabla f(x^*) = 0$ and $f(x^*)= c(f,\mathcal{F})$.
\end{lem}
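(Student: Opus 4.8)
The plan is to run the standard deformation argument for minimax critical values, which is exactly the proof in \cite{Hofer_Zehnder_1994}. Write $c \ceq c(f,\mathcal{F})$, a finite real number by hypothesis (4). I would argue by contradiction: assume $c$ is \emph{not} a critical value of $f$, i.e.\ $\nabla f(x) \neq 0$ for every $x$ with $f(x) = c$, and derive a contradiction with the definition of $c$ as an infimum.

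The first step is to promote pointwise non-criticality to a \emph{uniform} lower bound on the gradient in a band around the level $c$: there exist $\delta > 0$ and $\epsilon_0 > 0$ with $\|\nabla f(x)\| \ge \delta$ whenever $|f(x) - c| \le \epsilon_0$. Indeed, if no such constants existed one could choose $x_n$ with $|f(x_n) - c| \le 1/n$ and $\|\nabla f(x_n)\| \le 1/n$; this is a Palais--Smale sequence, so by hypothesis (1) a subsequence converges to some $x^\ast$, and continuity of $f$ and of $\nabla f$ forces $f(x^\ast) = c$ and $\nabla f(x^\ast) = 0$, contradicting the assumption.

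Next is the deformation step. Fix $\epsilon \in (0,\epsilon_0]$ and $T > 0$ with $\delta^2 T \ge 2\epsilon$, and use the global negative gradient flow $\phi^t$ from hypothesis (2). Since $\frac{d}{dt} f(\phi^t(x)) = -\|\nabla f(\phi^t(x))\|^2 \le 0$, the map $t \mapsto f(\phi^t(x))$ is non-increasing, so an orbit cannot re-enter the band $\{\, |f - c| \le \epsilon \,\}$ from below. Hence for any $x$ with $f(x) \le c + \epsilon$ one of two things happens: either $f(\phi^{t_0}(x)) < c - \epsilon$ for some $t_0 \in [0,T]$, in which case monotonicity gives $f(\phi^T(x)) \le c - \epsilon$; or $\phi^t(x)$ stays in $\{\, c - \epsilon \le f \le c + \epsilon \,\} \subset \{\, |f - c| \le \epsilon_0 \,\}$ for all $t \in [0,T]$, in which case $f(\phi^T(x)) - f(x) = -\int_0^T \|\nabla f(\phi^t(x))\|^2\, dt \le -\delta^2 T \le -2\epsilon$ and again $f(\phi^T(x)) \le c - \epsilon$. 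Thus $\phi^T$ maps $\{\, f \le c + \epsilon \,\}$ into $\{\, f \le c - \epsilon \,\}$.

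To conclude, I would invoke the definition of $c$ as an infimum together with $c < \infty$ to pick $F \in \mathcal{F}$ with $\sup_{x \in F} f(x) < c + \epsilon$, so $F \subset \{\, f \le c + \epsilon \,\}$. Then $\phi^T(F) \subset \{\, f \le c - \epsilon \,\}$, while $\phi^T(F) \in \mathcal{F}$ by positive invariance of the family (hypothesis (3)). Therefore $\sup_{x \in \phi^T(F)} f(x) \le c - \epsilon < c = \inf_{F' \in \mathcal{F}} \sup_{x \in F'} f(x)$, a contradiction. Hence $c$ is a critical value and there is $x^\ast \in X$ with $\nabla f(x^\ast) = 0$ and $f(x^\ast) = c$. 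The only delicate point is the deformation step — converting non-criticality at the level $c$ into a uniform gradient estimate via Palais--Smale and then integrating it along the flow; here the fact that $\phi^t$ is an honest negative gradient flow, so that $f$ is monotone along orbits, makes this noticeably cleaner than in the general pseudo-gradient setting, and everything else is bookkeeping with the minimax value.
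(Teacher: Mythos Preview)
Your argument is correct and is precisely the standard deformation proof from \cite{Hofer_Zehnder_1994} that the paper cites but does not reproduce. The paper merely states the Minimax Lemma and refers the reader to page~79 of \cite{Hofer_Zehnder_1994} for the proof, so there is no alternative approach to compare against; your write-up matches that reference.
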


We will use the Minimax Lemma above over the family of sets 
$\mathcal{F} = \{ \phi^{t}(\Sigma_\tau) \}$ to establish the existence 
of a critical point of the action functional. 
As established in Lemma \ref{lem:real solution}, 
it suffices to show this for the Hamiltonian $\bar{H}$, 
as the resulting orbit will be an orbit of $H$. 

The plan of the proof is as follows. 
In Lemmas \ref{lem:pointwise inequality} and \ref{lem:Integral inequality}, 
we prove a pair of technical inequalities on the polynomial part of $\bar{H}$. 
Then, we produce two ``half-infinite'' dimensional subsets of $X$,
$\Sigma$ and $\Gamma$, and in Lemmas \ref{lem:Linking 1} and \ref{lem:Linking 2} 
we show that the action $\Phi_{\bar{H}}|_{\partial\Sigma} 
< 0$ and that the action $\Phi_{\bar{H}}|_{\Gamma} > 0$, respectively. 
We then use the a Leray-Schauder degree argument in Lemma \ref{lem:Linking 3} 
to show that the flow of $\phi_t(\Sigma_{\tau})$ intersects $\Gamma_{\alpha}$ for 
all $t \geq 0$, and finally, we apply the Minimax Lemma to the union of the sets 
$\phi_t(\Sigma_{\tau})$, which proves the result.

We
begin with the
following lemma.

\begin{lem}
 \label{lem:H zero at origin}
Let $H\in \mathcal{H}(U,U^{n,k})$. Then there exists a compactly
supported Hamiltonian diffeomorphism
$\psi \colon U \to U$ with $\psi( U^{n,k} ) = U^{n,k}$
such that $H \circ \psi \in \mathcal{H}(U,U^{n,k})$ and $H \circ \psi$ 
vanishes in a neighbourhood of $0$.
\end{lem}

\begin{proof}
Observe that in order for a Hamiltonian $K$ to have a Hamiltonian vector field 
whose flow preserves $U^{n,k}$,
the following derivatives 
\[
\frac{\partial}{\partial x_i} K(x_1, \dots, x_k, x_{k+1}, \dots, x_n, y_1, \dots, y_k, 0, \dots, 0)  = 0 \qquad \text{ for } i \ge k+1
\]
must vanish along $U^{n,k}$.

By hypothesis, $H$ is admissible, so there exists an interior point $p \in U^{n,k}$ in whose neighbourhood $H$ vanishes.
Let $V$ be a neighbourhood of the ray $ \{ \tau p \, | \, \tau \in [0,1] \}$ that is invariant under the involution 
\begin{dmath}
c_{n,k} \colon (x_1, \dots, x_n, y_1, \dots, y_k, y_{k+1}, \dots, y_n) \mapsto
(x_1, \dots, x_n, y_1, \dots, y_k, -y_{k+1}, \dots, -y_n).
\end{dmath}
Let $\rho$ be a $c_{n,k}$-invariant cut-off function, identically equal to $1$ on the neighbourhood $V$ and whose support
is compactly contained in the interior of $U$.

Now define a Hamiltonian by $K \colon Z(1) \to \R$ by 
$$ K \colon z \mapsto \rho(z) \langle z, -J p \rangle.$$ Let $X_K$ be its associated Hamiltonian vector field and $\psi_K$ its time $1$ map.

Observe first that the Hamiltonian vector field $X_K(z) = p$ for any $z\in V$, so
$\psi_K(0) = p$ and thus $H \circ \psi_K$ vanishes in a neighbourhood of $0$.

A computation of $\partial_{x_j} K$ for $j \ge k+1$ shows that the vector field is tangent to $U^{n,k}$ (using 
both that $p \in U^{n,k}$ and that $\rho$ is $c_{n,k}$-invariant).
\end{proof}

From now on, without loss of generality, we assume that $H$ vanishes in a neighborood of $0$.

\begin{prop}
\label{prop:Existence of an orbit}
There exists $x^{*} \in X$ satisfying $\nabla 
\Phi_{\bar{H}}(x^{*}) = 0$ and $\Phi_{\bar{H}}(x^{*})
> 0$.
\end{prop}

The proof of Proposition \ref{prop:Existence of an orbit} follows from the following
lemmas. We set some notation for the discussion which follows.

\begin{defn}
\begin{enumerate}
 \item $e_n \ceq (0,\dots,x_n=1,0,\dots,0)^T$

 \item  $e^+(t) \ceq e^{\pi J t}e_n = (0, \dots,0, x_n= \cos(\pi t), 0, \dots, 0,
     y_n=\sin(\pi t))^T$
\item  \[\begin{split}
\Sigma_{\tau} \ceq \lbrace x \in X \, \vert \, 
x = x^{-} + x^{0} + se^{+} ,
x^{-} \in X^{-}, x_{0} \in
 X^{0} ,\\
\| x^{-} + x^{0}\| \,\leq \tau, \text{ and } 0 \le s \leq \tau \rbrace
\end{split}\]
\item  $\Gamma_{\alpha} \ceq \left\lbrace x \in X^+ \, \vert \, \| x\| = \alpha
\right\rbrace$

\end{enumerate}
\end{defn}

\begin{lem} \label{lem:pointwise inequality}
    Let $u =(0,\dots,0,\xi,0,\dots,0, \eta) \colon [0,1] \to \R^{2n}$ be a smooth function, where $\langle u(t),e_n \rangle = \xi(t)$ and $\langle u(t),e_{2n}\rangle = \eta(t)$ are the $x_n$ and $y_n$ coordinates, respectively, of $u(t)$, and suppose that $s \ge 0$. Then 
    \[
        q_2( u(t) + se^+(t) ) \ge s^2 + 2 s \langle e^+(t), u(t) \rangle +
        \xi(t)^2,
    \]
    where $q_2$ is as in Definition \ref{def:qPi}.
\end{lem}
\begin{proof}
    Recall that, for $x \in \R^{2n}$,
\[
q_2(x) = 
\begin{cases} 
	x_n^2 + y_n^2 & \text{for } y_n \ge 0 \\
	x_n^2 	        & \text{for } y_n < 0.
\end{cases}
\]

Let $\pi_n:\R^{2n} \to \R^{2}$ be given by $\pi_n(x) = (x_n,y_n)$. We now calculate
\[
q_2 (se^+ + u) = \begin{cases} s^2 + \langle 2se^+ , u \rangle + \xi^2(t) + \eta^2(t)
    &\text{if } \pi_n((se^+ + u)(t)) \in \R^{2}_+\\
    s^2 \cos^2 (\pi t) + 2s \cos (\pi t) \xi(t)  + \xi^2(t) \quad &\text{if } \pi_n((se^+ + u)(t)) \in \R^{2}_-
\end{cases}
\]

If $t$ is such that $\pi_n(se^+(t) + u(t)) \in \R^2_+,$ the result follows
immediately. We consider then the case when $\pi_n(se^+(t) + u(t)) \in \R^{2}_-$.
Equivalently, this occurs when $s \sin(\pi t) + \eta(t) \le 0$.

We compute
\begin{align*}
s^2 \cos^2 (\pi t) + 2s \cos (\pi t)  \xi(t)   = & \begin{aligned}[t] &s^2 \cos^2 (\pi t) + 2s \cos (\pi t)  \xi(t) \\
& + 2s \sin (\pi t) \eta(t) - 2s \sin (\pi t) \eta(t) \end{aligned}\\
=& s^2 \cos^2 (\pi t) + \langle 2s e^+ , u \rangle - 2s \sin (\pi t) \eta(t)\\
=&s^2 (1 - \sin^2 (\pi t)) + \langle 2s e^+ , u \rangle - 2s \sin (\pi t) \eta(t)\\
=&s^2 + \langle 2s e^+ , u \rangle - s \sin (\pi t) \left( s \sin (\pi t) + 2 \eta(t) \right).
\end{align*}

Observe now that we have $s \sin(\pi t) + \eta(t) \le 0$, but $t \in [0,1]$ and
$s \ge 0$, so it follows that $\eta(t) \le -s \sin(\pi t) \le 0$. 
Thus, $s \sin( \pi t) + 2 \eta(t) \le 0$, and hence:
\begin{align*}
q_2(x) 
= &s^2 + \langle 2s e^+ , u \rangle - s \sin (\pi t) \left( s \sin (\pi t) + 2
\eta(t) \right) + \xi^2 \\
\ge  s^2 + 2s \langle e^+(t), u(t) \rangle + \xi(t)^2,
\end{align*}
proving the result.
\end{proof}

\begin{lem}
\label{lem:Integral inequality} For $\tau > 0$ and $x = x^- + x^0 + se^+\in \Sigma_\tau$
\begin{align*}
 \int_0^1 q_\Pi(x) \, dt \geq
 \int_0^1 q_\Pi(x^0)\, dt +
\int_0^1 q_\Pi(se^+)\, dt.
\end{align*}
\end{lem}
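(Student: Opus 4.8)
The plan is to expand all three functions $q_\Pi(x)$, $q_\Pi(x^0)$, and $q_\Pi(se^+)$ using the decomposition $q_\Pi = q_2(x_n,y_n) + q_{2n-2}(x_1,\dots,y_{n-1})$, integrate, and show that all the cross-terms between $x^0$, $x^-$ and $se^+$ that are not already accounted for on the right-hand side are either zero (by the pseudo-orthogonality relations) or have a favorable sign. The essential point is that $q_\Pi$ is a sum of squares except on $\{y_n<0\}$, where the $y_n^2$ contribution is dropped; dropping a nonnegative term can only decrease the left-hand side, so the inequality must come from the genuinely quadratic ($\ell^2$-type) structure and survive that truncation.

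First I would write $x = x^- + x^0 + se^+$ in Fourier coordinates, so $x^0 = a_0 \in \R^{n,k} = V_0\oplus V_1$ (the zero frequency), $se^+ = s\, e^{\pi J t} e_n$ (a single positive frequency supported in the $x_n,y_n$-plane), and $x^-$ a sum of strictly negative frequencies. I would treat the $q_{2n-2}$ part and the $q_2$ part separately. For the $q_{2n-2}$ part, note that $se^+$ has no component in the first $2n-2$ coordinates, so $q_{2n-2}(x) = q_{2n-2}(x^- + x^0)$; expanding the integral of this genuine positive-definite quadratic form and using Lemma~\ref{lem:Pseudo-orthogonality} (which kills the cross-frequency terms), the integral splits as $\int_0^1 q_{2n-2}(x^0)\,dt + \int_0^1 q_{2n-2}(x^-)\,dt \ge \int_0^1 q_{2n-2}(x^0)\,dt$, and $q_{2n-2}(se^+)=0$, so this piece is fine. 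For the $q_2$ part, I would use the region decomposition: on $\{y_n\ge 0\}$, $q_2(x_n,y_n) = x_n^2 + y_n^2$ is the full norm-square in the $x_n,y_n$-plane, and on $\{y_n<0\}$ it is only $x_n^2$, which is still $\le x_n^2+y_n^2$; so in all cases $q_2(x) \le (x_n)^2 + (y_n)^2$ with equality on $\{y_n \ge 0\}$ — but here the inequality points the wrong way, so instead I should directly bound $q_2$ from below. The correct move: write $(x_n,y_n) = (x^0_n, x^0_{y_n}) + (\text{rest})$ where ``rest'' comes from $x^-$ and $se^+$; since $x^0$ is constant in $t$, on the subset where the full square appears the cross-terms integrate against Lemma~\ref{lem:Pseudo-orthogonality} to zero, and where only $x_n^2$ appears we still retain control.

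The cleanest formulation, which I would pursue, is pointwise in $t$: for each $t$, $q_\Pi(x(t)) \ge q_\Pi(x^0 + se^+(t)) + 2\langle \nabla_{\!\text{relevant}} , x^-(t)\rangle$-type terms plus $q_\Pi$ of the negative part, but since $q_\Pi$ is only piecewise quadratic this needs care on $\{y_n<0\}$. An alternative and probably safer route is: use that $q_\Pi(x) \ge q_2'(x_n,y_n) + q_{2n-2}(x)$ where $q_2'(x_n,y_n) := x_n^2$ is a globally defined nonneg.\ quadratic form that is $\le q_2$ everywhere — no wait, that loses too much. Let me instead split the time integral over $A = \{t : y_n(t)\ge 0\}$ and $A^c$. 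On $A$, $q_2(x) = |{(x_n,y_n)}|^2$ and I expand fully; on $A^c$ I use $q_2(x) = x_n^2 \ge 0$. Then $\int_A |(x_n,y_n)|^2 + \int_{A^c} x_n^2 \ge \int_0^1 (\text{something})$; the honest inequality to target is that dropping $\int_{A^c} y_n^2$ is exactly the gap, and one checks $\int_0^1 q_\Pi(x^0)\,dt + \int_0^1 q_\Pi(se^+)\,dt$ has the matching structure because $q_\Pi(se^+) = q_2(s\cos\pi t, s\sin\pi t)$ already has the same region-dependence in $t$. I would then verify the cross-terms $\int_0^1 \langle x^0 + se^+, x^-\rangle\,dt$ vanish frequency-by-frequency via Lemma~\ref{lem:Pseudo-orthogonality} (zero frequency paired with negative frequencies, and frequency $+1$ paired with negative frequencies), and the cross-term $\int_0^1\langle x^0, se^+\rangle\,dt$ between the two retained pieces also vanishes on $A\cup A^c$ because... here is the subtlety: this cross-term does not obviously vanish on $A^c$ since the region is not symmetric.

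The main obstacle I anticipate is precisely this last point: the asymmetry of $q_\Pi$ under $y_n \mapsto -y_n$ means the usual orthogonality argument (which in Hofer--Zehnder works because $q_2$ is a genuine quadratic form) breaks, and I expect the proof must exploit that $e^+(t) = (\cos\pi t)e_n + (\sin\pi t)(Je_n)$ has $y_n$-component $s\sin\pi t$ which is $\ge 0$ exactly on $t\in[0,1]$ — so in fact $se^+(t) \in \bar\R^{2n}_+$ for all $t\in[0,1]$ when... no, $\sin\pi t \ge 0$ on $[0,1]$ always. That is the saving observation: on $[0,1]$, $\sin(\pi t)\ge 0$, so the $y_n$-coordinate of $se^+(t)$ is $s\sin\pi t$, and the only contribution to $y_n<0$ comes from $x^0$ and $x^-$. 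I would use this to argue that the region $A^c$ where truncation happens is ``small'' in the relevant sense and that on it the terms we drop are exactly the $y_n^2$ terms coming from $x^0_{y_n}$ and $x^-$, both already absent from or dominated by the right-hand side. I would structure the final write-up around: (1) Fourier expand; (2) handle $q_{2n-2}$ by straightforward orthogonality; (3) for $q_2$, split $[0,1] = A\sqcup A^c$, expand on $A$, bound below by $x_n^2\ge 0$ on $A^c$, and collect terms; (4) verify the only surviving cross-terms are nonnegative or zero, using $\sin\pi t\ge 0$ on $[0,1]$ and Lemma~\ref{lem:Pseudo-orthogonality}.
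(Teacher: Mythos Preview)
Your high-level plan matches the paper's: decompose $q_\Pi = q_{2n-2} + q_2$, dispatch $q_{2n-2}$ by $L^2$-orthogonality of the Fourier modes, and for $q_2$ exploit that $s\sin(\pi t) \geq 0$ on $[0,1]$. You have correctly located the crucial observation. The gap is in your steps (3)--(4) for $q_2$. If ``bound below by $x_n^2\ge 0$ on $A^c$'' means discarding the $A^c$ contribution entirely, that is too crude: when $x^-$ is large, $A^c$ can be most of $[0,1]$ and you lose the $(x_n^0)^2$ term. If instead you keep $x_n^2$ on $A^c$ and expand, you run into exactly the subtlety you yourself flag: Lemma~\ref{lem:Pseudo-orthogonality} is an identity over the \emph{full} interval $[0,1]$ and says nothing about cross-terms integrated over the $x$-dependent subsets $A$ and $A^c$. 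Your step (4) asserts these cross-terms are ``nonnegative or zero'' but supplies no mechanism; the pseudo-orthogonality lemma alone cannot do it.

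The paper closes this gap by avoiding the time-splitting and establishing a \emph{pointwise} lower bound instead. Writing the $(x_n,y_n)$-components of $u \coloneqq x^0+x^-$ as $(\xi,\eta)$, a direct computation gives, for every $t\in[0,1]$,
\[
q_2\bigl(se^+(t)+u(t)\bigr) \;\ge\; s^2 + 2\langle se^+(t),u(t)\rangle + \xi(t)^2 .
\]
On $\{y_n\ge 0\}$ this is just the expansion of $|se^++u|^2$ with $\eta^2\ge 0$ dropped. On $\{y_n<0\}$ one rewrites $s^2\cos^2(\pi t)+2s\cos(\pi t)\,\xi$ and is left with a remainder $h(t)=-s\sin(\pi t)\bigl(s\sin(\pi t)+2\eta(t)\bigr)$; since $y_n<0$ forces $\eta\le -s\sin(\pi t)\le 0$, one gets $s\sin(\pi t)+2\eta\le 0$ and hence $h(t)\ge 0$. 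This is precisely where your sign observation is used. Now integrate the pointwise inequality over all of $[0,1]$: the term $\int_0^1\langle se^+,u\rangle\,dt$ vanishes by honest full-interval orthogonality, and $\int_0^1\xi^2\,dt \ge (x_n^0)^2$ by the same. The idea missing from your sketch is this pointwise reorganization (equivalently, the bound $y_n^2 \le \eta^2$ on $A^c$), which defers every orthogonality argument to the full interval where Lemma~\ref{lem:Pseudo-orthogonality} actually applies.
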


\begin{proof}

Recall that 
$q_\Pi(x) = q_2(x) + q_{2n-2}(x)$,
where 
\[
    q_{2n-2}(x) = \frac{1}{N^2}\sum_{i=k+1}^{n-1}\left( x_i^2 + y_i^2 \right) + 
\frac{2}{N^2}\sum_{i=1}^{k} (x_i^2 + y_i^2)\]
and $q_2$ is as in Definition \ref{def:qPi}.

If $x_1$ and $x_2$ are in orthogonal subspaces of $L^2([0,1],\R^{2n})$\[
 \int_0^1 \langle x_1(t),  x_2(t)\rangle \, dt = 0,
\]
it follows that 
\begin{equation} \label{eqn:qn estimate}
 \int_0^1 q_{2n-2}(x) \, dt = \int_0^1 q_{2n-2}(x^-) \, dt 
+ \int_0^1 q_{2n-2}(x^0) \,dt 
+ \int_0^1 q_{2n-2}(x^+) \, dt.
\end{equation}

Now, consider a smooth element $x$ of $L^2_{n,k}([0,1])$ of the form $x = x^-+x^0+se^+$,
with $s \ge 0$, and $x^- \in X^-, x^0 \in X^0$. 
Let $\xi^-(t)$ be the projection of $x^-(t)$ to the $x_n$ coordinate,
and similarly let $\xi^0$ be the projection of $x^0$. Then, $\xi(t) = \xi^-(t) +
\xi^0$ is the projection of $x^-(t)+x^0$. Note that by Lemma \ref{lem:Fourier
decomposition}, we have $\xi^0 = a_0e_n$ and 
\[
    \xi^-(t) = \sum_{k < 0} a_k \cos( k \pi t),
\]
where the real constants $a_0$, $a_k, k < 0$ are obtained as the projections to
$e_n$ of the terms $z_k$ as given in Lemma \ref{lem:Fourier decomposition}.

By Lemma \ref{lem:pointwise inequality} and using the fact that $x^-+x^0$
is orthogonal to $e^+$, we have
\begin{align*}
    \int_0^1 q_{2}(x) \, dt &\geq \int s^2 + 2 s \langle e^+, x^-+x^0 \rangle + \xi^2
    \, dt\\
    &= \int_0^1 q_2(s e^+) \, dt + \int_{0}^1 \xi^2 \, dt.
\end{align*}

Now, we observe that $q_2(x^0) = (\xi^0)^2$, since $x^0 \in V_0 \cap V_1$, and therefore
\begin{align*}
    \int_0^1 \xi^2 \, dt &= \int_0^1 (\xi^0)^2 \, dt + \int_0^1 (\xi^-)^2 \, dt \\
    &\ge \int_0^1 (\xi^0)^2 \\
    &= \int_0^1 q_2(x^0) \, dt.
\end{align*}

It now follows that 
\begin{equation} \label{eqn:q2 estimate}
    \begin{aligned}
    \int_0^1 q_2(x) \, dt &\ge \int_0^1 q_2(s e^+) \, dt + \int_{0}^1 \xi^2 \,
    dt \\
     &\ge \int_0^1 q_2(s e^+) \, dt + \int_0^1 q_2(x^0) \, dt.
\end{aligned}
\end{equation}

Combining now the inequalities \eqref{eqn:qn estimate} and
\eqref{eqn:q2 estimate}, we obtain for smooth $x = x^- + x^0 + s e^+$:
\[
    \int_0^1 q_\Pi(x) \, dt \ge \int_0^1 q_\Pi(se^+) \, dt + \int_0^1 q_\Pi(
    x^0) \, dt.
\]
It now follows by continuity for all $x = x^- + x^0 + se^+ \in L^2_{n,k}$.

\end{proof}

\begin{lem}
\label{lem:Linking 1}
There exists a $\tau^* > 0$ such that for $\tau > \tau^*$,
 \begin{equation*}
  \Phi_{\bar{H}}|_{\partial\Sigma_{\tau}} \leq 0.
 \end{equation*}
\end{lem}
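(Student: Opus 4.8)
Throughout, $\Phi = \Phi_{\bar{H}} = a - b$ on $X = H^{1/2}_{n,k}$. The plan is to bound $\Phi$ from above on $\partial\Sigma_\tau$ by splitting a point $x = x^- + x^0 + s e^+ \in \Sigma_\tau$ into its three orthogonal components and estimating the contribution of each, using the linear growth of $\bar{H}$. First I would invoke the bilinearity of $a$ and Lemma~\ref{lem:a}: the Fourier modes of $s e^+$, $x^0$, $x^-$ are supported at the disjoint frequencies $\{1\}$, $\{0\}$, $\{k < 0\}$, so the cross terms of $a$ vanish and $a(x^0) = 0$; hence $a(x) = a(s e^+) + a(x^-) = \tfrac{\pi}{2} s^2 - \tfrac{1}{2}\|x^-\|^2$, where $a(s e^+) = \tfrac{\pi}{2} s^2$ is read off from Lemma~\ref{lem:a} for the single mode $s e^{\pi J t} e_n$. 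For $b$ I would first record that $\bar{H}(z) \ge (\tfrac{\pi}{2} + \epsilon) q_\Pi(z) - C$ for all $z \in \R^{2n}$, with, say, $C = \tfrac{\pi}{2} + \epsilon$: on $\{q_\Pi > 1\}$ this holds because $\bar{H} = f(q_\Pi) \ge (\tfrac{\pi}{2} + \epsilon) q_\Pi$, and on $\{q_\Pi \le 1\}$ because $\bar{H} = H \ge 0$ while $(\tfrac{\pi}{2} + \epsilon) q_\Pi - C \le 0$ there. Integrating and applying Lemma~\ref{lem:Integral inequality} gives $b(x) \ge (\tfrac{\pi}{2} + \epsilon)\bigl(q_\Pi(x^0) + \int_0^1 q_\Pi(s e^+)\, dt\bigr) - C$.

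Next I would evaluate the two remaining quantities. As $x^0 \in X^0 = \R^{n,k}$ is a constant whose $y_n$-coordinate is zero (since $k \le n-1$), one checks directly from the definition of $q$ that $q_\Pi(x^0) = q(x^0) \ge \tfrac{1}{N^2}\|x^0\|^2$. And since the $y_n$-coordinate of $s e^+(t)$ equals $s \sin(\pi t) \ge 0$ for $t \in [0,1]$, the curve $s e^+(t)$ stays in $\bar{\R}^{2n}_{+}$, where $q_\Pi = q$, so $q_\Pi(s e^+(t)) = s^2$ and $\int_0^1 q_\Pi(s e^+)\, dt = s^2$. Feeding these back, the decisive cancellation appears in the $e^+$-term, $a(s e^+) - (\tfrac{\pi}{2} + \epsilon) s^2 = -\epsilon s^2 \le 0$, and putting everything together gives $\Phi(x) \le -\epsilon s^2 - \tfrac{1}{2}\|x^-\|^2 - \tfrac{1}{N^2}(\tfrac{\pi}{2} + \epsilon)\|x^0\|^2 + C \le -\delta\|x^- + x^0\|^2 + C$, where $\delta := \min\{\tfrac{1}{2},\, \tfrac{1}{N^2}(\tfrac{\pi}{2} + \epsilon)\} > 0$ and we used the orthogonality $\|x^-\|^2 + \|x^0\|^2 = \|x^- + x^0\|^2$.

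It then remains to run through the two faces of $\partial\Sigma_\tau$. On $\{\|x^- + x^0\| = \tau\}$ this bound reads $\Phi(x) \le -\delta\tau^2 + C$, which is $\le 0$ as soon as $\tau \ge \tau^* := \sqrt{C/\delta}$; on $\{|s| = \kappa\}$ it reads $\Phi(x) \le -\epsilon\kappa^2 + C$, which is $\le 0$ because $\kappa$ is fixed from the outset large enough that $\epsilon\kappa^2 \ge C$. The step I expect to be delicate is precisely the exact cancellation in the $e^+$-term: the coefficient $\tfrac{\pi}{2} + \epsilon$ that is hard-wired into the extension $f$ must \emph{strictly} dominate the coefficient $\tfrac{\pi}{2}$ coming from $a(s e^+)$, which forces one to verify that $\int_0^1 q_\Pi(s e^+)\, dt$ is exactly $s^2$ (and not something smaller). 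This is where the \emph{broken} quadratic $q_\Pi$ --- which equals $x_n^2$, not $x_n^2 + y_n^2$, on $\R^{2n}_{-}$ --- enters, and is the reason the relevant $e^+$-directions must lie in $\bar{\R}^{2n}_{+}$; it is also the point at which the capacity value $\pi/2$ gets pinned down.
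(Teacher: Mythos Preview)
Your argument is correct and matches the paper's: both use the lower bound $\bar H\ge(\tfrac{\pi}{2}+\epsilon)q_\Pi-C$, apply Lemma~\ref{lem:Integral inequality} to split off $\int_0^1 q_\Pi(se^+)\,dt=s^2$ and $q_\Pi(x^0)$, and obtain the decisive $-\epsilon s^2$ from $\|e^+\|^2=\pi$ against $(\tfrac{\pi}{2}+\epsilon)\cdot 1$. You are if anything slightly more explicit than the paper in recording $q_\Pi(x^0)\ge\tfrac{1}{N^2}|x^0|^2$ and in treating the two faces of $\partial\Sigma_\tau$ separately.
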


\begin{proof}
 First, recall that $\Phi_{\bar{H}}(x) = a(x) + b(x)$. Since $b
 \leq 0$ and 
$a |_{ X^{-}\oplus X^{0}} \leq 0$ 
we have that $\Phi_{\bar{H}} |_{ X^{-} \oplus X^{0} }\leq 0$. We now
need to examine $\Phi_{\bar{H}}$ on the boundary regions, where either $\| x^- + x^0 \| =
\tau$ or $s = \tau$. We note that by the construction of
$\bar{H}$ above, there exists a constant $C>0$ such that
\begin{equation*}
\bar{H}(z) \geq \left(\frac{\pi}{2} + \epsilon \right )q_\Pi(z) - C \quad \forall z \in \R^{2n}.
\end{equation*}
Therefore,
\[
 \Phi_{\bar{H}}(x) \leq a(x) - \left(\frac{\pi}{2} + \epsilon\right)\int_0^1 q_\Pi(x(t)) \, dt + C \quad \forall x \in X.
\]
We now estimate $\Phi_{\bar{H}}(x)$ for $x(t) = x^-(t) + x^0 + se^+(t)$ with $s
\ge 0$. 
Note that by Lemma \ref{lem:Fourier decomposition}, $x^0 \in \R^{2n}_+$.  
Lemma \ref{lem:Integral inequality} gives
\begin{align*}
 \Phi_{\bar{H}}(x^- + &x^0 + se^+) \\
                      & \leq a(x^-+x^2+se^+) - 
 \left (\frac{\pi}{2} + \epsilon \right )\int_{0}^{1} q_\Pi(se^+(t)) + q_\Pi(x^0)\,
 dt + C\\
 \intertext{Using now Definition \ref{def:a} and Remark \ref{rem:derivative of
 a}:}
 & \leq  
 s^2\|e^+\|^2 - \|x^-\|^2 - \left (\frac{\pi}{2} + \epsilon \right )\int_{0}^{1}
 q_\Pi(se^+(t)) + q_\Pi(x^0)\, dt
 + C\\
 &= \ C 
 + s^2 \| e_+ \|^2 -\|x^-\|^2 - \left ( \frac{\pi}{2} + \epsilon \right) q_\Pi(x^0) 
 - s^2\left (\frac{\pi}{2} +\epsilon \right ) \int_{0}^{1} q_\Pi(e^+(t)) \, dt. 
\end{align*}
Recalling the definition of the norm from Definition \ref{def:X Hilbert space
with norm},
$|| e^+ ||^2 = \frac{\pi}{2}$, $\int_0^1 q_\Pi(e^+) dt = 1$, and $q_\Pi(x^0)
= \|x^0\|^2$, it follows that
\[
\Phi_{\bar{H}}(x^- + x^0 + se^+) \leq 
	C -  \Vert x^-\Vert^2 - \left ( \frac{\pi}{2} + \epsilon \right) \Vert x^0\Vert^2 - \epsilon s^2,
\]
and thus there is a $\tau >0$, such that $\Phi_{\bar{H}}(x)|_{\partial\Sigma_{\tau}} \leq 0$.
\end{proof}

\begin{lem}
\label{lem:Linking 2} There exists $\alpha$ and $\beta$ such that  
$\Phi_{\bar{H}}|_{\Gamma_{\alpha}} \geq \beta
> 0$
\end{lem}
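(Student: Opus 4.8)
The plan is to use that $\Gamma_\alpha$ is a small sphere inside $X^+$, where the action term $a$ is positive definite, so that on $\Gamma_\alpha$ the functional $\Phi$ is a positive definite quadratic minus a higher-order perturbation coming from $\bar H$. First I would invoke Lemma~\ref{lem:H zero at origin} to reduce to the case in which $H$, and hence $\bar H$, vanishes identically on a ball $\{\,|z|\le\delta\,\}$ about the origin, with $\delta$ small enough that this ball is contained in $\{q_\Pi\le 1\}$ (so that $\bar H=H$ there). For $x\in\Gamma_\alpha\subset X^+$ one has $x^-=x^0=0$, so $a(x)=\tfrac12\|x\|^2$ and therefore
\[
\Phi(x)=a(x)-\int_0^1\bar H(x(t))\,dt=\tfrac12\alpha^2-\int_0^1\bar H(x(t))\,dt .
\]

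Next I would bound the Hamiltonian term cubically. By construction of $\bar H$ there is a constant $M>0$ with $0\le\bar H(z)\le M|z|^2$ for all $z\in\R^{2n}$ — this is the pointwise bound already used in the proof of Lemma~\ref{lem:Palais-Smale}, and after the normalization above it is valid down to $z=0$. Since $\bar H$ vanishes on $\{|z|\le\delta\}$ and, for $|z|>\delta$, $M|z|^2\le\tfrac{M}{\delta}|z|^3$, we get $\bar H(z)\le\tfrac{M}{\delta}|z|^3$ for every $z$, hence
\[
\int_0^1\bar H(x(t))\,dt\le\frac{M}{\delta}\int_0^1|x(t)|^3\,dt=\frac{M}{\delta}\,\|x\|_{L^3}^3 .
\]
The one-dimensional Sobolev embedding $H^{1/2}\hookrightarrow L^q$ (valid for every finite $q$, in particular $q=3$), restricted to the closed subspace $H^{1/2}_{n,k}$, gives a constant $c>0$ with $\|x\|_{L^3}\le c\,\|x\|_{H^{1/2}}$. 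Writing $C\ceq\tfrac{M}{\delta}c^3$ we obtain
\[
\Phi(x)\ \ge\ \tfrac12\alpha^2-C\alpha^3\ =\ \alpha^2\Bigl(\tfrac12-C\alpha\Bigr)\qquad\text{for all }x\in\Gamma_\alpha .
\]
Finally I would fix the radius: for any $0<\alpha<\tfrac{1}{2C}$, say $\alpha\ceq\tfrac{1}{4C}$, the right-hand side is at least $\tfrac14\alpha^2>0$, so the lemma holds with $\beta\ceq\tfrac14\alpha^2$. Note that the estimate in fact holds for every sufficiently small $\alpha>0$, which is convenient for the subsequent intersection ($\phi^t(\Sigma)\cap\Gamma\neq\emptyset$) step.

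The step I expect to require the most care is the reduction at the very beginning: without knowing that $\bar H$ vanishes near the origin, the term $\int_0^1\bar H(x(t))\,dt$ need not be small for small $x\in X^+$ — because $H^{1/2}$ does not embed in $C^0$, a path with tiny $H^{1/2}$-norm can still spend almost all of its time far from $0$ — and then neither the cubic estimate on $\bar H$ nor the conclusion survives. This is precisely the role of Lemma~\ref{lem:H zero at origin}, and the only place where the coisotropic setting enters is in checking that the normalizing Hamiltonian diffeomorphism can be chosen to preserve $U^{n,k}$; granting that, the remaining estimates are routine, exactly as in \cite{Hofer_Zehnder_1994}.
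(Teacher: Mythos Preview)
Your proof is correct and follows essentially the same route as the paper's: invoke Lemma~\ref{lem:H zero at origin} to arrange that $\bar H$ vanishes near $0$, derive a global cubic bound $\bar H(z)\le K|z|^3$, apply the Sobolev embedding $H^{1/2}\hookrightarrow L^3$, and conclude $\Phi(x)\ge\tfrac12\alpha^2-C\alpha^3$ on $\Gamma_\alpha$. The only cosmetic difference is that the paper phrases the cubic bound via Taylor's theorem (using that $\bar H$ and its first two derivatives vanish at the origin), whereas you obtain it from the vanishing on a ball together with the quadratic growth; both arguments are equivalent and your discussion of why the normalization is genuinely needed is a nice addition.
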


\begin{proof}
The proof proceeds exactly as in \cite{Hofer_Zehnder_1994}, Section 3.4, Lemma 9. As they observe, this lemma follows from the Sobolev inequality $\Vert u \Vert_{L^3} \leq C \Vert u \Vert_{1/2}$. Since $\bar{H}$ vanishes at the origin, Taylor's theorem and the fact that $\bar{H}$ is quadratic at infinity implies that we may find a constant $K >0$ such that $\vert \bar{H} \vert \leq K \vert x \vert^3$, and therefore
\[
\Phi_{\bar{H}}(x) \geq \frac{1}{2}\Vert x^+ \Vert^2 - \frac{1}{2}\Vert x^- \Vert^2 - CK\Vert x \Vert^3.
\]
For $x \in X^+$ with $\Vert x \Vert$ sufficiently small, the result follows.
\end{proof}

\begin{lem}
\label{lem:Linking 3}
 $\phi^{t}(\Sigma_\tau) \cap \Gamma_\alpha \neq \emptyset$, for all $t \geq 0$.
\end{lem}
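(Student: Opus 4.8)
The plan is to use a topological linking (intersection) argument between the finite‑dimensional ``disk'' $\Sigma_\tau$ and the ``sphere'' $\Gamma_\alpha$ inside the infinite‑dimensional Hilbert space $X$, in the spirit of \cite{Hofer_Zehnder_1994}*{Section 3.3}. The key structural input is the flow representation from Lemma \ref{lem:FlowRepresentation}: for every $t \ge 0$,
\[
\phi^{t}(x) = e^{t}x^{-} + x^{0} + e^{-t}x^{+} + K(t,x),
\]
with $K$ continuous and carrying bounded sets into precompact sets. The point of this decomposition is that the leading part is an explicit linear isomorphism that expands $X^{-}$, fixes $X^{0}$, and contracts $X^{+}$, while the compact remainder $K$ can be handled by a degree‑theoretic argument. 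Note that $\Sigma_\tau$ is a compact subset of the finite‑dimensional space $X^- \oplus X^0 \oplus \R e^+$ (here we use that $X^0$ is finite dimensional, being $\R^{n,k}$), so $\phi^t(\Sigma_\tau)$ is a compact set on which $K(t,\cdot)$ is automatically defined and continuous.

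The key steps, in order. First, fix $t \ge 0$ and consider the map
\[
G_t \colon \Sigma_\tau \to X, \qquad G_t(x) = \phi^t(x),
\]
and the finite‑dimensional projection
\[
\pi \colon X \to X^{-} \oplus X^{0} \oplus \R e^{+}, \qquad \pi = P^{-} + P^{0} + \langle \cdot, e^{+}/\|e^{+}\|\rangle\, e^{+}/\|e^{+}\|.
\]
A point $y \in \phi^t(\Sigma_\tau) \cap \Gamma_\alpha$ is a point $y = \phi^t(x)$ with $y \in X^+$ and $\|y\| = \alpha$; I will instead find $x \in \Sigma_\tau$ such that $\pi(\phi^t(x)) = 0$ and $\|P^+\phi^t(x)\| = \alpha$, which forces $\phi^t(x) \in X^+$ with norm $\alpha$. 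This reduces the problem to a zero‑finding problem for a map from a finite‑dimensional ball‑like region into a finite‑dimensional space of the same dimension, to which one applies a Brouwer degree / topological‑degree argument. Second, I write out $\pi(\phi^t(x))$ for $x = x^- + x^0 + s e^+ \in \Sigma_\tau$ using the flow representation: the ``principal part'' contributes $e^t x^- + x^0 + e^{-t} s\, e^+$ (after suitably normalising $e^+$), and the correction is $\pi K(t,x)$, which is a compact perturbation. Third, I set up the homotopy that deforms the full map to its principal linear part, checking that no zeros occur on the boundary $\partial \Sigma_\tau$ during the homotopy: this is exactly where the size of $\kappa$ and $\tau$, the sign structure of $a$ on the three subspaces, and the earlier linking estimates (Lemmas \ref{lem:Linking 1} and \ref{lem:Linking 2}) must be invoked — in particular one needs $\alpha < \kappa \|e^+\|$ (so that $\Gamma_\alpha$ is ``reachable'' in the $e^+$ direction) and $\alpha$ small, $\tau$ large, and one uses that $\Phi \le 0$ on $\partial \Sigma_\tau$ while $\Phi \ge \beta > 0$ on $\Gamma_\alpha$ to guarantee that an intersection cannot escape through the boundary. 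Computing the degree of the principal linear map: on $X^- \oplus X^0$ it is an expansion/identity (degree $+1$), and on the $e^+$ line the contraction still has nonzero degree, so the total degree is $\pm 1 \ne 0$, giving a zero of the full map in the interior of $\Sigma_\tau$, hence the desired intersection point.

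The main obstacle I expect is the boundary analysis during the homotopy: one must ensure that throughout the deformation from $\phi^t$ to its linear principal part, no solution of the zero‑finding equation lies on $\partial \Sigma_\tau$, and this is genuinely where the relative (coisotropic) setting could differ from the classical Hofer–Zehnder argument — in their setting the analogous estimate uses $\Phi < 0$ on $\partial \Sigma$ and $\Phi > 0$ on $\Gamma$, but here one must also keep track of the half‑space subtleties coming from $q_2$ and the sign of $h(t)$ that appeared in Lemma \ref{lem:Integral inequality}, as well as the fact that $e^+(t) = e^{\pi J t}e_n$ genuinely enters the region $y_n < 0$. Concretely, one checks: (i) if $\|x^- + x^0\| = \tau$ and $\pi \phi^t(x) = 0$ then, projecting onto $X^- \oplus X^0$, $\|e^t x^- + x^0\|$ would have to be cancelled by $\|P^{X^-\oplus X^0}K(t,x)\|$, which is bounded independently of $\tau$, a contradiction for $\tau$ large; and (ii) if $|s| = \kappa$ and $\pi \phi^t(x) = 0$ and $\|P^+ \phi^t(x)\| = \alpha$, then $\phi^t(x) \in \Gamma_\alpha$ but also one shows (using positive invariance of $\Sigma_\tau$ up to the boundary and the action estimates) that $\phi^t(x)$ cannot simultaneously lie on $\Gamma_\alpha$ with the correct norm — this is the delicate point. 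Once these boundary exclusions are in place, the degree computation is routine, and the lemma follows. I would present the argument following \cite{Hofer_Zehnder_1994}*{pp. 89--92} closely, pausing only to justify that the coisotropic modifications (the altered $X^0$, the half‑space quadratic form, and the extension $\bar H$) do not affect the topology of the intersection argument.
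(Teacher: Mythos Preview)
Your overall strategy --- a degree-theoretic linking argument using the flow representation of Lemma \ref{lem:FlowRepresentation} --- is the same as the paper's, but there is a genuine gap in your setup: the space $X^{-}\oplus X^{0}\oplus\R e^{+}$ is \emph{not} finite-dimensional. The subspace $X^{-}$ consists of all negative Fourier modes and is infinite-dimensional, so $\Sigma_{\tau}$ is bounded but not compact, and Brouwer degree is unavailable. The correct tool (and the one the paper uses) is the Leray--Schauder degree for maps of the form $\Id+(\text{compact})$ on the infinite-dimensional space $F=X^{-}\oplus X^{0}\oplus\R e^{+}$: one rewrites the condition $\phi^{t}(x)\in\Gamma_{\alpha}$ as $x+B(t,x)=0$ with $B(t,\cdot)$ compact (this is where the compactness of $K(t,\cdot)$ from Lemma \ref{lem:FlowRepresentation} and the finite rank of $P^{0}$ enter), and then runs the homotopy and degree computation as you outline. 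Your reformulation via $\pi$ is also slightly off: requiring $\pi(\phi^{t}(x))=0$ kills the $e^{+}$-component of $\phi^{t}(x)$ as well, which is strictly stronger than $\phi^{t}(x)\in X^{+}$; the paper instead packages $(P^{-}+P^{0})\phi^{t}(x)=0$ together with the scalar condition $\|\phi^{t}(x)\|=\alpha$ into a single equation on $F$.

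Your boundary analysis is also more complicated than necessary, and the argument you sketch for case (i) does not work as stated: $K(t,\cdot)$ is compact, but its bound on $\Sigma_{\tau}$ may grow with $\tau$, so a size comparison alone does not give the contradiction. The clean argument --- and the one the paper uses --- is purely via the action: $\Phi$ is nonincreasing along the negative gradient flow $\phi^{t}$, so Lemma \ref{lem:Linking 1} gives $\Phi(\phi^{t}(x))\le\Phi(x)\le 0$ for every $x\in\partial\Sigma_{\tau}$ and $t\ge 0$, while Lemma \ref{lem:Linking 2} gives $\Phi\ge\beta>0$ on $\Gamma_{\alpha}$. Hence $\phi^{t}(\partial\Sigma_{\tau})\cap\Gamma_{\alpha}=\emptyset$ for all $t\ge 0$, and no solution of the degree equation lies on $\partial\Sigma_{\tau}$. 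The half-space and $q_{2}$ subtleties you flag were already absorbed into Lemmas \ref{lem:Integral inequality} and \ref{lem:Linking 1}; they play no further role in the present lemma, which is purely topological once those estimates are in hand.
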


\begin{proof}
The proof of this lemma proceeds as in \cite{Hofer_Zehnder_1994}, Section 3.4, Lemma 10, which we summarize here. We use the Leray-Schauder degree to show the existence of an element in
$\phi^{t}(\Sigma) \cap \Gamma$. 
(See Deimling \cite{Deimling_1985}, Theorem 8.2 or Zeidler \cite{Zeidler_1986}, Chapter 12, for properties of the Leray-Schauder degree.) 
Let $F$ denote the space $X^{-} + X^{0} + \R e^{+}$. Using the expression in Lemma \ref{lem:FlowRepresentation}, we will rewrite the condition
\begin{equation}
\label{eq:Condition 4}
\phi^{t}(\Sigma_{\tau}) \cap \Gamma_{\alpha} \neq \emptyset
\end{equation} 
in the form $x + B(t,x) = 0$ for the operator $B:\R \times F \to F$ defined by
\[
B(t,x) \ceq (e^{-t}P^{-} + P^{0})K(t,x) + P^{+}\left((\Vert \phi^{t}(x) \Vert - \alpha)e^{+}
-x \right).
\] 
We remark that $B$ is continuous and maps bounded sets into relatively compact sets by Lemma \ref{lem:FlowRepresentation}.
We now recall that, since $x\in \Sigma_{\tau}$, 
$x=x^{-} + x^{0} + se^{+}$, for some $0 \leq s \leq \tau$, so the system of Equations
\ref{eq:Condition 4} is equivalent to
\begin{equation}
\label{eq:L-Sh}
\begin{split}
0 =&\, x + B(t,x)\\
x \in&\, \Sigma_\tau.
\end{split}
\end{equation}

Let $I$ denote the identity operator. By the Leray-Schauder degree theory, for any
fixed $t\geq 0$, Equation \ref{eq:L-Sh} has a solution $x\in \Sigma_{\tau}$ if
\begin{equation*}
\text{deg}(\Sigma_{\tau}, I + B(t,\cdot),0) \neq 0.
\end{equation*}

Since, by Lemmas \ref{lem:Linking 1} and \ref{lem:Linking 2}, 
$\phi^{t}(\partial \Sigma_{\tau}) \cap \Gamma =
\emptyset$ for $t \geq 0$, there is no solution of Equation \ref{eq:L-Sh} on the
boundary
$\partial \Sigma_{\tau}$. Therefore, since the Leray-Schauder degree is homotopy
invariant, we have
\begin{equation*}
\text{deg}(\Sigma_{\tau}, I +B(t,\cdot),0) = \text{deg}(\Sigma_{\tau},I +
B(0,\cdot),0).
\end{equation*}
We see that $K(0,x) = 0$, so $B(0,x) = P^{+}\left((\Vert x \Vert -\alpha)e^{+}
-x\right)$.
We define $h:[0,1] \times X \to X^{+}$ by
\begin{equation*}
h(\mu,x) = P^{+}\left((\mu \|x\| -\alpha ) e^{+} - \mu x \right),
\end{equation*}
and we claim that $x + h(\mu,x) \neq 0$ for $x\in \partial \Sigma_{\tau}$.

To see this, note first that if $x\in \Sigma_{\tau}$ solves $x + h(\mu,x)= 0$
then $x = se^{+}$, so $s((1-\mu) + \mu\| e^{+} \|) = \alpha$. 
Therefore, $0 < s \leq
\alpha$, so $x\notin \partial \Sigma_{\tau}$ if $\tau > \alpha$, which is true by
hypothesis. Furthermore, since $\tau > \alpha$, $\alpha e^{+} \in 
\Sigma_{\tau}$,
so by homotopy,
\begin{align*}
\deg(\Sigma_{\tau},I + B(t,\cdot),0) &= \deg(\Sigma_{\tau},I + h(0,\cdot),0) \\
&= \deg(\Sigma_{\tau},I - \alpha e^{+},0)\\
&= \deg(\Sigma_{\tau},I,\alpha e^{+}) \\
&= 1.
\end{align*}
This completes the proof.
\end{proof}

We now proceed with the proof of Proposition \ref{prop:Existence of an orbit}.

\begin{proof}[Proof of Proposition \ref{prop:Existence of an orbit}]
Let $\alpha$ be such that $\Sigma_{\tau}$ and $\Gamma_{\alpha}$ satisfy the
hypotheses of Lemmas \ref{lem:Linking 1} and \ref{lem:Linking 2}. 
Let $\mathcal{U}$ be the union
\begin{equation*}
\mathcal{U} \ceq \bigcup_{t \geq 0} \phi^t(\Sigma_{\tau}),
\end{equation*}
and define 
\begin{equation*}
c(\Phi_{\bar{H}},\mathcal{U}) \ceq \inf_{t\geq 0} \sup_{x \in \phi^t{\Sigma_{\tau}}}
\Phi_{\bar{H}}(x).
\end{equation*}\
We wish to apply the Minimax Lemma to $\Phi_{\bar{H}}$ and
$c(\Phi_{\bar{H}},\mathcal{U})$.

We first check that $c(\Phi_{\bar{H}},\mathcal{U})$ is finite. Since, by Lemmas \ref{lem:Linking 1}, \ref{lem:Linking 2}, and
the hypothesis on $\alpha$, we have $\phi^t(\Sigma_{\tau}) \cap \Gamma_{\alpha}
\neq \emptyset$ and $\Phi_{\bar{H}}|_{\Gamma_{\alpha}} \geq \beta$, we have
\begin{equation}
\label{eq:Ineq 1}
\beta \leq \inf_{x \in \Gamma_\alpha} \Phi_{\bar{H}}(x) \leq \sup_{x \in 
\phi^t(\Sigma_{\tau})}
\Phi_{\bar{H}}(x).
\end{equation}
By Lemma \ref{lem:b}, $\Phi_{\bar{H}}$ maps bounded sets into bounded sets. Therefore, for each
$t\geq 0$,
\begin{equation}
\label{eq:Ineq 2}
\sup_{x\in \phi^t(\Sigma_{\tau})} \Phi_{\bar{H}}(x) < \infty.
\end{equation}
Combining the inequalitites \ref{eq:Ineq 1} and \ref{eq:Ineq 2} we see that for
every $t\geq 0$,
\begin{equation*}
-\infty < \beta < \sup_{x \in \phi^t(\Sigma_{\tau})} \Phi_{\bar{H}}(x) < \infty
\end{equation*}
and therefore $-\infty < c(\Phi_{\bar{H}},\mathcal{U}) < \infty$.
By Lemma \ref{lem:Palais-Smale}, $\Phi_{\bar{H}}$ satisfies the Palais-Smale condition, 
and by Lemma \ref{lem:Global flow},
the equation $\dot{x} = \nabla \Phi_{\bar{H}}(x)$ generates a global flow, from which it follows that 
$\phi^t(\mathcal{U}) \subseteq \mathcal{U}$. By the Minimax Lemma,
$c(\Phi_{\bar{H}},\mathcal{U})$ is a critical value. There is therefore a point $x^* \in
X$
with $\nabla \Phi_{\bar{H}}(x^*) = 0$ and $\Phi_{\bar{H}}(x^*) = 
c(\Phi_{\bar{H}},\mathcal{U}) \geq \beta
> 0$, which completes the proof.
\end{proof}

Theorem \ref{thm:Upper bound} now follows immediately.

\section{Existence of chords near an energy surface}
\label{sec:Almost existence}

We give here a dynamical consequence of our constructions: that 
the existence of the capacity ${c}$ proven in Theorem \ref{thm:Capacity} implies the existence of Hamiltonian chords on a large family of energy surfaces.

\begin{defn}
Let $H \colon M \to \R$ be a Hamiltonian function on the symplectic manifold
$(M,\omega)$ and $\lambda \in \R$. We call  $S = H^{-1}(\lambda)$ a \emph{regular
energy surface with energy $\lambda$} if $dH(x) \neq 0$ for $x\in S$. 
\end{defn}

\ThmAlmostExistence

\begin{proof}
The proof follows closely the proof of Theorem 1 in Chapter 4 of \cite{Hofer_Zehnder_1994}. 
The new ingredient here comes from the fact that the admissible Hamiltonians in
the coisotropic setting require that trajectories either be constant or have
positive return time (i.e.~ruling out trajectories that have tangencies to the
isotropic leaves). This will be dealt with by Lemma \ref{lem:Transversality and
admissibility} below.

Denote level sets by $S_\lambda = H^{-1}(\lambda)$. Since $S_1 \subset U$, and since
transversality is an open condition, there
exists a $\rho > 0$ such that for every energy $\lambda \in (1-\rho, 1+\rho)$, 
$S_\lambda \subset U$ and $S_\lambda$ is transverse to $N$.

By shrinking $U$ as necessary, we may assume $U = H^{-1}(1-\rho, 1+\rho)$. 
Monotonicity of the capacity gives that the smaller $U$ also has finite capacity.

We will construct an auxiliary Hamiltonian function $F$ on $U$ which is constant on every surface $S_\lambda$ contained in $U$. 
Choose $\epsilon$ in $(0,\rho)$, and let $f:\R \to \R$ be a smooth function such that
\begin{align*}
& f(s) = {c}(U,N,\omega, \sim) + 1 & \text{ for } & s \leq 1 - \epsilon \text{ and } s \geq 1 + \epsilon \\
& f(s) = 0  & \text{ for } & 1 - \frac{\epsilon}{2} \leq s \leq 1 + \frac{\epsilon}{2} \\
& f'(s) < 0 & \text{ for } & 1 - \epsilon < s < 1 - \frac{\epsilon}{2} \\
& f'(s) > 0 &  \text{ for } & 1 + \frac{\epsilon}{2} < s < 1 + \epsilon.
\end{align*}
Define $F \colon U \to \R$ by $F(x) \colonequals f\left(H(x)\right)$ for $x \in
U$, and extend $F$ to $F \colon M \to \R$ by defining $F(x) \colonequals
{c}(U,N,\omega, \sim)+1$ for $x\in M\backslash U$.

Observe that this function $F$ is therefore simple (see Definition \ref{def:simple}). 
The maximum of $F$, $m(F) > {c}(U, N, \omega, \sim)$, so $F$ cannot be 
admissible. The failure of admissibility either gives the existence of a short
leafwise chord of $F$ or there is a non-constant trajectory that fails to leave its isotropic leaf. 
We use the following lemma to rule out the latter case:
\begin{lem}
\label{lem:Transversality and admissibility}
Let $N \subset M$ be a coisotropic submanifold and $H \colon M \to \R$ be a function.
If $x \in N$ satisfies that $T_x N + \ker dH_x = T_x M$, then if $X_H(x)$ is tangent to 
the isotropic leaf through $x$, then $X_H(x) = 0$.

\end{lem}

\begin{proof}
Let $K$ denote the isotropic leaf through $x$. If $X_H(x) \in T_x K$, we then
have for any $v \in T_x N$, 
\[
0 = \omega( X_H(x) , v) = -dH(x)\cdot v.
\]
By definition, we also have $\omega(X_H(x), v) = 0$ for all $v \in \ker dH$. 
By hypothesis, $T_x M = \ker dH_x + T_x N$, so $\omega(X_H(x), v) = 0$ for all $v \in T_x M$,
hence $X_H(x) = 0$. 
\end{proof}

To conclude the proof, we recall that, by assumption, $N \pitchfork S_\lambda$ for every $S_\lambda \subset U$, so at each $x \in N \cap U$, we have $T_x N + \ker dH_x = T_x M$. 
By the construction of $F$, we have $dF_x = f'(H(x)) dH_x$ so 
$\ker dH_x \subset \ker dF_x$, and thus the hypotheses of the lemma are verified for $F$. 
It then follows that $X_F(x)$ either vanishes or points out of the isotropic leaf.

The remainder of the proof now proceeds as in \cite{Hofer_Zehnder_1994}. We
include it here for the convenience of the reader. Since $m(F) > {c}(U,N,\omega,
\sim)$, there exists a nonconstant leafwise chord $x(t)$
with return time $0 < T \leq 1$ which is a solution of the Hamiltonian system $\dot{x}(t) = X_F(x(t))$. 
Since $F = f(H)$, we have
\begin{equation*}
X_F(x) = f'(H(x))(X_H(x)) \ .
\end{equation*}
Also, note that, for a solution $x(t)$ of the Hamiltonian equation, $H(x(t)) = \lambda$ is constant in $t$, since 
\begin{equation*}
\frac{d}{dt}H(x(t))=dH(x(t))\cdot\dot{x}(t) = f'(H)\omega(X_H,X_H) = 0.
\end{equation*}
Since $x(t)$ is non-constant we must have 
\begin{equation*}
f'(H(x(t))) = f'(\lambda) \neq 0.
\end{equation*}
From the definition of $f$, we see that $\lambda \in (1-\epsilon,1-\frac{\epsilon}{2}) \cup (1+\frac{\epsilon}{2},1+\epsilon)$. Let $\tau \colonequals f'(\lambda)$. Reparametrizing, we define $y:\R \to S_\lambda$ by $y(t)\colonequals x(\frac{t}{\tau})$. This curve has period $\tau T$ and satisfies the equation
\begin{equation*}
\bar{y}(t) = \frac{1}{\tau}\bar{x}(t)=X_H(y(t)),
\end{equation*} 
and is therefore a solution of the original Hamiltonian equation on the energy surface $S_\lambda$. Since $\epsilon$ is arbitrary, we have shown that there exists a sequence $\lambda_j \to \alpha$ of energy levels such that there is a leafwise chord on each $S_j$. However, the same argument proves this for any $\lambda \in I$. Therefore, the theorem is proved. 
\end{proof}

\begin{rem*}

This theorem only guarantees the existence of leafwise chords near a given
energy level and says nothing about the energy level itself. However,
if we add the assumption that the return times $T_j$ of the solutions
$x_j(t)$ on each $S_{\lambda_j}$ are uniformly bounded, and that $S$
and each $S_{\lambda_j}$ are compact, then a standard Arzel\`{a}-Ascoli
argument together with Lemma \ref{lem:Transversality and admissibility} (which
prevents the resulting limit from being contained in a leaf) allows us to conclude:
\begin{prop}
Let $(M,\omega)$ be a symplectic manifold, $N\hookrightarrow M$ be a coisotropic submanifold. Let $H:M \to \R$ be a Hamiltonian function with Hamiltonian vector field $X_H$, and suppose there is an energy level $S_\alpha$ which is compact and such that $N\pitchfork S_\alpha$. Furthermore, let $\lambda_j \to \alpha$ and assume that the return times $T_j$ of the leafwise Hamiltonian chords $x_j(t)$ are bounded by some $\beta>0$ and that the $S_{\lambda_j}$ are compact. Then $S = S_\alpha$ admits a leafwise Hamiltonian chord which is a solution of the equation $\bar{x}(t) = X_H(x(t))$.
\qed
\end{prop}
\end{rem*}

Similarly, applying Lemma \ref{lem:Transversality and admissibility} 
to obtain compactness for non-trivial chords of bounded length, 
we may adapt many
results proving the existence of periodic orbits on energy surfaces
to our context of chords on coisotropic submanifolds. We finish by
stating two such results here on the existence of leafwise Hamiltonian
chords on energy surfaces transverse to a coisotropic submanifold $N$
of $(M, \omega)$. The proofs are modifications of the proofs of Theorems 3 and 4 in
\cite{Hofer_Zehnder_1994}*{Chapter 4}, using Lemma \ref{lem:Transversality
and admissibility} and the same strategy as in the proof of Theorem
\ref{thm:Almost existence}. We omit them here.

Before stating the next theorem, we recall two definitions from
\cite{Hofer_Zehnder_1994}. First, \emph{a parametrized family of
hypersurfaces based on $S$} is a diffeomorphism $\psi:S \times I \to U
\subset M$, where $I$ is an open interval containing $0$, $U$ is bounded,
and $\psi(x,0) = x$ for all $x\in S$.

Now suppose that each hypersurface $S_\epsilon$ in a parametrized family of hypersurfaces based on $S$ bound a symplectic manifold $U_\epsilon$. We say that $S_\epsilon$ is \emph{of ${c}$-Lipschitz type} if there are positive constants $L$ and $a$ such that 
\begin{equation*}
{c}(U_\epsilon,N,\omega, \sim) < {c}(U_{\epsilon^*},N,\omega, \sim) + L(\epsilon - \epsilon^*)
\end{equation*}
for all $\epsilon^* < \epsilon < \epsilon^* + L(\epsilon - \epsilon^*)$.

When $S$ is a hypersurface as above, and $N$ is a coisotropic submanifold such
that $S$ and $N$ intersect transversally, we write 
$\mathcal{C}(S, N)$ to denote the set of leafwise Hamiltonian chords on $S$
for any Hamiltonian that has $S$ as a regular level set.

\begin{thm}
Let $N \hookrightarrow M$ be a coisotropic submanifold of $(M,\omega)$, and
suppose that $c(M,N,\omega, \sim) < \infty$. Let $S \hookrightarrow M$ be a
compact hypersurface that intersects $N$ transversally and which bounds a
compact symplectic submanifold of $M$. If $S$ is of $c_0$-Lipschitz type, then
$\mathcal{C}(S, N) \neq \emptyset$. \qed
\end{thm}

\begin{thm}
Let $N \hookrightarrow M$ be a coisotropic submanifold of $(M,\omega)$, and suppose that $c(M,N,\omega,\sim) < \infty$. Suppose the compact hypersurface $S \hookrightarrow M$ bounds a compact symplectic manifold. Let $S_\epsilon$, with $\epsilon \in I$ be a parametrized family of hypersurfaces modelled on $S$, with $S_\epsilon$ transverse to $N$ for each $\epsilon \in I$. Then
\begin{equation*}
\mu \left\{ \epsilon \in I \, \vert \, \mathcal{C}(S_\epsilon, N) \neq \emptyset
\right\} = \mu(I),
\end{equation*}
where $\mu$ denotes the Lebesgue measure on $\R$.
\qed
\end{thm}

\acknowledgement{ 
The second author would like to thank the Laboratoire Jean Leray of the
Universit\'e de Nantes and the D\'epartement de Math\'ematiques
of the Universit\'e Libre de Bruxelles for their hospitality and the pleasant 
atmosphere
during his visits to work on this project, and the Institut Math\'ematiques de 
Toulouse at the Universit\'e Paul Sabatier for the invitation to give a seminar 
talk on an early version of these results. 

Both authors are grateful to 
Sobhan Seyfaddini, R\'emi Leclercq, 
Vincent Humili\`ere, Matthew Strom Borman, Leonid Polterovich, Felix Schlenk, 
Kaoru Ono, Yoshihiro Sugimoto, 
and Emmy Murphy  
for helpful feedback and interesting and useful discussions. 

We also thank the referee for very detailed feedback for improvement.
}

\bibliographystyle{amsplain}

\begin{bibdiv}
\begin{biblist}

\bib{Albers_Frauenfelder_2010_TandA}{article}{
      author={Albers, Peter},
      author={Frauenfelder, Urs},
       title={Leaf-wise intersections and {R}abinowitz {F}loer homology},
        date={2010},
        ISSN={1793-5253},
     journal={J. Topol. Anal.},
      volume={2},
      number={1},
       pages={77\ndash 98},
         url={https://doi-org.umiss.idm.oclc.org/10.1142/S1793525310000276},
      review={\MR{2646990}},
}

\bib{Albers_Momin_2010}{article}{
      author={Albers, Peter},
      author={Momin, Al},
       title={Cup-length estimates for leaf-wise intersections},
        date={2010},
        ISSN={0305-0041},
     journal={Math. Proc. Cambridge Philos. Soc.},
      volume={149},
      number={3},
       pages={539\ndash 551},
         url={https://doi-org.umiss.idm.oclc.org/10.1017/S0305004110000435},
      review={\MR{2726731}},
}

\bib{Barraud_Cornea_2007}{article}{
      author={Barraud, Jean-Fran\c{c}ois},
      author={Cornea, Octav},
       title={Lagrangian intersections and the {S}erre spectral sequence},
        date={2007},
        ISSN={0003-486X},
     journal={Ann. of Math. (2)},
      volume={166},
      number={3},
       pages={657\ndash 722},
         url={https://doi.org/10.4007/annals.2007.166.657},
      review={\MR{2373371}},
}

\bib{biran_cornea_2008}{incollection}{
      author={Biran, Paul},
      author={Cornea, Octav},
       title={A {L}agrangian quantum homology},
        date={2009},
   booktitle={New perspectives and challenges in symplectic field theory},
      series={CRM Proc. Lecture Notes},
      volume={49},
   publisher={Amer. Math. Soc., Providence, RI},
       pages={1\ndash 44},
      review={\MR{2555932}},
}

\bib{Biran_Cornea_2009}{article}{
      author={Biran, Paul},
      author={Cornea, Octav},
       title={Rigidity and uniruling for {L}agrangian submanifolds},
        date={2009},
        ISSN={1465-3060},
     journal={Geom. Topol.},
      volume={13},
      number={5},
       pages={2881\ndash 2989},
         url={https://doi.org/10.2140/gt.2009.13.2881},
      review={\MR{2546618}},
}

\bib{Borman_McLean_2014}{article}{
      author={Borman, Matthew~Strom},
      author={McLean, Mark},
       title={Bounding {L}agrangian widths via geodesic paths},
        date={2014},
        ISSN={0010-437X},
     journal={Compos. Math.},
      volume={150},
      number={12},
       pages={2143\ndash 2183},
         url={http://dx.doi.org/10.1112/S0010437X14007465},
      review={\MR{3292298}},
}

\bib{Buhovsky_2010}{article}{
      author={Buhovsky, Lev},
       title={A maximal relative symplectic packing construction},
        date={2010},
        ISSN={1527-5256},
     journal={J. Symplectic Geom.},
      volume={8},
      number={1},
       pages={67\ndash 72},
         url={http://projecteuclid.org/euclid.jsg/1271166375},
      review={\MR{2609629}},
}

\bib{Casals_Spacil_2016}{article}{
      author={Casals, Roger},
      author={Sp{\'a}{\v c}il, Old{\v r}ich},
       title={Chern-{W}eil theory and the group of strict contactomorphisms},
        date={2016},
        ISSN={1793-5253},
     journal={J. Topol. Anal.},
      volume={8},
      number={1},
       pages={59\ndash 87},
         url={http://dx.doi.org.umiss.idm.oclc.org/10.1142/S1793525316500035},
      review={\MR{3463246}},
}

\bib{Cieliebak_Hofer_Latschev_Schlenk_2007}{incollection}{
      author={Cieliebak, Kai},
      author={Hofer, Helmut},
      author={Latschev, Janko},
      author={Schlenk, Felix},
       title={Quantitative symplectic geometry},
        date={2007},
   booktitle={Dynamics, ergodic theory, and geometry},
      series={Math. Sci. Res. Inst. Publ.},
      volume={54},
   publisher={Cambridge Univ. Press, Cambridge},
       pages={1\ndash 44},
         url={https://doi.org/10.1017/CBO9780511755187.002},
      review={\MR{2369441}},
}

\bib{Deimling_1985}{book}{
      author={Deimling, Klaus},
       title={Nonlinear functional analysis},
   publisher={Springer-Verlag, Berlin},
        date={1985},
        ISBN={3-540-13928-1},
         url={https://doi.org/10.1007/978-3-662-00547-7},
      review={\MR{787404}},
}

\bib{Rizell_2013}{article}{
      author={Dimitroglou~Rizell, Georgios},
       title={Exact {L}agrangian caps and non-uniruled {L}agrangian
  submanifolds},
        date={2015},
        ISSN={0004-2080},
     journal={Ark. Mat.},
      volume={53},
      number={1},
       pages={37\ndash 64},
         url={https://doi-org.umiss.idm.oclc.org/10.1007/s11512-014-0202-y},
      review={\MR{3319613}},
}

\bib{Dragnev_2008}{article}{
      author={Dragnev, Dragomir~L.},
       title={Symplectic rigidity, symplectic fixed points, and global
  perturbations of {H}amiltonian systems},
        date={2008},
        ISSN={0010-3640},
     journal={Comm. Pure Appl. Math.},
      volume={61},
      number={3},
       pages={346\ndash 370},
         url={https://doi-org.umiss.idm.oclc.org/10.1002/cpa.20203},
      review={\MR{2376845}},
}

\bib{Ekeland_Hofer_1989}{article}{
      author={Ekeland, Ivar},
      author={Hofer, Helmut},
       title={Symplectic topology and {H}amiltonian dynamics},
        date={1989},
        ISSN={0025-5874},
     journal={Math. Z.},
      volume={200},
      number={3},
       pages={355\ndash 378},
         url={http://dx.doi.org/10.1007/BF01215653},
      review={\MR{978597 (90a:58046)}},
}

\bib{Ekholm_etal_2013}{article}{
      author={Ekholm, Tobias},
      author={Eliashberg, Yakov},
      author={Murphy, Emmy},
      author={Smith, Ivan},
       title={Constructing exact {L}agrangian immersions with few double
  points},
        date={2013},
        ISSN={1016-443X},
     journal={Geom. Funct. Anal.},
      volume={23},
      number={6},
       pages={1772\ndash 1803},
         url={http://dx.doi.org.umiss.idm.oclc.org/10.1007/s00039-013-0243-6},
      review={\MR{3132903}},
}

\bib{Ginzburg_2007}{article}{
      author={Ginzburg, Viktor~L.},
       title={Coisotropic intersections},
        date={2007},
        ISSN={0012-7094},
     journal={Duke Math. J.},
      volume={140},
      number={1},
       pages={111\ndash 163},
  url={https://doi-org.umiss.idm.oclc.org/10.1215/S0012-7094-07-14014-6},
      review={\MR{2355069}},
}

\bib{Gromov_1985}{article}{
      author={Gromov, Mikhail},
       title={Pseudo holomorphic curves in symplectic manifolds},
        date={1985},
        ISSN={0020-9910},
     journal={Invent. Math.},
      volume={82},
      number={2},
       pages={307\ndash 347},
         url={https://doi.org/10.1007/BF01388806},
      review={\MR{809718}},
}

\bib{Gurel_2010}{article}{
      author={G\"{u}rel, Ba\c{s}ak~Zehra},
       title={Leafwise coisotropic intersections},
        date={2010},
        ISSN={1073-7928},
     journal={Int. Math. Res. Not. IMRN},
      number={5},
       pages={914\ndash 931},
         url={https://doi-org.umiss.idm.oclc.org/10.1093/imrn/rnp164},
      review={\MR{2595016}},
}

\bib{Hofer_1990}{article}{
      author={Hofer, Helmut},
       title={On the topological properties of symplectic maps},
        date={1990},
        ISSN={0308-2105},
     journal={Proc. Roy. Soc. Edinburgh Sect. A},
      volume={115},
      number={1-2},
       pages={25\ndash 38},
         url={https://doi-org.umiss.idm.oclc.org/10.1017/S0308210500024549},
      review={\MR{1059642}},
}

\bib{Hofer_Zehnder_1994}{book}{
      author={Hofer, Helmut},
      author={Zehnder, Eduard},
       title={Symplectic invariants and {H}amiltonian dynamics},
      series={Birkh\"{a}user Advanced Texts: Basler Lehrb\"{u}cher.
  [Birkh\"{a}user Advanced Texts: Basel Textbooks]},
   publisher={Birkh\"{a}user Verlag, Basel},
        date={1994},
        ISBN={3-7643-5066-0},
         url={https://doi.org/10.1007/978-3-0348-8540-9},
      review={\MR{1306732}},
}

\bib{Kang_2013}{article}{
      author={Kang, Jungsoo},
       title={Generalized {R}abinowitz {F}loer homology and coisotropic
  intersections},
        date={2013},
        ISSN={1073-7928},
     journal={Int. Math. Res. Not. IMRN},
      number={10},
       pages={2271\ndash 2322},
         url={https://doi-org.umiss.idm.oclc.org/10.1093/imrn/rns113},
      review={\MR{3061940}},
}

\bib{Katok_1973}{article}{
      author={Katok, Anatole~B.},
       title={Ergodic perturbations of degenerate integrable {H}amiltonian
  systems},
        date={1973},
        ISSN={0373-2436},
     journal={Izv. Akad. Nauk SSSR Ser. Mat.},
      volume={37},
       pages={539\ndash 576},
      review={\MR{0331425}},
}

\bib{Moser_1978}{article}{
      author={Moser, J{\" u}rgen},
       title={A fixed point theorem in symplectic geometry},
        date={1978},
        ISSN={0001-5962},
     journal={Acta Math.},
      volume={141},
      number={1--2},
       pages={17\ndash 34},
         url={https://doi-org.umiss.idm.oclc.org/10.1007/BF02545741},
      review={\MR{0478228}},
}

\bib{Rieser_2014}{article}{
      author={Rieser, Antonio},
       title={Lagrangian blow-ups, blow-downs, and applications to real
  packing},
        date={2014},
        ISSN={1527-5256},
     journal={J. Symplectic Geom.},
      volume={12},
      number={4},
       pages={725\ndash 789},
         url={http://projecteuclid.org/euclid.jsg/1433196062},
      review={\MR{3333028}},
}

\bib{Schlenk_2005}{article}{
      author={Schlenk, Felix},
       title={Packing symplectic manifolds by hand},
        date={2005},
        ISSN={1527-5256},
     journal={J. Symplectic Geom.},
      volume={3},
      number={3},
       pages={313\ndash 340},
         url={http://projecteuclid.org/euclid.jsg/1144954876},
      review={\MR{2198779}},
}

\bib{Usher_2011}{article}{
      author={Usher, Michael},
       title={Boundary depth in {F}loer theory and its applications to
  {H}amiltonian dynamics and coisotropic submanifolds},
        date={2011},
        ISSN={0021-2172},
     journal={Israel J. Math.},
      volume={184},
       pages={1\ndash 57},
         url={https://doi.org/10.1007/s11856-011-0058-9},
      review={\MR{2823968}},
}

\bib{Zehmisch_2012a}{article}{
      author={Zehmisch, Kai},
       title={The codisc radius capacity},
        date={2013},
        ISSN={1935-9179},
     journal={Electron. Res. Announc. Math. Sci.},
      volume={20},
       pages={77\ndash 96},
      review={\MR{3117101}},
}

\bib{Zehmisch_2012b}{article}{
      author={Zehmisch, Kai},
       title={Lagrangian non-squeezing and a geometric inequality},
        date={2014},
        ISSN={0025-5874},
     journal={Math. Z.},
      volume={277},
      number={1-2},
       pages={285\ndash 291},
         url={https://doi-org.umiss.idm.oclc.org/10.1007/s00209-013-1254-6},
      review={\MR{3205772}},
}

\bib{Zeidler_1986}{book}{
      author={Zeidler, Eberhard},
       title={Nonlinear functional analysis and its applications. {I}:
  Fixed-point theorems},
   publisher={Springer-Verlag, New York},
        date={1986},
        ISBN={0-387-90914-1},
         url={https://doi.org/10.1007/978-1-4612-4838-5},
        note={Translated from the German by Peter R. Wadsack},
      review={\MR{816732}},
}

\bib{Ziltener_2010}{article}{
      author={Ziltener, Fabian},
       title={Coisotropic submanifolds, leaf-wise fixed points, and
  presymplectic embeddings},
        date={2010},
        ISSN={1527-5256},
     journal={J. Symplectic Geom.},
      volume={8},
      number={1},
       pages={95\ndash 118},
  url={http://projecteuclid.org.umiss.idm.oclc.org/euclid.jsg/1271166377},
      review={\MR{2609631}},
}

\end{biblist}
\end{bibdiv}
\end{document}